\documentclass[final,hidelinks,onefignum,onetabnum]{siamart251216}

\usepackage{lipsum}
\usepackage{amsfonts}
\usepackage{graphicx}
\usepackage{epstopdf}
\usepackage{algorithmic}
\ifpdf
  \DeclareGraphicsExtensions{.eps,.pdf,.png,.jpg}
\else
  \DeclareGraphicsExtensions{.eps}
\fi

\newsiamremark{remark}{Remark}
\newsiamremark{hypothesis}{Hypothesis}
\crefname{hypothesis}{Hypothesis}{Hypotheses}
\newsiamthm{claim}{Claim}
\newsiamremark{fact}{Fact}
\crefname{fact}{Fact}{Facts}

\headers{Kernel Learning of PDE Solution Operators}{Jianyu Hu and Juan-Pablo Ortega}

\title{Kernel Learning of PDE Solution Operators}
\author{
Jianyu Hu\thanks{Division of Mathematical Sciences, School of Physical and Mathematical Sciences, Nanyang Technological University, Singapore (\email{jianyu.hu@ntu.edu.sg}, \email{juan-pablo.ortega@ntu.edu.sg}). This work was funded by the School of Physical and Mathematical Sciences, Nanyang Technological University.}
\and
Juan-Pablo Ortega\footnotemark[1]
}

\usepackage{amsopn}

\usepackage[show]{ed}
\usepackage{enumitem}

\newcommand{\bfi}{\bfseries\itshape}

\usepackage{xr}

\begin{document}

\maketitle

\begin{abstract}
 A kernel-based approach for the learning of the solution operator of general nonhomogeneous partial differential equations (PDEs) is proposed. The method incorporates physical priors, typically encoded through the PDE operator, into a kernel ridge regression framework, and employs a regularization-based formulation to construct an operator learner. 
This yields a closed-form estimator that is independent of the input functions that determine the underlying PDE. From the perspective of regularization theory, the resulting estimator induces a well-defined operator that links input and output spaces, which contain the functions that define a Dirichlet problem and its solution, respectively. Consequently, it effectively shifts from a PDE solver to an operator-based solver. In contrast to standard supervised learning methods, it does not rely on paired input--output training data and enables systematic extrapolation beyond observed regimes. 

A full error analysis is conducted, providing convergence rates for the operator-based solver under suitable choices of regularization parameters. Extensive numerical experiments, including Darcy flow and Helmholtz equations, demonstrate that the proposed method achieves high accuracy and efficiency across a range of problem settings, and compares favorably with operator learning approaches in both approximation quality and computational cost.
\end{abstract}

\begin{keywords}
Operator learning, kernel methods, RKHS, error bounds, Dirichlet problem
\end{keywords}

\begin{MSCcodes}
65N35, 65N15, 47B32, 47N20, 68T05
\end{MSCcodes}

\section{Introduction}

The mathematical models of many physical phenomena are formulated as initial and boundary value problems for partial differential equations (PDEs), and inverse problems governed by such equations arise naturally across nearly all areas of science and engineering. A central challenge in physics and science in general is therefore to develop methods capable of accurately solving a broad class of PDEs, and, crucially, at higher computational speed, since current approaches still largely rely on sophisticated numerical schemes tailored to specific problems. High computational speed is especially critical in engineering applications that require repeated evaluations of the solution operator, such as in design optimization.

In this paper, we are interested in developing machine learning methods for solving a broad class of linear PDEs. More specifically, consider a system whose state $u$ is the solution of a general inhomogeneous PDE \cite{hormander2007analysis} on a domain $D \subset \mathbb{R}^d$,
\begin{equation}\label{pde}
\begin{cases}
\mathcal{L}_s u = f, & \text{in } D,\\[0.3em]
\mathcal{B} u = g, & \text{on } \partial D.
\end{cases}
\end{equation}
Here, $D$ is not necessarily bounded, $\mathcal{L}_s$ denotes a differential linear operator of order $s$ (see \eqref{diff-operator}) and $\mathcal{B}$ encodes the boundary conditions (Dirichlet or Neumann problem), both of which are assumed to be known. 
\textit{We aim to learn the associated solution operator $\mathcal{T}:(f,g)\to u$, enabling fast evaluation of solutions for varying inputs while providing a theoretically grounded framework for operator approximation.}

\subsection{Literature review}

Classical numerical methods for PDEs~\eqref{pde}, such as the finite 
difference method~\cite{strikwerda2004finite,leveque2007finite}, finite 
element method~\cite{brenner2008mathematical,ciarlet2002finite}, and finite 
volume method~\cite{leveque2002finite}, discretize the domain $D$ onto a 
mesh of $N$ points and represent the forward model as a mapping 
$\mathbb{R}^N \to \mathbb{R}^N$, where the input vector encodes the source 
term $f$ and boundary data $g$ evaluated at those mesh points, and the output vector encodes the approximate solution $u$ at the same points. The differential operator $L_s$ is replaced by a large matrix, reducing the PDE to a finite-dimensional linear system. However, the approximation error depends sensitively on $N$, and achieving high accuracy requires $N$ 
to be large with the number of mesh points scaling as $N \sim h^{-d}$ for 
mesh spacing $h$ in $d$ spatial dimensions. This unfavorable scaling makes repeated evaluations of the forward model computationally prohibitive in engineering applications that require solving the PDE many times, such as in design optimization or uncertainty quantification.

Two major paradigms have emerged for approximating PDEs using scientific machine learning. 
The first one is based on an operator-learning \cite{boulle2024mathematical} perspective that seeks to estimate the mapping between function spaces that produces PDE solutions out of the input functions $f$ and $g$ that determine the PDE problem~\eqref{pde}. This approach uses input--output pairs as data, without explicitly encoding the underlying governing equations. Pioneering efforts along this line include DeepONet \cite{lu2021learning}, the Graph Neural Operator \cite{anandkumar2020neural}, and the Fourier Neural Operator \cite{kovachki2023neural}. Recent extensions further incorporate kernel-based approaches for learning Green functions \cite{stepaniants2023learning},  as well as random feature methods grounded in operator-valued reproducing kernel Hilbert spaces (RKHS)\cite{kadri2016operator,nelsen2021random,nelsen2024operator}. 
A common motivation underlying much of this literature is data-driven model discovery, rather than accelerating PDE solvers in settings where the governing physics is already well understood. In contrast, many practical engineering applications such as fluid dynamics and turbulent flow modeling are characterized by substantial prior physical knowledge. 
From this perspective, purely data-driven operator learning methods do not fully exploit the available structure; instead, they often treat the solution operator, namely the mapping from an input function space to an output function space, as a black box. 
To address this limitation, the physics-informed neural operator introduced in \cite{wang2021learning,li2024physics} incorporates physical constraints into the neural operator framework, thereby improving empirical accuracy.
Nevertheless, these approaches typically require large amounts of training data, and their performance is often highly sensitive to the choice of input sampling strategy. 
As a result, they rarely admit guarantees of stability or consistency, and thus cannot reliably support extrapolation beyond the regimes represented in the training data. 

The second direction adopts a more direct strategy by explicitly incorporating physical priors, such as the differential operator $\mathcal{L}_s$ and the boundary operator $\mathcal{B}$ in equation~\eqref{pde}, into the loss function. In this paradigm, the PDE solution field is first parameterized by a neural network, and the network parameters are then optimized by minimizing a loss functional based on the PDE residual, typically using variants of gradient descent or stochastic gradient descent.
Representative PDE solvers following this approach include 
artificial neural networks (ANNs) \cite{lagaris1998artificial,jianyu2003numerical},
physics-informed neural networks (PINNs) \cite{raissi2019physics,karniadakis2021physics,lu2021deepxde}, the deep Ritz method \cite{weinan2018deep,han2018solving}, the deep Galerkin method \cite{sirignano2018dgm}, and their variants \cite{karniadakis2021physics,cuomo2022scientific}. 
Although mesh-free compared to traditional methods, and unlike operator learning frameworks, these methods do not require paired data between infinite-dimensional input and output spaces, they remain highly problem-specific: any change to the initial-boundary-value problem formulation or PDE parameters necessitates expensive retraining of the neural network used to approximate the solution. This intrinsic limitation restricts the potential of neural networks as accelerators of traditional PDE solvers.

In contrast, kernel-based methods \cite{scholkopf2002learning,steinwart2008support} and Gaussian process regression \cite{williams2006gaussian,owhadi2019operator} offer a more straightforward, data-efficient, and easy to train alternative, and have proven to be both powerful and theoretically robust for modeling nonlinear phenomena. The classical kernel method approximates an unknown function by representing it as a linear combination of kernel sections at sampled data points within a reproducing kernel Hilbert space. 
When differential structures are incorporated, for example, in learning solutions of PDEs, the approach is often referred to as physics-informed kernel learning \cite{doumeche2024physics}. A common strategy is to model the solution in an appropriate Sobolev space and construct a corresponding RKHS equipped with a suitable kernel, thereby reformulating the problem within the classical kernel learning framework \cite{byun1994best,saitoh2004approximate,matsuura2004numerical}. This equivalence allows one to establish theoretical guarantees, such as convergence rates of the estimator toward the true PDE solution \cite{nickl2020convergence,doumeche2024physics}.
For a more comprehensive discussion, we refer to the classical book \cite{saitoh2016theory}.
This idea can also be extended to statistical learning with structured data, where observations are transformed through a general bounded linear operator \cite{schaback2006kernel}. In this framework, strong and weak minimax optimal convergence rates have been established for a broad class of spectral regularization methods over regularity classes defined via appropriate source conditions \cite{de2006discretization,blanchard2018optimal}.
Although this strategy enables rigorous analysis of error bounds, it is often inconvenient for practical computation, as it typically leads to kernels that are complicated or only implicitly defined.
For example, in \cite{doumeche2025physics}, the constructed kernel is characterized as the weak solution of an integro-differential PDE. Moreover, there is generally no guarantee that such structure-induced kernels are universal in the sense of \cite{micchelli2006universal}, and verifying such a property can be highly nontrivial. The universality of kernel functions plays a central role in the approximation abilities of the corresponding RKHS. The absence of universality may negatively affect both approximation accuracy and numerical performance, as discussed in \cite{nelsen2024operator}.

A more direct approach in practice is to model the PDE solution explicitly using a prescribed universal kernel, such as radial basis function \cite{franke1998solving,wendland2004scattered,fornberg2015solving} or Gaussian kernels \cite{graepel2003solving,owhadi2015bayesian,owhadi2017multigrid,swiler2020survey}. 
The advantage of this approach is that the kernel can be chosen conveniently for numerical applications, making the resulting computations straightforward to implement. These ideas have been extended to nonlinear and time-dependent PDEs \cite{raissi2018numerical,chen2021solving}. In particular, \cite{franke1998solving} derived Sobolev error estimates for generalized interpolation methods applied to linear differential operators with constant coefficients, which were later extended to the case of non-constant coefficients in \cite{giesl2007meshless}. Convergence of unsymmetric kernel-based meshless collocation methods for linear operator equations was analyzed in \cite{schaback2007convergence}, and stability conditions guaranteeing convergence of the resulting meshless collocation discretizations were subsequently established in \cite{ling2008stable}. Optimal $H^2(\Omega)$ convergence rates for least-squares formulations of the Kansa kernel collocation method for second-order elliptic PDEs were obtained in \cite{cheung2018h}. A general framework for stable discretizations of well-posed linear operator equations was later developed in \cite{schaback2016all}, showing that convergence rates are determined by the approximation properties of the trial spaces. 
More recently, \cite{batlle2025error} established Sobolev-space error estimates for Gaussian process and kernel-based methods applied to nonlinear and parametric PDEs, highlighting the role of solution regularity in mitigating the curse of dimensionality. For scattered data defined on embedded submanifolds, \cite{fuselier2012scattered} derived Sobolev error estimates for kernel interpolation obtained by restricting positive definite kernels from the ambient space.

In this paper, we adopt the RKHS framework and model the PDE solution in an RKHS. We seek the optimal element in this space by minimizing a physics-informed loss that incorporates the differential operator $\mathcal{L}_s$ and the boundary operator $\mathcal{B}$ from equation~\eqref{pde}, both of which are assumed to be known. We shall show that even in the presence of these physical priors in the loss function, the kernel-based solution for the operator learning problem can be obtained out of a Gramian-type linear regression similar to the one obtained out of the standard representer theorem. More specifically, inspired by recent developments in the learning of interaction potentials \cite{lu2019nonparametric,feng2024learning} and Hamiltonian functions \cite{RCSP2, RCSP3, hu2025kernel}, we provide an operator-theoretic framework for the learning problem that admits a closed-form expression for its unique solution, which coincides with the posterior mean estimator in physics-informed Gaussian process regression \cite{raissi2017machine,kanagawa2018gaussian}.
Importantly, the resulting estimator induces a kernel basis (see Section~\ref{From PDE Solvers to Solution Operators}) once the kernel and the sampling measure on $\overline{D}$ are specified. Consequently, the algorithm does not require retraining when the input functions $f$ and $g$ change, thereby enabling fast computation. This basis can be interpreted as an approximation of the Green operator associated with the PDE~\eqref{pde}. We emphasize that this approximation is obtained solely from samples in the finite-dimensional underlying spaces, and does not require paired samples of infinite-dimensional input--output functions as, for instance, in \cite{stepaniants2023learning}.
This approach is closely related to regularization methods \cite{tikhonov1963solution,tikhonov1977solutions,engl1996regularization,benning2018modern} that provide a regularized and computationally tractable formulation. Finally, we establish comprehensive error bounds for the reconstruction of the solution operator, which distinguishes our analysis from existing kernel-based approaches that primarily focus on approximating a single PDE solution. The code for all experiments is available at \url{https://github.com/jianyuhu/kernel-operator-pde}.

\subsection{Main results}

This paper proposes a kernel-based method for learning the solution operator $\mathcal{T}:(f,g)\mapsto u$ associated with the linear PDE~\eqref{pde}.  A key feature of this approach is that the solution of the learning problem is unique, admits a closed-form, and is obtained by solving a Gram-type linear regression problem. The method is based on data obtained from finite-dimensional function evaluations in the underlying domain using a prescribed universal kernel and directly incorporates the differential operators that define~\eqref{pde}. We emphasize that, unlike existing operator learning approaches no infinite-dimensional paired input--output functional training data is used. 

For notational simplicity, we restrict attention to Dirichlet boundary conditions, so that $\mathcal{B}=I$, and reformulate~\eqref{pde} as:
\begin{align}
\label{reformuted Ps}
P _s u=h, \quad\text{where}\quad P_s := \mathcal{L}_s \mathbf{1}_{ D} + I\mathbf{1}_{\partial  D}
\quad\text{and}\quad
h :=
\begin{cases}
f & \text{in }  D,\\
g & \text{on } \partial  D .
\end{cases}
\end{align}
Let $\mu$ be a sampling measure on $\overline{D}$, and let $\{X^{(i)}\}_{i=1}^N$ be i.i.d.\ samples drawn from $\mu$. 
Since the boundary $\partial D$ is a Borel set of measure zero, in practice (see Section~\ref{Numerical Experiments}) we sample points in the interior $D$ and on the boundary $\partial D$ separately (using two different measures). 
Within this framework, we approximate the solutions $u$ of the PDEs \eqref{pde} by solving the following regularized empirical risk minimization problem
\begin{align}
\widehat{u}_{\lambda,N}
&:= \arg\min_{u\in \mathcal{H}_K} \ \widehat{R}_{\lambda,N}(u), \label{emp-pro0}\\
\widehat{R}_{\lambda,N}(u)
&:= \frac{1}{N} \sum_{i=1}^N \left|P_s u(X^{(i)}) - h(X^{(i)})\right|^2 
 + \lambda \|u\|_{\mathcal{H}_K}^2, \label{emp-fun0}
\end{align}
where $\lambda>0$ is a Tikhonov regularization parameter and $\mathcal{H}_K$ is the reproducing kernel Hilbert space (RKHS) associated with a kernel 
$K:\overline{D}\times\overline{D}\to\mathbb{R}$. We now summarize the paper's outline and main contributions.
\begin{enumerate}[leftmargin=*]
\item  In Section~\ref{Methodology}, we develop a physics-informed kernel framework for learning the solution operator associated with the linear PDE~\eqref{pde}. 
In Subsection~\ref{Problem Formulation}, we reformulate the boundary value problem as a unified operator equation and cast it into a statistical learning framework, emphasizing the objective of learning the operator $\mathcal{T}:h\mapsto u$ rather than individual solutions. 
In Subsection~\ref{A Physics-informed Kernel Approach}, we exploit the differential reproducing property \cite{RCSP2,zhou2008derivative} to show that if the kernel function $K\in C_{b}^{2s+1}(\overline{D}\times\overline{D})$,then the operator $P_s : \mathcal{H}_K \to L^2(\mu)$ and its empirical counterpart $P_{s,N} : \mathcal{H}_K \to \mathbb{R}^N$ are bounded linear operators for any sampling measure $\mu$. This leads to closed-form operator expressions for both empirical and statistical minimizers. Furthermore, we establish a generalized kernel representer theorem that provides an explicit finite-dimensional characterization of the estimator via a generalized Gram regression. 
In Subsection~\ref{From PDE Solvers to Solution Operators}, we use the estimator $\widehat{u}_{\lambda,N}$ to construct a solution operator $\widehat{\mathcal{T}}_{\lambda,N}: L^2(\overline{D};\mu) \to \mathcal{H}_K$ that satisfies $\widehat{u}_{\lambda,N} = \widehat{\mathcal{T}}_{\lambda,N} h$. Importantly, $\widehat{\mathcal{T}}_{\lambda,N}$ admits an efficient practical implementation that enables fast evaluation for arbitrary inputs and reveals a connection to Green functions. 
Finally, we address scalability by introducing an online kernel regression scheme that supports efficient updates in large-scale settings.

\item In Section~\ref{Estimation and approximation error bounds}, we establish rigorous error bounds for the empirical operator estimator $\widehat{\mathcal{T}}_{\lambda,N}$ by decomposing the total reconstruction error into estimation and approximation components. We first derive high-probability bounds (Proposition~\ref{Sam-Err}) for the estimation error, showing that $\widehat{\mathcal{T}}_{\lambda,N}$ converges to the statistical operator $\mathcal{T}_{\lambda}$ (see~\eqref{statistical solution operator estimator}) at a rate governed by the sample size $N$ and the regularization parameter $\lambda$. These results are obtained in a pathwise sense, i.e., for each fixed input function $h$.
In Subsection~\ref{Uniform Convergence of Total Reconstruction Error}, we lift these pathwise guarantees to uniform convergence over function classes $\mathcal{F}_S^\gamma$ (see~\eqref{source space}). To this end, we develop an operator-valued concentration argument (Proposition~\ref{prop:uniform_estimation}) based on Bernstein's inequality for self-adjoint operators \cite{minsker2017some}, which enables uniform control of the error bounds over $\mathcal{F}_S^\gamma$.
By combining the uniform estimation and approximation bounds with a data-dependent choice $\lambda \sim N^{-\alpha}$, we obtain explicit convergence rates for the total reconstruction error. Specifically, for any $\alpha \in (0,\tfrac{1}{2})$ and $0 < \delta < 1$, with probability at least $1-\delta$, it holds uniformly that
\begin{align*}
\|\widehat{\mathcal{T}}_{\lambda,N} - \mathcal{T}\|_{\mathcal{F}_S^\gamma}
:= \sup_{h\in \mathcal{F}_S^\gamma} 
\|\widehat{\mathcal{T}}_{\lambda,N}h - \mathcal{T}h\|_{\mathcal{H}_K}
\;\le\; 
C(d,\gamma,\delta,S) N^{-\min\left\{\alpha\gamma, \tfrac{1}{2}(1-2\alpha)\right\}}.
\end{align*}
This establishes uniform convergence of the learned operator over the source space with high probability, providing a complete statistical characterization of the proposed operator learning framework.
\item   In Section~\ref{Numerical Experiments}, we validate the proposed method through a series of numerical studies. 
In Subsection~\ref{Darcy Flow: Formulation and Experiment}, we apply the method to the Darcy flow problem with various permeability fields, demonstrating consistently low approximation errors and fast computation across both smooth and heterogeneous settings, as well as robustness to discontinuous inputs. 
In Subsection~\ref{A Comparison with Neural Operator Learning}, we compare the proposed approach with the Green operator learning method introduced in \cite{stepaniants2023learning} on a high-frequency Helmholtz equation. The results show that the kernel-based operator significantly outperforms the Green operator approach in terms of accuracy, efficiency, and computational cost, while avoiding the generalization gap typically observed in data-driven operator learning methods. 
Overall, the experiments confirm that the proposed method provides a reliable, efficient, and highly generalizable framework for operator learning in PDEs. Section \ref{sec:conclusions} concludes the paper.
\end{enumerate}

\section{Methodology}\label{Methodology}
In this work, we consider the following linear Dirichlet boundary value problem stated in \eqref{pde} where $D\subseteq \mathbb{R}^d$ is a  domain,  $\mathcal{B}$ encodes the boundary conditions
(Dirichlet or Neumann problem), and $\mathcal{L}_s:H^s(D) \rightarrow L^2(D)$ is a linear differential operator defined as
\begin{align}\label{diff-operator}
\mathcal{L}_s u = \sum_{\alpha \in I_s} \varphi_{\alpha} \partial^{\alpha}u,\quad \alpha\in I_s, 
\end{align}
where $I_s:=\{\alpha\in\mathbb{N}^d:|\alpha|\leq s\}$ with $|\alpha|=\sum_{j=1}^d\alpha_j$ for $\alpha=(\alpha_1,\dots,\alpha_d)\in\mathbb{N}^d$, and $\varphi_\alpha:  D \rightarrow \mathbb{R}$ are functions such that $\max _{\alpha\in I_s}\left\|\varphi_\alpha\right\|_{\infty}<\infty$.  

The problem of interest is the approximation of the solution map $\mathcal{T} : (f,g) \mapsto u$, where $f$ and $g$ denote the input functions of the Dirichlet boundary value problem \eqref{pde}, and $u$ is the corresponding solution. 
A large body of existing work in operator learning aims to construct a surrogate operator $\widehat{\mathcal{T}}$ for the true solution map $\mathcal{T}$. 
These approaches are typically data-driven and non-intrusive, and are trained by minimizing a least-squares loss over paired data samples $\{(f_i, g_i, u_i)\}_{i=1}^n$; representative examples include \cite{kovachki2023neural,lu2021learning,nelsen2024operator}.
While such methods are particularly appealing in settings where an explicit model is unavailable, their computational effectiveness on unseen test inputs depends critically on the sampling strategy used for the input functions. 
Moreover, these approaches typically require extensive training, and their predictive accuracy is generally reliable only in neighborhoods of the sampled data. 

For this reason, we propose a novel machine learning approach for solving general nonhomogeneous PDEs of the type~\eqref{pde} for which we have access to physical prior information, namely, {\it the linear differential operator $\mathcal{L}_s$ and the boundary operator $\mathcal{B}$ are assumed to be known}.  
The proposed method achieves performance comparable to existing operator learning approaches, while requiring no training on paired data samples $\{(f_i, g_i, u_i)\}_{i=1}^n$. 
Moreover, it demonstrates superior accuracy, computational efficiency, and generalization capability compared to standard data-driven operator learning methods.

\subsection{Problem formulation}\label{Problem Formulation}
We restrict attention to Dirichlet boundary conditions in what follows, so that $\mathcal{B}=I$ is the identity operator. 
We first reformulate the boundary value problem \eqref{pde} on the closure $\overline{D}= D\cup\partial D$ as we did in \eqref{reformuted Ps} using the operator $P_s$ and the function $h$. Notice that by \eqref{diff-operator}, the operator $P_s : H^s(\overline{D}) \to L^2(\overline{D})$ is a linear differential operator of the form
\begin{align}\label{pde-operator}
P_s u
= \sum_{\alpha \in I_s} \phi_{\alpha}\, \partial^{\alpha} u, \qquad \alpha \in I_s,
\end{align}
where $\phi_{\alpha} = \mathbf{1}_{\partial  D}$ for $|\alpha| = 0$, and $\phi_{\alpha} = \varphi_{\alpha}\mathbf{1}_{ D}$ for $0 < |\alpha| \le s$.
With this construction, the boundary value problem \eqref{pde} is equivalently reformulated as
$P_s u = h$ in $\overline{D}$.
A standard approach to learning the solution $u $ of $P_s u = h$  is to define a loss function $l:\mathbb{R}\times\mathbb{R}\to [0,\infty)$ and minimize the statistical risk
\begin{align}\label{exp}
\min_{u\in\mathcal{U}} \; \mathbb{E}_{X\sim\mu}\, l\bigl(P_s u(X), h(X)\bigr),
\end{align}
where $\mathcal{U}$ denotes a hypothesis function space and $\mu$ is a sampling probability measure supported on $\overline{D}$. Note that the boundary $\partial D$ is a Borel set of measure zero. In practice (see Section~\ref{Numerical Experiments}), we sample in the interior $D$ and on the boundary $\partial D$ separately.

In practice, only a finite dataset $\{(X^{(i)}, Y^{(i)}=h(X^{(i)}))\}_{i=1}^N$ is available. Problem~\eqref{exp} is therefore approximated by replacing $\mu$ with the empirical measure $\mu^N=\frac{1}{N}\sum_{i=1}^N \delta_{X^{(i)}}$, which yields the empirical risk minimization problem
\begin{align}\label{emp}
\min_{u\in\mathcal{U}} \; \frac{1}{N} \sum_{i=1}^N l\bigl(P_s u(X^{(i)}), Y^{(i)}\bigr).
\end{align}

Solving~\eqref{emp} corresponds to estimating an approximation $\widehat{u}$ of the solution $u $ corresponding to a given input function $h$ of the PDE \eqref{pde}.
However, our ultimate objective is not merely to approximate a single solution, but to learn a solution operator $\widehat{\mathcal{T}}$ that produces $\widehat{u}$ out of $h$ and the finite dataset $\{(X^{(i)}, Y^{(i)}=h(X^{(i)}))\}_{i=1}^N$. Ideally, the estimation procedure should hence be independent of the specific choice of input function $h$. This poses a fundamental challenge. When using neural network-based methods, this goal cannot be achieved, as any change to the input function $h$ necessitates expensive retraining of the neural network used to approximate the solution.

In contrast, kernel-based methods admit closed-form solutions. As shown in Section \ref{A Physics-informed Kernel Approach} and Section \ref{From PDE Solvers to Solution Operators}, the resulting estimator is independent of the input function $h$ and consequently yields a well-defined solution operator, thereby shifting the paradigm from solving individual PDEs to learning an operator-based solver. 
Moreover, the proposed estimator is consistent with the posterior obtained via Gaussian process regression \cite{raissi2017machine,pfortner2022physics} for learning linear PDEs. To the best of our knowledge, this is the first work to explicitly realize such an estimator as a solution operator, thereby providing a theoretically grounded framework for operator approximation.

In what follows, we focus on the squared loss
\[
l(y_1,y_2) = \lvert y_1 - y_2 \rvert^2.
\]
We consider the hypothesis space $\mathcal{U}$ in the minimization problem \eqref{emp} to be a reproducing kernel Hilbert space (RKHS) $\mathcal{H}_K$ associated with a Mercer kernel $K:\overline{D}\times\overline{D}\to\mathbb{R}$. This choice is well justified, as one may select a universal kernel, such as the Gaussian kernel, for which the corresponding RKHS $\mathcal{H}_K$ is dense in the space of continuous functions $C(M)$ for any compact set $M\subset\overline{D}$ with respect to the uniform norm.

\subsection{A physics-informed kernel approach}
\label{A Physics-informed Kernel Approach}

The main idea behind the physics-informed kernel learning approach that we propose is modeling the solution $u$ in the RKHS $\mathcal{H}_K$ associated with a Mercer kernel $K:\overline{D}\times\overline{D}\to \mathbb{R}$ and to incorporate physical priors (typically encoded through PDE operator information) into a kernel ridge regression formulation. 

In order to make the method explicit, we shall be solving the following empirical minimization problem 
\begin{align}
\widehat{u}_{\lambda,N}&:=\mathop{\arg\min}\limits_{u\in \mathcal{H}_{K}} \ \widehat{R}_{\lambda,N}(u), \label{emp-pro}\\
\widehat{R}_{\lambda,N}(u)&:=\frac{1}{N}\sum_{i=1}^{N} \left| P_s u(X^{(i)})-Y^{(i)}\right|^2 + \lambda\|u\|_{\mathcal{H}_K}^2, \label{emp-fun}
\end{align}
where $P_s$ is the linear operator defined in \eqref{pde-operator}, $\{(X^{(i)}, Y^{(i)}=h(X^{(i)}))\}_{i=1}^N$ is the dataset, and $\lambda\geq0$ is the Tikhonov regularization parameter.
The functional $\widehat{R}_{\lambda,N}$ is referred to as the {\bfi regularized empirical risk}. 

The measure-theoretic analogue, referred to as {\bfi regularized statistical risk}, is denoted as $R_{\lambda}$ and is defined by
\begin{equation}
\label{exp-fun}
R_{\lambda}(u):=\|P_su- h\|_{L^2(\mu)}^2+\lambda \|u\|_{\mathcal{H}_K}^2, 
\end{equation}
where $\|\cdot\|_{L^2(\mu)}$ is the $L^2$ norm with respect to the sampling probability measure $\mu$. We denote by $u^{*}_{\lambda}\in\mathcal{H}_K$ the {\bfi best-in-class function} with the minimal associated in-class regularized statistical risk, that is,
\begin{align}
\label{exp-pro}
u^{*}_{\lambda}:= \mathop{\arg\min}\limits_{u\in \mathcal{H}_{K}}  R_{\lambda}(u) .  
\end{align}

\begin{remark}
\normalfont
If $K \in C_b^{2s+1}(\overline{D}\times\overline{D})$, then the differential reproducing property \cite{zhou2008derivative,RCSP2} implies that ${\mathcal H}_K \subset C_b^{s}(\overline{D})$, meaning that every function in ${\mathcal H}_K$ is at least $s$ times continuously differentiable. As a consequence, the regularized empirical risk $\widehat{R}_{\lambda,N}$ defined in~\eqref{emp-fun} is well defined under this condition. 
Furthermore, below in Proposition~\ref{Wel-Ope}, we show that $P_s u$ is $L^2(\mu)$-integrable for all $u \in {\mathcal H}_K$ and hence the regularized statistical risk $R_\lambda$ in \eqref{exp-fun} is well-defined.
\end{remark}

\subsubsection{An operator-theoretic formulation of the learning problem}
In this section, we propose an operator-theoretic framework to characterize the minimizers of the optimization problems~\eqref{emp-pro}–\eqref{emp-fun} and~\eqref{exp-fun}–\eqref{exp-pro}. More precisely, we exploit the differential reproducing property \cite{zhou2008derivative,RCSP2} to show that the linear differential operator $P_s$, defined in~\eqref{pde-operator}, is bounded. Moreover, we derive its adjoint operator with respect to the RKHS inner product, which admits an explicit representation in terms of the kernel function.
In contrast to classical regularization methods \cite{plato2018optimal,benning2018modern}, where the adjoint operator is typically defined in an $L^2$ sense, our perspective yields a fully computable representation of the estimator for the associated minimization problems by leveraging kernel ridge regression. We start by studying the properties of the linear operator $P_s$ in the following proposition. The proof follows the similar strategy as in \cite{RCSP2}, based on the differential reproducing property of the kernel. 

\begin{proposition}\label{Wel-Ope}
Given a pair $(K,\mu)$ with $K\in C_b^{2s+1}(\overline{D}\times\overline{D})$ being a Mercer kernel. Then, the operator $P_s$ defined in \eqref{diff-operator} is a bounded linear operator from $\mathcal{H}_K$ to $L^2(\overline{D};\mu)$ with operator norm satisfying $\|P_s\|\leq C_{d+s}^dC\kappa$, where $C_{d+s}^d=\binom{d+s}{d} = \frac{(d+s)!}{d! \, s!} $ is the binomial coefficient, $\kappa^2=\|K\|_{C_b^{2s}(\overline{D} \times \overline{D})}$, and $C$ is a uniform bound for the function coefficients in the operator $P_s $ (see condition under \eqref{diff-operator}). The adjoint operator 
$P_s^{*}: L^2(\overline{D};\mu) \longrightarrow {\mathcal H} _K$ of $P_s: {\mathcal H}_K \longrightarrow L^2(\overline{D};\mu)$ is given by
\begin{align}\label{adjoint}
P_s^{*}h=\int_{\overline{D}} h(x)P_s^{(1,0)} K(x,\cdot) \, \mathrm{d} \mu(x), \quad \mbox{for all $h\in L^2(\overline{D};\mu)$},
\end{align} 
where the notation $P_s^{(1,0)}K$ stands for the action of the differential operator $P_s$ on the first variable of the kernel function $K$. As a consequence, the bounded linear operator $B_s: \mathcal{H}_K\longrightarrow \mathcal{H}_K$, defined by
\begin{align}\label{positive}
B_sh:=P_s^{*}P_s h=\int_{\overline{D}} P_s h(x) P_s^{(1,0)} K(x,\cdot) \mathrm{~d}\mu(x),
\end{align}
is a positive semidefinite trace class operator that satisfies $\operatorname{Tr}(B_s)\leq (C_{d+s}^dC\kappa)^2$.
\end{proposition}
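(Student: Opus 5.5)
The argument rests on the differential reproducing property \cite{zhou2008derivative,RCSP2}: if $K\in C_b^{2s+1}(\overline{D}\times\overline{D})$, then for every multi-index $\alpha\in I_s$ and $x\in\overline{D}$ the partial derivative $\partial^{\alpha,(1,0)}K(x,\cdot)$ belongs to $\mathcal{H}_K$. Moreover,
\[
\partial^\alpha u(x)=\langle u,\partial^{\alpha,(1,0)}K(x,\cdot)\rangle_{\mathcal{H}_K}
\quad\text{and}\quad
\|\partial^{\alpha,(1,0)}K(x,\cdot)\|_{\mathcal{H}_K}^2=\partial^{\alpha,(1,0)}\partial^{\alpha,(0,1)}K(x,x)\le \kappa^2 .
\]
By linearity, $P_s u(x)=\langle u,P_s^{(1,0)}K(x,\cdot)\rangle_{\mathcal{H}_K}$ with $P_s^{(1,0)}K(x,\cdot)=\sum_{\alpha\in I_s}\phi_\alpha(x)\partial^{\alpha,(1,0)}K(x,\cdot)$. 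Since $|\phi_\alpha|\le C$ and $\# I_s=\binom{d+s}{d}$, the triangle inequality gives $\|P_s^{(1,0)}K(x,\cdot)\|_{\mathcal{H}_K}\le C_{d+s}^dC\kappa$ uniformly in $x$. Cauchy--Schwarz then yields $|P_su(x)|\le C_{d+s}^dC\kappa\|u\|_{\mathcal{H}_K}$ for all $x$. Measurability of $x\mapsto P_su(x)$ follows from continuity of the derivatives together with measurability of the bounded coefficients. Integrating the square against the probability measure $\mu$ gives $\|P_su\|_{L^2(\mu)}\le C_{d+s}^dC\kappa\|u\|_{\mathcal{H}_K}$, which is the operator norm bound.

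For the adjoint, I define the right-hand side of \eqref{adjoint} as a Bochner integral in $\mathcal{H}_K$. The integrand $x\mapsto h(x)P_s^{(1,0)}K(x,\cdot)$ is strongly measurable; this holds because $\mathcal{H}_K$ is separable (as $K$ is continuous) and the map is weakly measurable. Its norm is bounded by $C_{d+s}^dC\kappa|h(x)|$, which lies in $L^1(\mu)\supset L^2(\mu)$ since $\mu$ is a probability measure. Hence the integral exists. Pairing it with $u\in\mathcal{H}_K$ and moving the inner product inside the Bochner integral gives
\[
\Big\langle u,\int h(x)P_s^{(1,0)}K(x,\cdot)\,d\mu\Big\rangle=\int h(x)P_su(x)\,d\mu=\langle P_su,h\rangle_{L^2(\mu)},
\]
which identifies $P_s^*$. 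The formula \eqref{positive} for $B_s$ then follows by substituting $h=P_su$. Positivity is immediate from $\langle B_su,u\rangle=\|P_su\|_{L^2(\mu)}^2\ge0$.

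The trace class property is the step requiring the most care. I will write $B_s=\int_{\overline{D}} \xi_x\otimes\xi_x\,d\mu(x)$ with $\xi_x:=P_s^{(1,0)}K(x,\cdot)$, where $(\xi\otimes\xi)u=\langle u,\xi\rangle\xi$. Each rank-one operator $\xi_x\otimes\xi_x$ is positive with trace norm $\|\xi_x\|^2\le (C_{d+s}^dC\kappa)^2$. The map $x\mapsto\xi_x\otimes\xi_x$ is measurable into the separable Banach space of trace class operators, so the integral converges as a Bochner integral in trace norm. Alternatively, I can take any orthonormal basis $\{e_j\}$ of $\mathcal{H}_K$ and use monotone convergence together with Parseval:
\[
\sum_j\langle B_se_j,e_j\rangle=\int\sum_j\langle e_j,\xi_x\rangle^2d\mu(x)=\int\|\xi_x\|^2_{\mathcal{H}_K}d\mu(x)\le (C_{d+s}^dC\kappa)^2 .
\]
For a positive operator, finiteness of this sum is exactly the trace class condition, and it gives the stated bound on $\operatorname{Tr}(B_s)$. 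The Parseval route avoids most of the measurability fuss, so I will use it. The only remaining subtlety is justifying the derivative reproducing property up to order $s$. The hypothesis $K\in C_b^{2s+1}$ is more than is needed, since $C^{2s}$ regularity plus boundedness suffices; I will simply cite it from \cite{zhou2008derivative}.
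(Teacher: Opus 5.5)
Your proposal is correct and follows essentially the same route as the paper. The differential reproducing property gives $P_s u(x)=\langle u,P_s^{(1,0)}K(x,\cdot)\rangle_{\mathcal{H}_K}$ and the norm bound; pairing with $u$ and moving the inner product through the integral identifies $P_s^*$; and an orthonormal-basis/Parseval computation bounds $\operatorname{Tr}(B_s)$. Working with $\xi_x=P_s^{(1,0)}K(x,\cdot)$ collapses the paper's double sum $\sum_{\alpha,\beta}\phi_\alpha(x)\phi_\beta(x)D^{(\alpha,\beta)}K(x,x)$ into $\|\xi_x\|_{\mathcal{H}_K}^2$, so the sum--integral interchange becomes a clean Tonelli step, and your Bochner-integral argument for the adjoint formula supplies a justification the paper leaves implicit.
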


In practice, we only have access to finite datasets obtained by sampling the measure $\mu$. The following proposition is an empirical version of Proposition \ref{Wel-Ope}.

\begin{proposition}\label{Wel-Emp} 
Let $X^{(1)},\cdots, X^{(N)}$ be i.i.d. random samples drawn from the sampling probability measure $\mu$. Then the operator
$P_{s,N}: \mathcal{H}_K \rightarrow \mathbb{R}^{N}$ defined by
\begin{align*}
P_{s,N}h=\frac{1}{\sqrt{N}}P_sh(X_N):=\frac{1}{\sqrt{N}}\mathrm{Vec}\left(P_sh(X^{(1)})| \cdots | P_sh(X^{(N)})\right),
\end{align*}
is a bounded linear operator.
Its adjoint operator $P_{s,N}^*:\mathbb{R}^{N}  \rightarrow \mathcal{H}_K$ has finite rank and is given by
\begin{align}\label{emp-adjoint}
P^*_{s,N}W=\frac{1}{\sqrt{N}}W^{T}P_s^{(1,0)} K(X_N,\cdot), \quad W\in \mathbb{R}^{N}.   
\end{align}
Moreover, the operator $B_{s,N}$ defined by
\begin{align}\label{emp-ope}
B_{s,N}h:=P^*_{s,N}P_{s,N}h=\frac{1}{N}P_sf(X_N)\cdot P_s^{(1,0)} K(X_N,\cdot),    
\end{align}
is a positive semidefinite compact operator. 
\end{proposition}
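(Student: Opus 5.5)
The plan is to derive Proposition~\ref{Wel-Emp} from Proposition~\ref{Wel-Ope} by applying the same argument to the empirical measure $\mu^N=\frac{1}{N}\sum_{i=1}^N\delta_{X^{(i)}}$ in place of $\mu$.

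\emph{Boundedness of $P_{s,N}$.} The differential reproducing property \cite{zhou2008derivative,RCSP2} holds since $K\in C_b^{2s+1}$. For every multi-index $|\alpha|\le s$ and every $x\in\overline{D}$ it gives $\partial^\alpha h(x)=\langle h,\partial^{\alpha,(1,0)}K(x,\cdot)\rangle_{\mathcal H_K}$. It also gives $\|\partial^{\alpha,(1,0)}K(x,\cdot)\|_{\mathcal H_K}^2=\partial^{\alpha,(1,0)}\partial^{\alpha,(0,1)}K(x,x)\le\kappa^2$.

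Summing over the $C_{d+s}^d$ multi-indices with coefficients bounded by $C$ yields the pointwise estimate $|P_sh(x)|\le C_{d+s}^dC\kappa\|h\|_{\mathcal H_K}$ for every $x$. Squaring, averaging over the $N$ samples, and taking square roots gives
\[
\|P_{s,N}h\|_{\mathbb{R}^N}\le C_{d+s}^dC\kappa\|h\|_{\mathcal H_K}.
\]
Linearity of $P_{s,N}$ is immediate from the linearity of $P_s$.

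\emph{The adjoint.} Take $W\in\mathbb{R}^N$ and write $P_sh(X^{(i)})=\langle h,P_s^{(1,0)}K(X^{(i)},\cdot)\rangle_{\mathcal H_K}$. This is legitimate because $P_s$ acts linearly on the first variable, with the coefficients $\phi_\alpha$ evaluated at $X^{(i)}$. Then
\[
\langle P_{s,N}h,W\rangle_{\mathbb{R}^N}=\frac{1}{\sqrt N}\sum_{i=1}^N W_iP_sh(X^{(i)})=\Big\langle h,\frac{1}{\sqrt N}\sum_{i=1}^N W_iP_s^{(1,0)}K(X^{(i)},\cdot)\Big\rangle_{\mathcal H_K},
\]
which identifies $P_{s,N}^*$ as in \eqref{emp-adjoint}. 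Its range lies in the span of the $N$ functions $P_s^{(1,0)}K(X^{(i)},\cdot)$, so $P_{s,N}^*$ has finite rank.

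\emph{Properties of $B_{s,N}$.} Composing the two formulas gives the expression \eqref{emp-ope}, with $f$ read as $h$. Positive semidefiniteness follows from $\langle B_{s,N}h,h\rangle_{\mathcal H_K}=\|P_{s,N}h\|^2\ge0$, and self-adjointness is evident from $B_{s,N}=P_{s,N}^*P_{s,N}$. Compactness follows because $B_{s,N}$ has finite rank, being at most $N$. In fact it is trace class, with $\operatorname{Tr}(B_{s,N})=\frac1N\sum_i\|P_s^{(1,0)}K(X^{(i)},\cdot)\|_{\mathcal H_K}^2\le(C_{d+s}^dC\kappa)^2$.

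\emph{Main obstacle.} The only delicate point is justifying that $P_s^{(1,0)}K(x,\cdot)\in\mathcal H_K$ reproduces $P_sh(x)$ at every point, including boundary points. There the $|\alpha|=0$ term carries the indicator $\mathbf 1_{\partial D}$ and the derivative terms carry $\mathbf 1_{D}$. This is handled by the pointwise nature of the argument: for each fixed $x$, $P_s$ is a finite linear combination of the bounded functionals $h\mapsto\partial^\alpha h(x)$ with scalar coefficients $\phi_\alpha(x)$. No measurability or continuity of the coefficients is needed, in contrast with the $L^2(\mu)$ setting of Proposition~\ref{Wel-Ope}.
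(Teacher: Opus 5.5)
Your proof is correct and follows essentially the same route as the paper: boundedness of $P_{s,N}$ comes from the differential reproducing property with the same constant $C_{d+s}^dC\kappa$, and the adjoint and $B_{s,N}$ come from direct computation. The only difference is that you get compactness of $B_{s,N}$ immediately from its finite rank (at most $N$), whereas the paper appeals to the trace-class argument of Proposition~\ref{Wel-Ope}; your route is shorter and fully sufficient.
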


\begin{proof}
The formal explicit forms of $P_{s,N}^{\ast}$ and $B_{s,N}$ follow from a direct computation. We have (the constants are the same as in Proposition \ref{Wel-Ope})
{\small
\begin{equation*}
\|P_{s,N}h\|^2=\frac{1}{N}\sum_{n=1}^{N}|P_sh(X^{(n)})|^2\leq \frac{C^2}{N}\sum_{n=1}^{N}\sum_{\alpha \in I_s}\sum_{\beta \in I_s}\|h\|_{C_b^{|\alpha|}}\|h\|_{C_b^{|\beta|}}\leq (C_{d+s}^dC\kappa)^2\|h\|^2_{\mathcal{H}_K},
\end{equation*}}
which implies that $P_{s,N}$ is bounded and that $\|P_{s,N}\|\leq C_{d+s}^dC\kappa$. The compactness of the operator $B_{s,N}$ follows from a proof similar to that of Proposition \ref{Wel-Ope} for $B_s$.
\end{proof}

Having defined the operators $P_s$ and $P_{s,N}$, we immediately obtain the following operator representation of the minimizers that solve the optimization problems \eqref{emp-pro}-\eqref{emp-fun} and \eqref{exp-fun}-\eqref{exp-pro}.

\begin{corollary}
\label{Rep-Ope} 
Let $\widehat{u}_{\lambda,N}$ and $u_{\lambda}^*$ be the minimizers of \eqref{emp-fun} and \eqref{exp-fun} respectively. Then, for all $\lambda>0$, these minimizers are unique and are given by 
\begin{equation}
u_{\lambda}^*:=(B_s+\lambda I)^{-1}P_s^{*}h, \qquad
\widehat{u}_{\lambda,N}:=\frac{1}{\sqrt{N}}(B_{s,N}+\lambda I)^{-1}P_{s,N}^{*}Y_N.\label{em10} 
\end{equation}
\end{corollary}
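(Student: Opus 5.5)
The plan is to rewrite both risks as quadratic functionals on the Hilbert space $\mathcal{H}_K$ using the operators from Propositions~\ref{Wel-Ope} and~\ref{Wel-Emp}, and then read off the normal equations. For the statistical risk, Proposition~\ref{Wel-Ope} gives that $P_s:\mathcal{H}_K\to L^2(\overline{D};\mu)$ is bounded. Hence, for $h\in L^2(\overline{D};\mu)$,
\[
R_\lambda(u)=\langle (B_s+\lambda I)u,u\rangle_{\mathcal{H}_K}-2\langle P_s^*h,u\rangle_{\mathcal{H}_K}+\|h\|_{L^2(\mu)}^2 ,
\]
where we use $\langle P_su,h\rangle_{L^2(\mu)}=\langle u,P_s^*h\rangle_{\mathcal{H}_K}$ and $B_s=P_s^*P_s$. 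For the empirical risk, the definition of $P_{s,N}$ gives $\frac1N\sum_i|P_su(X^{(i)})-Y^{(i)}|^2=\|P_{s,N}u-N^{-1/2}Y_N\|_{\mathbb{R}^N}^2$. The same expansion then holds with $B_{s,N}$ in place of $B_s$ and $N^{-1/2}P_{s,N}^*Y_N$ in place of $P_s^*h$.

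Next I establish existence and uniqueness. Since $B_s$ and $B_{s,N}$ are bounded, self-adjoint and positive semidefinite (Propositions~\ref{Wel-Ope}, \ref{Wel-Emp}), for $\lambda>0$ we have
\[
\langle (B+\lambda I)u,u\rangle\ge\lambda\|u\|^2 .
\]
So each functional is continuous, $\lambda$-strongly convex and coercive on $\mathcal{H}_K$, and therefore has a unique minimizer. The same inequality, combined with Lax--Milgram (or simply that a self-adjoint operator with spectrum in $[\lambda,\infty)$ is invertible), shows that $B+\lambda I$ is boundedly invertible with $\|(B+\lambda I)^{-1}\|\le\lambda^{-1}$.

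To identify the minimizer, I compute the Gâteaux derivative. For any direction $v$,
\[
\tfrac{d}{dt}R_\lambda(u+tv)\big|_{t=0}=2\langle (B_s+\lambda I)u-P_s^*h,\,v\rangle_{\mathcal{H}_K}.
\]
By convexity, vanishing for all $v$ characterizes the minimizer, which gives $(B_s+\lambda I)u^*_\lambda=P_s^*h$. Equivalently, one can complete the square:
\[
R_\lambda(u)=\|(B_s+\lambda I)^{1/2}(u-u^*_\lambda)\|^2+\text{const}.
\]
Inverting yields the first formula in \eqref{em10}. The empirical case is identical and gives $(B_{s,N}+\lambda I)\widehat{u}_{\lambda,N}=N^{-1/2}P_{s,N}^*Y_N$, which is the second formula.

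There is no real obstacle here; the only point requiring care is bookkeeping. First, one should check that $h\in L^2(\mu)$, so that $P_s^*h$ is defined via \eqref{adjoint}. Second, the factor $N^{-1/2}$ must be tracked correctly: the normalization of $P_{s,N}$ means the data vector enters as $N^{-1/2}Y_N$, not $Y_N$. Positivity of $\lambda$ is essential, since for $\lambda=0$ uniqueness can fail.
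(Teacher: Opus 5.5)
Your argument is correct and is exactly the standard Tikhonov normal-equation reasoning the paper relies on when it says the corollary follows immediately from Propositions~\ref{Wel-Ope} and~\ref{Wel-Emp}; the paper gives no separate proof. Your tracking of the factor $N^{-1/2}$, via $\frac1N\sum_i|P_su(X^{(i)})-Y^{(i)}|^2=\|P_{s,N}u-N^{-1/2}Y_N\|^2$, correctly reproduces the stated formula for $\widehat{u}_{\lambda,N}$.
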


\subsubsection{Kernel representer}
We now derive a kernel-based representation for the solution of the learning problem~\eqref{emp-pro}–\eqref{emp-fun}. In particular, we obtain a closed-form expression for the estimator in terms of the kernel function by introducing a generalized Gram matrix $P_s^{(1,1)}K(X_N,X_N)$, where the notation $P_s^{(1,1)}K$ denotes the action of the operator $P_s$ on both arguments of the kernel. 

First, we show that the generalized Gram matrix $P_s^{(1,1)}K(X_N,X_N)$ is positive semidefinite. Consequently, for any $\lambda>0$, the matrix $P_s^{(1,1)}K(X_N,X_N) + \lambda N I$ is invertible, where $I$ denotes the identity matrix.

\begin{proposition}\label{Pos-Gra}
Given a Mercer kernel $K$  with $K\in C_b^{2s+1}(\overline{D}\times \overline{D})$, the generalized Gram matrix $P_s^{(1,1)}K(X_N,X_N)$ is positive semidefinite. 
\end{proposition}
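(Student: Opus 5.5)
The plan is to realize the generalized Gram matrix as a matrix of RKHS inner products. It is then a Gram matrix of vectors in $\mathcal{H}_K$ and is automatically positive semidefinite. The key tool is the differential reproducing property \cite{zhou2008derivative,RCSP2}. Since $K\in C_b^{2s+1}(\overline{D}\times\overline{D})$, for every multi-index $\alpha$ with $|\alpha|\le s$ and every $x\in\overline{D}$ the function $\partial^{\alpha,(1)}K(x,\cdot)$ belongs to $\mathcal{H}_K$, and it satisfies
\[
\partial^\alpha u(x)=\langle u,\partial^{\alpha,(1)}K(x,\cdot)\rangle_{\mathcal{H}_K}\quad\text{for all }u\in\mathcal{H}_K.
\]
Combining this with linearity and the form \eqref{pde-operator} of $P_s$, I would define for each sample point
\[
\Phi_i:=P_s^{(1,0)}K(X^{(i)},\cdot)=\sum_{\alpha\in I_s}\phi_\alpha(X^{(i)})\,\partial^{\alpha,(1)}K(X^{(i)},\cdot)\in\mathcal{H}_K .
\]
This is a finite linear combination of elements of $\mathcal{H}_K$, with scalar coefficients $\phi_\alpha(X^{(i)})$ evaluated at a fixed point. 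The reproducing property then gives $P_s u(X^{(i)})=\langle u,\Phi_i\rangle_{\mathcal{H}_K}$ for all $u\in\mathcal{H}_K$, which is exactly the identity already used for the adjoint in \eqref{emp-adjoint}.

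Next I would identify the entries of the matrix. Apply the identity with $u=\Phi_j$. By the symmetry $K(x,y)=K(y,x)$, $\Phi_j$ is the function $y\mapsto P_s^{(1,0)}K(X^{(j)},y)$, and applying $P_s$ in $y$ at $y=X^{(i)}$ gives the $(i,j)$ entry of $P_s^{(1,1)}K(X_N,X_N)$. Hence
\[
\bigl[P_s^{(1,1)}K(X_N,X_N)\bigr]_{ij}=\langle \Phi_j,\Phi_i\rangle_{\mathcal{H}_K}.
\]
This also shows the matrix is symmetric. For any $c\in\mathbb{R}^N$ we then get
\[
c^T P_s^{(1,1)}K(X_N,X_N)\,c=\Bigl\|\sum_{i=1}^N c_i\Phi_i\Bigr\|_{\mathcal{H}_K}^2\ge 0,
\]
which is the claim. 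Alternatively, one can write $P_s^{(1,1)}K(X_N,X_N)=N\,P_{s,N}P_{s,N}^*$ using Proposition~\ref{Wel-Emp}; this is manifestly positive semidefinite.

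The main obstacle is the careful justification of the mixed derivatives. The entries involve $\partial^{\alpha,(1)}\partial^{\beta,(2)}K$ with $|\alpha|,|\beta|\le s$, so one needs $K\in C^{2s}$. We also need that these agree with $\langle\partial^{\beta,(1)}K(X^{(j)},\cdot),\partial^{\alpha,(1)}K(X^{(i)},\cdot)\rangle_{\mathcal{H}_K}$, together with the bookkeeping that the coefficients $\phi_\alpha$, including the boundary indicator term, act as constants at each sample point. I would cite the differential reproducing theorem for exactly this identity, which requires $K\in C^{2s}$ and is covered by the hypothesis $C_b^{2s+1}$. Everything else is the standard Gram argument.
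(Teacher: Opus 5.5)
Your proposal is correct and follows essentially the same route as the paper. Both arguments use the differential reproducing property to identify $\bigl[P_s^{(1,1)}K(X_N,X_N)\bigr]_{ij}$ with $\langle P_s^{(1,0)}K(X^{(j)},\cdot),P_s^{(1,0)}K(X^{(i)},\cdot)\rangle_{\mathcal{H}_K}$, so that quadratic forms become squared RKHS norms. The only difference is that the paper first diagonalizes the matrix, taking its symmetry as given, and checks $d_i=\|\widetilde{e}_i\|_{\mathcal{H}_K}^2\ge 0$ eigenvector by eigenvector, whereas you evaluate the quadratic form at an arbitrary $c\in\mathbb{R}^N$, which avoids the eigendecomposition and derives the symmetry rather than assuming it.
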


\begin{proof}
Since the generalized Gram matrix $P_s^{(1,1)}K(X_N,X_N)$ is real symmetric, then there exists an orthonormal matrix $P\in\mathbb{R}^{N\times N}$ that diagonalizes $P_s^{(1,1)}K(X_N,X_N)$. This means that
\begin{multline*}
P_s^{(1,1)}K(X_N,X_N)=PDP^{\top}\\
=\begin{bmatrix}
|&|&\dots&|\\
f_1&f_2&\dots&f_N\\
|&|&\dots&|
\end{bmatrix}
\begin{bmatrix}
d_1&0&\dots&0 \\
0&d_2&\dots&0 \\
\vdots&\vdots&\ddots&\vdots\\
0&0&\dots&d_N
\end{bmatrix}\begin{bmatrix}
|&|&\dots&|\\
f_1&f_2&\dots&f_N\\
|&|&\dots&|
\end{bmatrix}^{\top},
\end{multline*} 
where  $\left \{d_i \right\}_{i=1}^{N}$ and $\left \{f_i \right\}_{i=1}^{N}$ are the real eigenvalues and the corresponding basis of orthonormal eigenvectors of $P_s^{(1,1)}K(X_N,X_N)$. 
We now define $\widetilde{e}_i=\langle f_i, P_s^{(1,0)}K(X_N,\cdot ) \rangle_{\mathbb{R}^{N}}$. By \cite[Theorem 2.7]{RCSP2}, we obtain that $\widetilde{e}_i\in\mathcal{H}_K$ and that
\begin{equation*}
\begin{aligned}
\|\widetilde{e}_i\|_{\mathcal{H}_K}^2 &= \left\langle\langle f_i, P_s^{(1,0)}K(X_N,\cdot ) \rangle_{\mathbb{R}^{N}},\langle f_i, P_s^{(1,0)}K(X_N,\cdot ) \rangle_{\mathbb{R}^{N}}\right\rangle_{\mathcal{H}_K} \\
&=f_i^{\top}P_s^{(1,1)}K(X_N,X_N)f_i=f_i^{\top}(d_i f_i)
=d_i.
\end{aligned}
\end{equation*}
Hence, $d_i\geq0$ for all $i=1,\cdots,N$ and hence we can conclude that the generalized Gram matrix $P_s^{(1,1)}K(X_N,X_N)$ is positive semidefinite.
\end{proof}

\begin{remark}
\normalfont
(i) Proposition~\ref{Pos-Gra} establishes an interesting result: the regularity of the kernel function $K$ guarantees automatically the positive semidefiniteness of the generalized Gram matrix $P_s^{(1,1)}K(X_N,X_N)$. In other approaches, similar conditions are imposed as additional assumptions, like, for instance, in \cite{franke1998solving,giesl2007meshless}, where the linear independence of a Hermite--Birkhoff interpolation functional is required.

(ii) By the definition of positive semidefinite kernels, the standard Gram matrix $K(X_N, X_N)$ is positive semidefinite, which corresponds to the case that the order index $s=0$ in Proposition~\ref{Pos-Gra}. For $s=1$, this result reduces to the result stated in Proposition~3.4 of \cite{RCSP2}.
\end{remark}

Below, we derive a Differential Representer Theorem that provides a closed-form solution to the minimization problem \eqref{emp-pro}-\eqref{emp-fun}. 

\begin{theorem}[{\bf Differential Representer Theorem}]
\label{Rep-Ker} 
For every $\lambda>0$, the optimization problem \eqref{emp-pro} has a unique solution $\widehat{u}_{\lambda, N}$ that can be represented as
\begin{equation}
\label{rep-ker}
\widehat{u}_{\lambda,N}= \sum_{i=1}^N \widehat {c}_{i}P_s^{(1,0)}K({X}^{(i)},\cdot),
\end{equation}
with $\widehat {c}_{1}, \ldots, \widehat {c}_{N} \in \mathbb{R}$. If we denote by $\widehat{c} \in \mathbb{R}^{N}$ the vectorization of $\left(\widehat {c}_{1}| \cdots | \widehat {c}_{N}\right)$, we have
\begin{align*}
\widehat{c}=(P_s^{(1,1)}K(X_N,X_N)+\lambda NI)^{-1}Y_N.
\end{align*}
\end{theorem}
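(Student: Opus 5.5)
Existence and uniqueness follow directly from Corollary~\ref{Rep-Ope}. There, $\widehat{u}_{\lambda,N}=\frac{1}{\sqrt{N}}(B_{s,N}+\lambda I)^{-1}P_{s,N}^{*}Y_N$ is the unique minimizer for every $\lambda>0$. The reason is that $\widehat R_{\lambda,N}(u)=\|P_{s,N}u-\tfrac{1}{\sqrt N}Y_N\|_{\mathbb{R}^N}^2+\lambda\|u\|^2_{\mathcal H_K}$ is strictly convex and coercive, and $B_{s,N}+\lambda I$ is invertible because $B_{s,N}$ is positive semidefinite (Proposition~\ref{Wel-Emp}). What remains is to turn this operator formula into the finite kernel expansion \eqref{rep-ker} and to identify the coefficients.

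The key tool is a push-through identity. Set $A:=P_{s,N}$, so that $B_{s,N}=A^*A$. Then
\[
(A^*A+\lambda I)A^*=A^*(AA^*+\lambda I),
\]
so $(A^*A+\lambda I)^{-1}A^*=A^*(AA^*+\lambda I)^{-1}$. On the right-hand side, $AA^*+\lambda I$ is an $N\times N$ matrix, and it is invertible because $AA^*\succeq 0$. Next I compute $AA^*$ explicitly. By \eqref{emp-adjoint}, $A^*W=\frac{1}{\sqrt N}\sum_j W_jP_s^{(1,0)}K(X^{(j)},\cdot)$. Applying $P_s$ and evaluating at $X^{(i)}$, I need
\[
P_s\bigl[P_s^{(1,0)}K(X^{(j)},\cdot)\bigr](X^{(i)})=P_s^{(1,1)}K(X^{(j)},X^{(i)}).
\]
This holds by the differential reproducing property \cite{zhou2008derivative,RCSP2}: derivatives of $K(x,\cdot)$ up to order $s$ lie in $\mathcal H_K$ and represent point evaluation of $\partial^\alpha$. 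Indeed, $\partial^\beta u(y)=\langle u,\partial^{\beta}_{(1)}K(y,\cdot)\rangle$, so mixed derivatives of $K$ may be taken in either variable. This is exactly the fact used in the proof of Proposition~\ref{Pos-Gra} via \cite[Theorem 2.7]{RCSP2}. It gives $AA^*=\frac1N P_s^{(1,1)}K(X_N,X_N)$, using the symmetry of the generalized Gram matrix.

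Substituting, I get
\[
\widehat u_{\lambda,N}=\tfrac{1}{\sqrt N}A^*\Bigl(\tfrac1N P_s^{(1,1)}K(X_N,X_N)+\lambda I\Bigr)^{-1}Y_N
=\tfrac1N\sum_j\bigl[N(P_s^{(1,1)}K+\lambda N I)^{-1}Y_N\bigr]_jP_s^{(1,0)}K(X^{(j)},\cdot).
\]
The factors of $N$ cancel, which yields \eqref{rep-ker} with $\widehat c=(P_s^{(1,1)}K(X_N,X_N)+\lambda NI)^{-1}Y_N$. The invertibility of this matrix is ensured by Proposition~\ref{Pos-Gra}.

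As a cross-check, one can also argue directly. Decompose $u=u_\parallel+u_\perp$, where $u_\parallel$ lies in the span of the $P_s^{(1,0)}K(X^{(i)},\cdot)$. Then $u_\perp$ does not affect the data term, since $P_su(X^{(i)})=\langle u,P_s^{(1,0)}K(X^{(i)},\cdot)\rangle$, so any minimizer lies in this span. Setting the gradient of the resulting quadratic in $c$ to zero gives $G(G c+\lambda N c-Y_N)=0$ with $G=P_s^{(1,1)}K(X_N,X_N)$. I expect the main obstacle to be justifying rigorously that $P_s^{(1,0)}K(x,\cdot)\in\mathcal H_K$ represents $u\mapsto P_su(x)$. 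This needs the regularity $K\in C_b^{2s+1}$ and the bounded coefficients $\phi_\alpha$, together with care with the boundary indicator terms, where the evaluation is pointwise at $x\in\overline D$. The algebra after that is routine.
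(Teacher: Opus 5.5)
Your proof is correct, and it reaches the formula by a somewhat different and more streamlined route than the paper.

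\textbf{The paper's route.} It first argues that $\mathcal H_K^N=\mathrm{span}\{P_s^{(1,0)}K(X^{(i)},\cdot)\}$ is invariant under $B_{s,N}+\lambda I$ and under its inverse, so $\widehat u_{\lambda,N}$ lies in this span. It then applies $B_{s,N}+\lambda I$ to \eqref{em10} to get the coefficient-level identity \eqref{euler equ in this case}. Next it proposes $\widehat c=(P_s^{(1,1)}K(X_N,X_N)+\lambda NI)^{-1}Y_N$, checks that this vector satisfies \eqref{euler equ in this case}, and concludes via uniqueness of the minimizer.

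\textbf{Your route.} The push-through identity $(A^*A+\lambda I)^{-1}A^*=A^*(AA^*+\lambda I)^{-1}$ does all of this in one line. Membership in the span and the coefficient formula come out together, and the invariant-subspace step is not needed. Because it is an exact operator identity, you never have to pass from \eqref{euler equ in this case} to $\widehat c$. That matters, since that equation alone does not determine $\widehat c$ when the generalized Gram matrix is singular. The paper handles this case through "verification plus uniqueness", which is in substance your identity applied to $A^*w$.

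\textbf{Common ingredient.} Both arguments rest on $P_s[P_s^{(1,0)}K(x,\cdot)](y)=P_s^{(1,1)}K(x,y)=\langle P_s^{(1,0)}K(x,\cdot),P_s^{(1,0)}K(y,\cdot)\rangle_{\mathcal H_K}$ from the differential reproducing property. This gives $AA^*=\frac1N P_s^{(1,1)}K(X_N,X_N)$ and, as a by-product, reproves Proposition~\ref{Pos-Gra}.

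\textbf{Boundary terms.} Your concern here is milder than you suggest. For $x\in\partial D$ only $\phi_0=\mathbf 1_{\partial D}$ is nonzero, so $P_s^{(1,0)}K(x,\cdot)=K(x,\cdot)$ and no derivatives at boundary points are ever needed. The indicator coefficients are only evaluated pointwise, never differentiated.
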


\begin{proof} The proof is based on the operator representations of the minimizers that we introduced in Proposition \ref{Wel-Emp}, which allows us to use tools from spectral theory. 
Let $\mathcal{H}_K^{N}$ be the space given by
\begin{equation}\label{sub-spa}
\mathcal{H}_K^{N}:=\mathrm{span}\left\{P_s^{(1,0)}K(X^{(i)},\cdot)\mid i=1,\cdots, N\right\}.
\end{equation}
Obviously $\mathcal{H}_K^N$ defined in \eqref{sub-spa} is a subspace of $\mathcal{H}_K$ since $P_s^{(1,0)}K(x,\cdot)\in\mathcal{H}_K$ for all $x\in \mathbb{R}^{d}$. Then by the representation of the operator $B_{s,N}$ in Proposition \ref{Wel-Emp}, we know that $B_{s,N}(\mathcal{H}_{K}^N) \subseteq \mathcal{H}_{K}^N$ (see the expression \eqref{emp-ope}), that is, $\mathcal{H}_{K}^N$ is an invariant space for the operator $B_{s,N}$. This implies that, for any $\lambda>0 $, $(B_{s,N}+ \lambda I)(\mathcal{H}_{K}^N) \subseteq \mathcal{H}_{K}^N$. Now, since by Proposition \ref{Wel-Emp} the operator $B_{s,N} $ is positive semidefinite, we can conclude that the restriction $(B_{s,N}+ \lambda I)|_{\mathcal{H}_{K}^N} $ is invertible and since the space $\mathcal{H}_{K}^N  $ is finite-dimensional then it is also an invariant subspace of $(B_{s,N}+ \lambda I)|_{\mathcal{H}_{K}^N}^{-1} $, that is 
$(B_{s,N}+ \lambda I)|_{\mathcal{H}_{K}^N}^{-1} \left(\mathcal{H}_{K}^N\right) \subset \mathcal{H}_{K}^N$.
Thus, there exist constants $\widehat {c}_{1}, \ldots, \widehat {c}_{N} \in \mathbb{R}$ such that 
\begin{equation}\label{rep-ker1}
\widehat{u}_{\lambda,N}= \sum_{i=1}^N  \widehat {c}_{i} P_s^{(1,0)}K(X^{(i)},\cdot).
\end{equation}
Then, applying $(B_{s,N} + \lambda I)$ on both sides of \eqref{em10}, plugging \eqref{rep-ker1} into the identity, and denoting by $\widehat{c} \in \mathbb{R}^{N}$ the vectorization $\left(\widehat {c}_{1}| \cdots | \widehat {c}_{N}\right)$, we obtain
\begin{align}
\label{euler equ in this case}
\widehat{c}^{\top} \left (\frac{1}{N}P_s^{(1,1)}K(X_N,X_N)+\lambda I\right) P_{s}^{(1,0)} K(X_N,\cdot)=\frac{1}{N}Y_N^{\top}P_{s}^{(1,0)} K(X_N,\cdot).
\end{align}
Since the matrix $P_{s}^{(1,1)}K(X_N,X_N)+\lambda N I$ is invertible due to the positive semidefiniteness of the generalized Gram matrix $P_{s}^{(1,1)}(X_N,X_N)$ that we proved in Proposition \ref{Pos-Gra}, we can write the expression 
\begin{align}
\label{expression for chat}
\widehat{c}= (P_{s}^{(1,1)}(X_N,X_N)+\lambda NI)^{-1} Y_N,
\end{align}
that a straightforward verification shows that plugged into \eqref{euler equ in this case} satisfies \eqref{euler equ in this case}. This shows that the function $\widehat{u}_{\lambda,N} $ in \eqref{rep-ker1} with $\widehat{c} $ determined by \eqref{expression for chat} is a minimizer of the regularized empirical risk functional $ \widehat{R}_{\lambda,N}$ in \eqref{emp-fun}. Since by Proposition \ref{Rep-Ope}, this minimizer is unique, the result follows.
\end{proof}

\begin{remark}[Consistency with Gaussian process regression]\label{Consistence to Gaussian process regression}
\normalfont 
When the kernel $K$ is the Gaussian kernel and $\lambda = \sigma^2/N$, the estimator in~\eqref{rep-ker} coincides with the posterior mean of the physics-informed Gaussian process regression of Raissi--Perdikaris--Karniadakis~\cite{raissi2017machine}, extending the classical Gaussian process--kernel equivalence~\cite{kanagawa2018gaussian,RCSP2} to the operator-learning setting considered here.
This equivalence is useful for two reasons.
First, because the posterior mean is linear in the observations $h(X_N)$, the dependence on the input function factors through the kernel basis (see later on~\eqref{kernel basis}); this is precisely what allows us in Section~\ref{From PDE Solvers to Solution Operators} to pass from a PDE solver to a solution operator $\widehat{\mathcal{T}}_{\lambda,N}$.
Second, our frequentest viewpoint trades the posterior-variance uncertainty quantification of the Bayesian formulation for uniform RKHS-norm convergence rates on the learned operator over source-regularity classes (Theorem~\ref{Tot-Rec-Err}), which to our knowledge have not been established in the physics-informed Gaussian process literature.
\end{remark}

\begin{remark}[Online low-rank kernel regression with kernels]\label{Online Regression with Kernels}
Online and lifelong learning aim at updating a model efficiently when data arrive sequentially. 
For kernel-based regression methods, such updates can be performed using recursive formulas for the inverse Gram matrix, avoiding repeated recomputation from scratch. 
Since the estimator \eqref{rep-ker} has a regression structure, similar ideas can be adapted in our setting by exploiting a block matrix inversion formula for the generalized Gram matrix. 
This yields an efficient online update scheme with reduced computational cost. 
We refer to the supplementary material for details.
\end{remark}

\begin{remark}[Uniqueness of the estimator]\label{Uniqueness}\normalfont
Define the kernel of the operator $P_s$ in the RKHS $\mathcal{H}_K$ as
$\mathcal{H}_{\mathrm{null}}:=\{h\in\mathcal{H}_K\mid P_s h=0\}$.
In general, the space $\mathcal{H}_{\mathrm{null}}$ contains non-zero constant functions. Moreover, it is a closed subspace of $\mathcal{H}_K$. Indeed, for any Cauchy sequence $h_n\in\mathcal{H}_{\mathrm{null}}$ with limit $h\in\mathcal{H}_K$, i.e. $\|h_n-h\|_{\mathcal{H}_K}\to0$ as $n\to\infty$, we have that $h\in\mathcal{H}_{\mathrm{null}}$ since the differential reproducing property \cite[Theorem 2.7]{RCSP2} leads to that
\begin{align*}
\|P_sh_n-P_sh\|_{\infty} \leq \|P_s(h_n-h)\|_{\infty}\leq C_{d+s}^d~C\kappa \|h_n-h\|_{\mathcal{H}_K}\to 0, \quad \text{as } n\to\infty.  
\end{align*}
Hence, we can decompose the RKHS as 
$\mathcal{H}_K = \mathcal{H}_{\mathrm{null}} \oplus \mathcal{H}_{\mathrm{null}}^\bot$,    
where $\mathcal{H}_{\mathrm{null}}^\bot$ stands for the space of all orthonormal complements of $\mathcal{H}_{\mathrm{null}}$ with respect to the RKHS inner product. By the expression \eqref{rep-ker} and \cite[Corollary 2.8]{RCSP2}, it is clear that $\widehat{u}_{\lambda,N} \in \mathcal{H}_{\operatorname{null}}^\bot$.

Although adding elements in $\mathcal{H}_{\mathrm{null}} $ to the estimator $\widehat{u}_{\lambda,N} $ does not change the value of the first part in the empirical risk functional \eqref{emp-fun}, the optimizer $\widehat{u}_{\lambda,N} $ is unique. This is because for all $u\in\mathcal{H}_{\mathrm{null}}$, it can be shown that
\begin{align*}
\widehat{R}_{\lambda,N}&(\widehat{u}_{\lambda,N}+u)=\frac{1}{N}\sum_{n=1}^{N} \|P_s\widehat{u}_{\lambda,N}(X^{(n)})-Y^{(n)}\|^2 + \lambda\left(\|\widehat{u}_{\lambda,N}\|_{\mathcal{H}_K}^2+\|u\|^2_{\mathcal{H}_K} \right).
\end{align*}
\end{remark}

\subsection{From PDE solvers to solution operators}\label{From PDE Solvers to Solution Operators}
Recall that our goal is to learn the solution operator $\mathcal{T}: h \mapsto u$, rather than to approximate a single solution. We rewrite the estimator~\eqref{rep-ker} in the form
\begin{align}\label{rep-ker2}
\widehat{u}_{\lambda,N}
= \big\langle h(X_N), \big(P_s^{(1,1)}K(X_N,X_N) + \lambda N I\big)^{-1}
P_s^{(1,0)}K(X_N,\cdot) \big\rangle,
\end{align}
where $\langle \cdot,\cdot \rangle$ denotes the Euclidean inner product on $\mathbb{R}^N$. The closed-form expression~\eqref{rep-ker2} provides a very simple and efficient procedure for computing approximate solutions of the PDE~\eqref{pde} for any given input function $h$. Specifically, for a kernel $K : \overline{D} \times \overline{D} \to \mathbb{R}$ satisfying $K \in C_b^{2s+1}(\overline{D} \times \overline{D})$, one may precompute, using the sample points $X^{(1)},\dots,X^{(N)}$, the {\bfi kernel basis}
\begin{align}\label{kernel basis}
(\psi_1,\cdots,\psi_N)=\big(P_s^{(1,1)}K(X_N,X_N) + \lambda N I\big)^{-1}
P_s^{(1,0)}K(X_N,\cdot).
\end{align}
Then, for any input function $h \in L^2(\overline{D};\mu)$, the corresponding approximate solution is obtained simply by taking the inner product of $h(X_N)$ with this kernel basis. This expression provides an interpretation of the estimator \eqref{rep-ker2} as a solution operator. 

Using the operator representation \eqref{em10}, we define the map $\widehat{\mathcal{T}}_{\lambda,N} : L^2(\overline{D};\mu) \to \mathcal{H}_K$
by
\begin{align}\label{empirical operator estimator}
\widehat{\mathcal{T}}_{\lambda,N}
:= \frac{1}{\sqrt{N}} \bigl(B_{s,N} + \lambda I\bigr)^{-1} P_{s,N}^{*} \pi_N,
\end{align}
where $\pi_N : L^2(\overline{D};\mu) \to \mathbb{R}^N$ denotes the projection operator defined as
\begin{align}\label{pi}
\pi_N(h) := \bigl(h(X^{(1)}), \dots, h(X^{(N)})\bigr),
\end{align}
with $X^{(1)}, \dots, X^{(N)}$ the i.i.d.\ samples drawn from the sampling probability measure $\mu$. We refer to $\widehat{\mathcal{T}}_{\lambda,N}$ as the {\bfi empirical solution operator estimator}. Similarly, we define the {\bfi statistical solution operator estimator}
$\mathcal{T}_{\lambda} : L^2(\overline{D};\mu) \to \mathcal{H}_K$ by
\begin{align}\label{statistical solution operator estimator}
\mathcal{T}_{\lambda} h
:= (B_s + \lambda I)^{-1} P_s^{*} h.
\end{align}

By Corollary~\ref{Rep-Ope}, it follows that for each
$h \in L^2(\overline{D};\mu)$, the estimators
$\widehat{\mathcal{T}}_{\lambda,N} h$ and $\mathcal{T}_{\lambda} h$
are the respective minimizers of~\eqref{emp-fun} and~\eqref{exp-fun}.
These two solution operator estimators provide regularized
approximations of the solution operator
$\mathcal{T}: h \mapsto u$ associated with the boundary value
problem~\eqref{pde}.

\begin{remark}[Connection to Green's function] \normalfont
In classical PDE theory \cite{evans2022partial}, if the Green’s function $G$ associated with the PDE~\eqref{pde} is known, then for any input functions $f$ and $g$, the corresponding solution can be expressed as
\[
u(x)=(\mathcal{T}h)(x)
= \int_{D} f(y)\, G(x,y)\, \mathrm{d}y
+ \int_{\partial D} g(\xi)\, \partial_{n_\xi} G(x,\xi)\, \mathrm{d}S_\xi.
\]
Suppose that we sample $N_1$ points in the interior of $D$ and $N_2 = N - N_1$ points on the boundary $\partial D$. The solution can then be approximated by
\begin{align}\label{eq1}
u =\mathcal{T}h\approx \sum_{i=1}^{N_1} f(X^{(i)})\, G(\cdot,X^{(i)})
+ \sum_{i=N_1+1}^{N} g(X^{(i)})\, \partial_{n_\xi} G(\cdot,X^{(i)}).
\end{align}
Using the kernel basis~\eqref{kernel basis}, the estimator~\eqref{rep-ker} admits the representation
\begin{align}\label{eq3}
\widehat{u}_{\lambda,N}=\widehat{\mathcal{T}}_{\lambda,N}h 
= \sum_{i=1}^{N_1} f(X^{(i)})\, \psi_i
+ \sum_{i=N_1+1}^{N} g(X^{(i)})\, \psi_i.    
\end{align}
Combining \eqref{eq1} and \eqref{eq3}, the kernel basis $\{\psi_i\}_{i=1}^N$ can be interpreted as an approximation of the Green's function and its normal derivatives associated with the PDE \eqref{pde}. Consequently, the proposed physics-informed kernel method effectively learns an approximation of the Green's function itself. This observation explains why the resulting estimator naturally defines a solution operator, rather than merely approximating a single PDE solution.
\end{remark}

\section{Estimation and approximation error bounds}\label{Estimation and approximation error bounds}
In this section, we propose error bounds for the empirical operator estimator
$
\widehat{\mathcal{T}}_{\lambda,N} : L^{2}( \overline{D};\mu)\to \mathcal{H}_K
$
defined in~\eqref{empirical operator estimator} with respect to the target solution operator
$
\mathcal{T} : h\mapsto u
$ given by the PDE \eqref{pde}.
A standard approach in this setup is to decompose the {\bfi reconstruction error} $\widehat{\mathcal{T}}_{\lambda,N}-\mathcal{T} $ as the sum of what we shall be calling the {\bfi estimation} and {\bfi approximation errors}. 
\begin{align*} 
\widehat{\mathcal{T}}_{\lambda,N}-\mathcal{T}=\underbrace{\widehat{\mathcal{T}}_{\lambda,N}-\mathcal{T}_{\lambda}}_{\text{Estimation error}}\quad+\underbrace{\mathcal{T}_{\lambda}-\mathcal{T}}_{\text{Approximation error}},
\end{align*}
where $\mathcal{T}_{\lambda}:L^{2}( \overline{D};\mu)\to \mathcal{H}_K$ is the statistical solution operator estimator given by \eqref{statistical solution operator estimator}.
Given the kernel framework we work on, we restrict the domain of $\mathcal{T}$ to
\begin{align}\label{input domain}
\mathcal{F}
:= \bigl\{ h \in L^2( \overline{D};\mu)\ \big|\ \mathcal{T}h \in \mathcal{H}_K \bigr\}.
\end{align}
By Remark~\ref{Uniqueness}, we know that for any $h \in L^2(\overline{D};\mu)$, the estimator $\widehat{\mathcal{T}}_{\lambda,N}h$ lies in $\mathcal{H}_{\text{null}}^\bot$. 
This motivates the introduction of the {\bfi effective domain}
\begin{align}\label{effective domain}
\mathcal{F}_{\text{eff}}
:= \bigl\{ h \in L^2(\overline{D};\mu)\ \big|\ \mathcal{T}h \in \mathcal{H}_{\text{null}}^\bot \bigr\}.
\end{align}
The space $\mathcal{F}_{\text{eff}}$ is the largest subset of $L^2(\overline{D};\mu)$ on which the operator $\mathcal{T}$ is accessible by the estimator $\widehat{\mathcal{T}}_{\lambda,N}$. 

In the study of the approximation error, a standard assumption is that the solution satisfies a so-called source condition \cite{lu2019nonparametric,feng2024learning} that restricts the space of potential solutions of the PDE that needs to be solved and allows us to study the convergence properties of the solution estimator $\widehat{\mathcal{T}}_{\lambda, N}$. In order to state the source condition,  let $\gamma \in (0,1)$ and $S > 0$ and define the {\bfi source-induced input spaces} as 
\begin{align}\label{source space}
\mathcal{F}_S^\gamma
:= \bigl\{ h \in L^2(\overline{D};\mu)\ \big|\ \mathcal{T}h \in  \Omega_S^\gamma \bigr\}
\subset \mathcal{F},   
\end{align}
where 
\begin{align}\label{sou-con}
\Omega_S^\gamma
:= \bigl\{ u \in \mathcal{H}_K \,\big|\, u = B_s^\gamma \psi,\ \psi \in \mathcal{H}_K,\ \|\psi\|_{\mathcal{H}_K} < S \bigr\}.
\end{align}
The {\bfi source condition} consists of assuming that the solutions of the PDE that needs to be solved lie in $\Omega_S^\gamma $ or equivalently, that the domain of ${\cal T}$ is $\mathcal{F}_S^\gamma $.

\begin{remark}[Density and interpretation of source-induced spaces]
Since $B_s$ is bounded, self-adjoint, and positive semidefinite by Proposition~\ref{Wel-Ope}, its fractional powers $B_s^\gamma$ are well defined for $\gamma\in(0,1)$ and satisfy $\ker(B_s^\gamma)=\ker(B_s)=\mathcal H_{\mathrm{null}}$.
Hence,
\[
\overline{\bigcup_{S>0}\Omega_S^\gamma}
=\overline{\operatorname{Ran}(B_s^\gamma)}=(\ker B_s^\gamma)^\perp
=\mathcal H_{\mathrm{null}}^\perp.
\]

Through the operator $\mathcal T$, this induces a corresponding family of source-induced input spaces $\mathcal F_S^\gamma=\mathcal T^{-1}(\Omega_S^\gamma)$.
If $\mathcal T:\mathcal F_{\mathrm{eff}}\to \mathcal H_{\mathrm{null}}^\perp$ admits a continuous inverse on its range, then the density property above transfers to the input space, so that
\[
\overline{\bigcup_{S>0}\mathcal F_S^\gamma}
=
\mathcal F_{\mathrm{eff}}.
\]

This suggests that the source condition provides a regularity decomposition of the effective domain.
\end{remark}

\noindent{\bf Approximation error.}\quad 
By Proposition~\ref{Wel-Ope}, the operator $B_s = P_s^* P_s$ is positive and self-adoint on $\mathcal{H}_K$. Let $B_s = \sum_{n=1}^L \lambda_n \langle \cdot, e_n \rangle_{\mathcal{H}_K} e_n
\quad (\text{with } L \in \mathbb{N} \cup \{\infty\})$
be its spectral decomposition, where $0< \lambda_{n+1} \le \lambda_n$ and $\{e_n\}_{n=1}^L$ is an orthonormal basis of $\mathcal{H}_K$ (available for any continuous kernel \cite{steinwart2008support}).
For each $h \in \mathcal{F}_S^\gamma$ we have
\begin{align*}
\|\mathcal{T}_{\lambda} h - \mathcal{T} h\|_{\mathcal{H}_K}^2
&= \|(B_s + \lambda I)^{-1} B_s \mathcal{T}h - \mathcal{T}h\|_{\mathcal{H}_K}^2 = \|\lambda (B_s + \lambda I)^{-1} \mathcal{T}h\|_{\mathcal{H}_K}^2 \\
&= \sum_{n=1}^L \left(\frac{\lambda}{\lambda_n + \lambda}\right)^2\bigl|\langle \mathcal{T}h, e_n \rangle_{\mathcal{H}_K}\bigr|^2 \le \sum_{n=1}^L \left(\frac{\lambda}{\lambda_n}\right)^{2\gamma}
     \bigl|\langle \mathcal{T}h, e_n \rangle_{\mathcal{H}_K}\bigr|^2 \\
&= \lambda^{2\gamma} \sum_{n=1}^L \lambda_n^{-2\gamma}
     \bigl|\langle \mathcal{T}h, e_n \rangle_{\mathcal{H}_K}\bigr|^2
 = \lambda^{2\gamma} \|B_s^{-\gamma} \mathcal{T}h\|_{\mathcal{H}_K}^2,
\end{align*}
where $B_s^{-\gamma} \mathcal{T}h$ denotes the pre-image of $\mathcal{T}h$ under $B_s^\gamma$ and the inequality follows from the concavity of $g(x) = x^\gamma$ on $[0,\infty)$. 
By the source condition~\eqref{sou-con}, for $h \in \mathcal{F}_S^\gamma$ we have $\mathcal{T}h \in  \Omega_S^\gamma$, hence $\|B_s^{-\gamma} \mathcal{T}h\|_{\mathcal{H}_K} \le S$. Consequently,
\begin{align}\label{app-err}
\|\mathcal{T}_{\lambda} - \mathcal{T}\|_{\mathcal{F}_S^\gamma}:=\sup_{h\in \mathcal{F}_S^\gamma} \|\mathcal{T}_{\lambda}h - \mathcal{T}h\|_{\mathcal{H}_K}
\le S \lambda^\gamma.
\end{align}

\noindent{\bf Pathwise convergence of the estimation error.}\quad
We now show that the empirical operator estimator $\widehat{\mathcal{T}}_{\lambda,N}$  converges to the statistical operator estimator $\mathcal{T}_\lambda$ for any function in the input domain \eqref{input domain}. In the next two results $C_{d+s}^d=\binom{d+s}{d} = \frac{(d+s)!}{d! \, s!} $ is the binomial coefficient, $\kappa^2=\|K\|_{C_b^{2s}(\overline{D} \times \overline{D})}$, and $C$ is a uniform bound for the coefficients in the operator $P_s $ (see condition under \eqref{diff-operator}).

By Corollary~\ref{Rep-Ope}, it follows that for each
$h \in L^2(\overline{D};\mu)$, the estimators
$\widehat{\mathcal{T}}_{\lambda,N} h$ and $\mathcal{T}_{\lambda} h$
are the respective minimizers of~\eqref{emp-fun} and~\eqref{exp-fun}. 
This allows us to employ the operator decomposition technique developed in \cite{RCSP2}, while extending the analysis from Hamiltonian operators to general differential operators. 

\begin{proposition}[Pathwise estimation error bounds]\label{Sam-Err} For any function $h \in \mathcal{F}$ and $0< \delta <1$, with probability at least $1-\delta$, it holds that
\footnotesize
\begin{align*}
\left\|\widehat{\mathcal{T}}_{\lambda,N}h-\mathcal{T}_{\lambda}h\right\|_{\mathcal{H}_K}  
\leq\left(
\sqrt{\frac{8\log(2/\delta)}{N}} + 1
\right)
\sqrt{\frac{2\log(2/\delta)}{N\lambda^2}}\,
(C_{d+s}^d\, C\, \kappa)^2\,  \|\mathcal{T}h\|_{\mathcal{H}_K}\left(1+\frac{C_{s+d}^d\kappa C}{\sqrt{\lambda}}\right).
\end{align*} 
\normalfont
\end{proposition}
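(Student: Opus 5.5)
Set $u:=\mathcal{T}h\in\mathcal{H}_K$. Since $h\in\mathcal{F}$, $h=P_s u$ $\mu$-almost everywhere, and $Y_N=\pi_N h=P_s u(X_N)$ almost surely. Hence $P_s^*h=B_s u$. With the normalization of \eqref{empirical operator estimator} read so that the right-hand side is the empirical counterpart $P_{s,N}^*P_{s,N}u=B_{s,N}u$, we obtain
\[
\widehat{\mathcal{T}}_{\lambda,N}h=(B_{s,N}+\lambda I)^{-1}B_{s,N}u,\qquad \mathcal{T}_\lambda h=(B_s+\lambda I)^{-1}B_s u .
\]
Following the operator decomposition technique of \cite{RCSP2}, we write
\begin{align*}
\widehat{\mathcal{T}}_{\lambda,N}h-\mathcal{T}_\lambda h
&=(B_{s,N}+\lambda I)^{-1}(B_{s,N}-B_s)u \\
&\quad+\bigl[(B_{s,N}+\lambda I)^{-1}-(B_s+\lambda I)^{-1}\bigr]B_s u,
\end{align*}
and we use the resolvent identity
\[
(B_{s,N}+\lambda I)^{-1}-(B_s+\lambda I)^{-1}=(B_{s,N}+\lambda I)^{-1}(B_s-B_{s,N})(B_s+\lambda I)^{-1}.
\]
The bounds $\|(B_{s,N}+\lambda I)^{-1}\|\le\lambda^{-1}$ (Proposition~\ref{Wel-Emp}) and $\|(B_s+\lambda I)^{-1}B_s\|\le 1$ then give
\[
\|\widehat{\mathcal{T}}_{\lambda,N}h-\mathcal{T}_\lambda h\|_{\mathcal{H}_K}\le \frac{2}{\lambda}\,\|(B_{s,N}-B_s)u\|_{\mathcal{H}_K}.
\]
A sharper treatment of the second term uses $\|(B_s+\lambda I)^{-1}B_s^{1/2}\|\le \lambda^{-1/2}$ together with $\|B_s^{1/2}\|\le C_{d+s}^dC\kappa$. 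This is what produces the factor $1+C_{s+d}^d\kappa C/\sqrt{\lambda}$ in the statement, so we split the error into these two pieces rather than crudely bounding both by $\lambda^{-1}$.

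The main step is concentration of $(B_{s,N}-B_s)u$. By \eqref{emp-ope} this equals $\frac1N\sum_{i=1}^N\xi_i-\mathbb{E}\xi$ with i.i.d.\ $\mathcal{H}_K$-valued variables
\[
\xi_i:=P_su(X^{(i)})\,P_s^{(1,0)}K(X^{(i)},\cdot).
\]
By the differential reproducing property, which is the estimate already used in Propositions~\ref{Wel-Ope} and~\ref{Wel-Emp}, $|P_su(x)|\le C_{d+s}^dC\kappa\|u\|_{\mathcal{H}_K}$ and $\|P_s^{(1,0)}K(x,\cdot)\|_{\mathcal{H}_K}\le C_{d+s}^dC\kappa$. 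So $\|\xi_i\|\le M:=(C_{d+s}^dC\kappa)^2\|u\|_{\mathcal{H}_K}$, and the variance is bounded by $M^2$. A Bernstein/Pinelis inequality for Hilbert-space-valued bounded random variables then gives, with probability at least $1-\delta$,
\[
\|(B_{s,N}-B_s)u\|\le \Bigl(\sqrt{\tfrac{8\log(2/\delta)}{N}}+1\Bigr)\sqrt{\tfrac{2\log(2/\delta)}{N}}\,M,
\]
which has the form of the prefactor in the statement. Inserting this into the decomposition yields the claimed bound. Both pieces are controlled by the same vector $(B_{s,N}-B_s)u$, so a single probability event suffices and no union bound is needed.

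The hardest part will be matching the exact constants. The form $(\sqrt{8\log(2/\delta)/N}+1)\sqrt{2\log(2/\delta)/N}$ suggests that the authors used the version of Bernstein's inequality from \cite{RCSP2}, namely a deviation of order $\frac{2M\log(2/\delta)}{N}+\sqrt{\frac{2\sigma^2\log(2/\delta)}{N}}$ with $\sigma\le M$, regrouped. I would check that regrouping first. I would also need to track carefully how the normalization $1/\sqrt N$ in $\widehat{\mathcal{T}}_{\lambda,N}$ interacts with $P_{s,N}^*$, so that $\widehat{\mathcal{T}}_{\lambda,N}h$ is indeed $(B_{s,N}+\lambda I)^{-1}B_{s,N}u$.
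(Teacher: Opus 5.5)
Your decomposition is the paper's. The piece $\lambda^{-1}\|(B_{s,N}-B_s)u\|_{\mathcal H_K}$ is exactly the paper's first term $\lambda^{-1}\|\tfrac{1}{\sqrt N}P_{s,N}^*\pi_Nh-P_s^*h\|_{\mathcal H_K}$. The second term is $(B_{s,N}+\lambda I)^{-1}(B_s-B_{s,N})\,u_\lambda^*$ with $u_\lambda^*:=(B_s+\lambda I)^{-1}B_su$, and the gap is in how you bound it.

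The operator $(B_s+\lambda I)^{-1}B_s$ is applied \emph{before} $B_s-B_{s,N}$, and the two do not commute because $B_{s,N}$ does not commute with $B_s$. So $\|(B_s+\lambda I)^{-1}B_s\|\le 1$ gives only $\|u_\lambda^*\|_{\mathcal H_K}\le\|u\|_{\mathcal H_K}$. It does not give $\|(B_s-B_{s,N})u_\lambda^*\|_{\mathcal H_K}\le\|(B_s-B_{s,N})u\|_{\mathcal H_K}$. Your deterministic inequality $\|\widehat{\mathcal T}_{\lambda,N}h-\mathcal T_\lambda h\|_{\mathcal H_K}\le 2\lambda^{-1}\|(B_{s,N}-B_s)u\|_{\mathcal H_K}$ therefore cannot hold in general. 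It would force $\widehat{\mathcal T}_{\lambda,N}h=\mathcal T_\lambda h$ whenever $B_{s,N}u=B_su$, yet in that case the difference is still $(B_{s,N}+\lambda I)^{-1}(B_s-B_{s,N})u_\lambda^*$, which need not vanish since $u_\lambda^*\neq u$. Your ``sharper'' variant with $(B_s+\lambda I)^{-1}B_s^{1/2}$ makes the same commutation. So the claim that a single probability event suffices is unfounded.

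The missing step, which is what the paper does, is a second concentration argument. The vector $u_\lambda^*$ depends on $h$ but not on the sample. The same Hilbert-space Bernstein bound therefore applies to the i.i.d. variables $P_su_\lambda^*(X^{(i)})\,P_s^{(1,0)}K(X^{(i)},\cdot)$, with $M$ replaced by $(C_{d+s}^dC\kappa)^2\|u_\lambda^*\|_{\mathcal H_K}$. You then use $\|u_\lambda^*\|_{\mathcal H_K}\le\lambda^{-1/2}C_{d+s}^dC\kappa\|u\|_{\mathcal H_K}$. The paper gets this from $R_\lambda(u_\lambda^*)<R_\lambda(0)$; your spectral estimate also gives it, even with an extra factor $1/2$. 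Finally, take a union bound over the two events, each at level $\delta/2$.

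An operator-norm bound on $\|B_{s,N}-B_s\|$ would control both terms on one event. However, it brings in the effective-rank factor of the paper's uniform lemma rather than the stated constants.

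Two side remarks:
\begin{itemize}
\item The $\lambda^{-1/2}$ estimate is not sharper than $\|u_\lambda^*\|_{\mathcal H_K}\le\|u\|_{\mathcal H_K}$; it is worse once $\lambda<(C_{d+s}^dC\kappa)^2$. The factor $1+C_{s+d}^d\kappa C/\sqrt{\lambda}$ in the statement is simply a valid, looser choice.
\item The union bound honestly produces $\log(4/\delta)$. The paper keeps $\log(2/\delta)$ after splitting $\delta$ into $\delta/2$, so reproducing the exact displayed constant is not a reasonable target.
\end{itemize}
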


\subsection{Uniform convergence of the total reconstruction error}\label{Uniform Convergence of Total Reconstruction Error}

Notice that the approximation error~\eqref{app-err} is uniformly controlled over the source-induced input spaces. 
However, such uniform control does not directly extend to the estimation error via Proposition~\ref{Sam-Err}, since the corresponding high-probability events depend on the specific choice of $h \in \mathcal{F}$. 

The key step is to reinterpret the stochastic error in operator form. 
For each $n=1,\dots,N$, define
\[
\xi^{(n)}(h) := h(X^{(n)})\, P_s^{(1,0)} K(X^{(n)},\cdot),
\]
which are independent and bounded for each fixed $h\in\mathcal{F}$.
Let $u:=\mathcal{T}h$, so that $h=P_s u$. Then this representation induces a sequence of random linear operators
\[
\xi^{(n)}:\mathcal{H}_K\to\mathcal{H}_K,\qquad
\xi^{(n)}(u)
= \langle u, P_s^{(1,0)}K(X^{(n)},\cdot)\rangle_{\mathcal{H}_K}\, P_s^{(1,0)} K(X^{(n)},\cdot),
\]
which are independent, self-adjoint, and uniformly bounded in operator norm.

Applying Bernstein's inequality \cite{minsker2017some} for self-adjoint operators to the centered random operators
$\xi^{(n)} - \mathbb{E}\xi^{(n)}$, we obtain the following uniform error bound. 
The proof follows from a standard application of this Bernstein's inequality and is deferred to the supplementary material.

\begin{proposition}[Uniform estimation error bounds]
\label{prop:uniform_estimation}
Let $\gamma\in(0,1)$, $S>0$, and $\lambda>0$. Then, for any $\delta\in(0,1)$, with probability at least $1-\delta$, we have
\[
\|\widehat{\mathcal T}_{\lambda,N}-\mathcal T_\lambda\|_{\mathcal F_S^\gamma}
\le
\frac{2S}{\lambda}\,
(C_{d+s}^dC\kappa)^{2\gamma+2}
\left(
\sqrt{\frac{8L_\delta}{N}}
+
\frac{4L_\delta}{3N}
\right),
\]
where  $L_\delta:=\log\frac{14r_N}{\delta}$ and $r_N:=r\left(\sum_{n=1}^N\mathbb E\bigl[(Z^{(n)})^2\bigr]\right)$. Here $Z^{(n)} := \xi^{(n)} - \mathbb{E}\xi^{(n)}$ are the centered random operators 
defined in the proof, and $r(A) := \mathrm{tr}(A)/\|A\|$ 
denotes the effective rank.
\end{proposition}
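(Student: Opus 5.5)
We reduce the uniform bound to a single operator-norm estimate by writing the estimation error as an operator acting on $u=\mathcal{T}h$. For $h\in\mathcal F_S^\gamma$ put $u:=\mathcal T h\in\Omega_S^\gamma$, so $h=P_su$ $\mu$-a.s. and also at the sample points. Then $P_s^*h=B_su$ and $\frac{1}{\sqrt N}P_{s,N}^*\pi_N h=B_{s,N}u$, by the explicit forms \eqref{adjoint} and \eqref{emp-adjoint}. Hence
\[
\widehat{\mathcal T}_{\lambda,N}h-\mathcal T_\lambda h=(B_{s,N}+\lambda I)^{-1}B_{s,N}u-(B_s+\lambda I)^{-1}B_su .
\]
Using $(A+\lambda)^{-1}A=I-\lambda(A+\lambda)^{-1}$ and the resolvent identity, this equals
\[
\lambda(B_{s,N}+\lambda I)^{-1}(B_{s,N}-B_s)(B_s+\lambda I)^{-1}u .
\]
We now take norms factor by factor. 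Since $B_{s,N}$ is positive semidefinite (Proposition \ref{Wel-Emp}), $\|\lambda(B_{s,N}+\lambda I)^{-1}\|\le 1$. The source condition gives $u=B_s^\gamma\psi$ with $\|\psi\|<S$, so $\|(B_s+\lambda I)^{-1}B_s^\gamma\psi\|\le \|B_s^\gamma\|\,S/\lambda$. Proposition \ref{Wel-Ope} bounds $\|B_s\|\le\operatorname{Tr}(B_s)\le(C_{d+s}^dC\kappa)^2$, so $\|B_s^\gamma\|\le (C_{d+s}^dC\kappa)^{2\gamma}$. Altogether,
\[
\sup_{h\in\mathcal F_S^\gamma}\|\widehat{\mathcal T}_{\lambda,N}h-\mathcal T_\lambda h\|_{\mathcal H_K}\le \frac{S}{\lambda}(C_{d+s}^dC\kappa)^{2\gamma}\,\|B_{s,N}-B_s\|.
\]
The right-hand side no longer depends on $h$, which is what makes the bound uniform. (We may instead keep $B_s^\gamma$ attached to the difference; this does not change the structure.)

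It remains to bound $\|B_{s,N}-B_s\|$ with high probability. By the formula for $\xi^{(n)}$ in the text, $B_{s,N}=\frac1N\sum_n\xi^{(n)}$ with $\xi^{(n)}=k_n\otimes k_n$, where $k_n:=P_s^{(1,0)}K(X^{(n)},\cdot)$. Also $\mathbb E\xi^{(n)}=B_s$ by \eqref{positive}, so $B_{s,N}-B_s=\frac1N\sum_nZ^{(n)}$. We need two inputs for Bernstein's inequality.
\begin{itemize}
\item Uniform bound: by the differential reproducing property, $\|k_n\|_{\mathcal H_K}^2=P_s^{(1,1)}K(X^{(n)},X^{(n)})\le (C_{d+s}^dC\kappa)^2=:M$. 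Hence $\|\xi^{(n)}\|\le M$ and $\|Z^{(n)}\|\le 2M$.
\item Variance: $\mathbb E (Z^{(n)})^2\preceq \mathbb E(\xi^{(n)})^2\preceq M\,B_s$, so $\sigma^2:=\|\sum_n\mathbb E(Z^{(n)})^2\|\le NM^2$.
\end{itemize}
Minsker's dimension-free Bernstein inequality for self-adjoint operators \cite{minsker2017some} then gives, for $t\ge\sigma+ \frac{2M}{3}$,
\[
P\Big(\Big\|\sum_n Z^{(n)}\Big\|\ge t\Big)\le 14\,r_N\exp\Big(-\frac{t^2/2}{\sigma^2+2Mt/3}\Big).
\]
Setting the right-hand side equal to $\delta$ and solving the quadratic in $t$ yields
\[
t\le \sqrt{2\sigma^2L_\delta}+\tfrac{4M}{3}L_\delta .
\]
Dividing by $N$, with probability at least $1-\delta$,
\[
\|B_{s,N}-B_s\|\le 2M\Big(\sqrt{\frac{8L_\delta}{N}}+\frac{4L_\delta}{3N}\Big).
\]
The constants are absorbed generously; we may need to adjust them slightly to match the statement. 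Substituting this into the previous display gives $\frac{2S}{\lambda}(C_{d+s}^dC\kappa)^{2\gamma+2}(\cdots)$, which is the claimed bound.

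The main obstacle is applying Minsker's inequality correctly in infinite dimension. First, we must check that $\sum_n\mathbb E(Z^{(n)})^2$ is trace class, so that the effective rank $r_N$ is finite. This follows because $\mathbb E(\xi^{(n)})^2\preceq MB_s$ and $B_s$ is trace class by Proposition \ref{Wel-Ope}. Second, the inequality must be used in its one-sided form for $\lambda_{\max}$, applied to both $\pm\sum_n Z^{(n)}$, which accounts for the constant $14$ rather than $7$. Third, we must track the threshold condition $t\ge\sigma+2M/3$. We handle it by noting that $L_\delta\ge\log 14>1$ makes the chosen $t$ automatically exceed that threshold.
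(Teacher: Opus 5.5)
Your proposal is correct and follows essentially the same route as the paper: you rewrite the error as an operator acting on $u=\mathcal T h$ and bound it by a constant multiple of $\frac{S}{\lambda}\|B_s\|^{\gamma}\|B_{s,N}-B_s\|$, so the only random quantity is independent of $h$, and you then control $\|B_{s,N}-B_s\|$ with Minsker's Bernstein inequality for the centered operators $Z^{(n)}$. The differences are cosmetic. The paper keeps the two terms $(B_{s,N}+\lambda I)^{-1}(B_{s,N}-B_s)u$ and $(B_{s,N}+\lambda I)^{-1}(B_s-B_{s,N})(B_s+\lambda I)^{-1}B_su$ separate, which is where its factor $2$ comes from, and it uses the cruder variance proxy $4N(C_{d+s}^dC\kappa)^4$ to get $\|B_{s,N}-B_s\|\le (C_{d+s}^dC\kappa)^2(\sqrt{8L_\delta/N}+4L_\delta/(3N))$. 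You instead merge the terms into $\lambda(B_{s,N}+\lambda I)^{-1}(B_{s,N}-B_s)(B_s+\lambda I)^{-1}u$, and the factor $2$ reappears only through your deliberately loose concentration constant, so your argument actually yields a slightly sharper bound.
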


The approximation error is uniformly small over $\mathcal{F}_S^\gamma$ whenever the regularization parameter $\lambda$ is small. To link this to the sample size $N$ and obtain an explicit convergence rate, we choose a data-dependent regularization parameter of the form
\begin{align}\label{dyn-sca}
\lambda \propto N^{-\alpha}, \qquad \alpha > 0,
\end{align}
meaning that $\lambda$ is of order $N^{-\alpha}$ as $N \to \infty$.
Combining~\eqref{dyn-sca} with the bound~\eqref{app-err}, we see that the approximation error decays at rate
\begin{align}\label{approximation error}
\|\mathcal{T}_{\lambda} - \mathcal{T}\|_{\mathcal{F}_S^\gamma}
\;\leq \; S N^{-\alpha \gamma}, \quad \text{as}\quad N \to \infty.
\end{align}

Finally, by combining this approximation error bound with the uniform estimation error bounds in Proposition~\ref{prop:uniform_estimation}, we obtain uniform bounds on the reconstruction error for the empirical operator estimator~\eqref{empirical operator estimator}.
\begin{theorem}[{\bf Convergence upper rate of the total reconstruction error}]
\label{Tot-Rec-Err}
Let $\widehat{\mathcal{T}}_{\lambda,N}$ be the operator estimator defined in \eqref{empirical operator estimator}. Assume the regularization parameter satisfies \eqref{dyn-sca}. Then for all $\alpha\in(0,\frac{1}{2})$, and for any $0<\delta<1$, with probability as least $1-\delta$, it holds uniformly that
\begin{align*}
\|\widehat{\mathcal{T}}_{\lambda,N} - \mathcal{T}\|_{\mathcal{F}_S^\gamma}
:= \sup_{h\in \mathcal{F}_S^\gamma} 
\|\widehat{\mathcal{T}}_{\lambda,N}h - \mathcal{T}h\|_{\mathcal{H}_K}
\;\leq\; 
C(d,\gamma,\delta,S)N^{-\min\left\{\alpha\gamma, \tfrac{1}{2}(1-2\alpha)\right\}},
\end{align*}
where $\mathcal{F}_S^\gamma$ denotes the source space defined in~\eqref{source space} for $\gamma \in (0,1)$ and $S>0$. The constant $C$ is given by
\[
C(d,\gamma,\delta,S)
= \max\left\{
S,\;
2\sqrt{8\log(7r_N/\delta)}\,
\bigl(C_{d+s}^d\, C\, \kappa\bigr)^{2\gamma+2}\, S
\right\},
\]
where $r_N$ is specified in Proposition~\ref{prop:uniform_estimation}.
\end{theorem}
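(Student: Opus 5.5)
The plan is to combine the two uniform bounds already available through the triangle inequality, applied to the reconstruction error decomposition $\widehat{\mathcal{T}}_{\lambda,N}-\mathcal{T}=(\widehat{\mathcal{T}}_{\lambda,N}-\mathcal{T}_\lambda)+(\mathcal{T}_\lambda-\mathcal{T})$. Taking the supremum over $h\in\mathcal{F}_S^\gamma$ of each summand separately gives
\[
\|\widehat{\mathcal{T}}_{\lambda,N}-\mathcal{T}\|_{\mathcal{F}_S^\gamma}\le \|\widehat{\mathcal{T}}_{\lambda,N}-\mathcal{T}_\lambda\|_{\mathcal{F}_S^\gamma}+\|\mathcal{T}_\lambda-\mathcal{T}\|_{\mathcal{F}_S^\gamma}.
\]
The second term is deterministic. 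By \eqref{app-err} together with the choice \eqref{dyn-sca}, I will normalise the proportionality constant to one, i.e.\ $\lambda=N^{-\alpha}$. This bounds the second term by $SN^{-\alpha\gamma}$, as recorded in \eqref{approximation error}.

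For the first term I invoke Proposition~\ref{prop:uniform_estimation} at confidence level $\delta$, which holds on a single event uniformly over $\mathcal{F}_S^\gamma$. Substituting $\lambda=N^{-\alpha}$ gives
\[
\frac{2S}{\lambda}(C_{d+s}^dC\kappa)^{2\gamma+2}\Bigl(\sqrt{8L_\delta/N}+\tfrac{4L_\delta}{3N}\Bigr)=2S(C_{d+s}^dC\kappa)^{2\gamma+2}\Bigl(\sqrt{8L_\delta}\,N^{\alpha-1/2}+\tfrac{4}{3}L_\delta N^{\alpha-1}\Bigr).
\]
The condition $\alpha<\tfrac12$ is exactly what makes the leading term $N^{-(1-2\alpha)/2}$ decay. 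The second term, $N^{\alpha-1}$, is of strictly lower order than the first.

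The main obstacle is bookkeeping of constants. The stated constant involves $\log(7r_N/\delta)$ rather than $L_\delta=\log(14r_N/\delta)$, and it contains only the square-root term. So I must absorb the $\tfrac43 L_\delta N^{\alpha-1}$ contribution into the leading one. For $N$ large enough that $\tfrac{4}{3}L_\delta N^{-1/2}\le c\sqrt{8L_\delta}$, the two terms combine into a multiple of $\sqrt{8L_\delta}\,N^{\alpha-1/2}$. The factor-of-two discrepancy inside the logarithm I would try to resolve by rechecking the confidence split in the proof of Proposition~\ref{prop:uniform_estimation}; failing that, I would adjust the constant accordingly. Since the theorem is asymptotic ("as $N\to\infty$" in \eqref{dyn-sca}), stating it for sufficiently large $N$ is acceptable.

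Finally, I bound the sum by twice the maximum of the two terms:
\[
SN^{-\alpha\gamma}+2\sqrt{8\log(7r_N/\delta)}(C_{d+s}^dC\kappa)^{2\gamma+2}S\,N^{-\frac12(1-2\alpha)}\le C(d,\gamma,\delta,S)\,N^{-\min\{\alpha\gamma,\frac12(1-2\alpha)\}}.
\]
This uses $N^{-a}+N^{-b}\le 2N^{-\min\{a,b\}}$, with any harmless factor of $2$ absorbed into $C$ as defined. The result holds with probability at least $1-\delta$, since the only random ingredient is the event of Proposition~\ref{prop:uniform_estimation}.
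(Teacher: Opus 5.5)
Your argument is the paper's argument. The paper gives no proof of this theorem beyond combining the deterministic bound $\|\mathcal{T}_\lambda-\mathcal{T}\|_{\mathcal{F}_S^\gamma}\le S\lambda^\gamma$ with Proposition~\ref{prop:uniform_estimation} at $\lambda=N^{-\alpha}$. You rightly note that the proposition holds on a single event for all $h\in\mathcal{F}_S^\gamma$, because the randomness enters only through $\|B_{s,N}-B_s\|$. The rate $N^{-\min\{\alpha\gamma,\frac12(1-2\alpha)\}}$ therefore follows.

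Your write-up fails only in the final matching with the explicit constant $C(d,\gamma,\delta,S)$, and no bookkeeping will fix it. There are three problems.
\begin{enumerate}
\item $SN^{-a}+AN^{-b}\le\max\{S,A\}N^{-\min\{a,b\}}$ is false in general. At the admissible choice $\alpha=\frac{1}{2(1+\gamma)}\in(\tfrac14,\tfrac12)$ the two exponents coincide, and the inequality would read $S+A\le\max\{S,A\}$. Since $C$ is defined as a maximum, a factor $2$ cannot be ``absorbed into $C$ as defined''.
\item $\log(7r_N/\delta)<\log(14r_N/\delta)=L_\delta$, so the stated constant is smaller than what the proposition delivers. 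Rechecking a confidence split will not help. The bound on $\|B_{s,N}-B_s\|$ at level $\delta$ already comes from one application of Minsker's inequality with prefactor $14$, so there is no split to undo. Splitting $\delta$ would only enlarge the logarithm.
\item Discarding the $\frac{4L_\delta}{3N}$ term costs a further factor.
\end{enumerate}
These defects are in the statement itself; the paper's one-line combination has the same problem. They do not reflect a flaw in your method, but your fallback of adjusting the constant is what is actually required.

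So state what you actually prove. Write $N^{\alpha-1}=N^{-1/2}N^{\alpha-1/2}$ and use $N^{-a}\le N^{-\min\{a,b\}}$ for $N\ge1$. This gives, with probability at least $1-\delta$ and for every $N\ge 1$,
\[
\|\widehat{\mathcal{T}}_{\lambda,N}-\mathcal{T}\|_{\mathcal{F}_S^\gamma}
\le\Bigl(S+2S\bigl(C_{d+s}^dC\kappa\bigr)^{2\gamma+2}\bigl(\sqrt{8L_\delta}+\tfrac{4}{3}L_\delta N^{-1/2}\bigr)\Bigr)N^{-\min\{\alpha\gamma,\frac12(1-2\alpha)\}}.
\]
This needs no ``$N$ sufficiently large'' proviso. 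The prefactor is a genuine constant: $\sum_n\mathbb{E}(Z^{(n)})^2=N\,\mathbb{E}(Z^{(1)})^2$ and the effective rank is scale invariant, so $r_N$ does not depend on $N$. This is the theorem with a slightly larger constant, and you should present it that way rather than claim the literal $C(d,\gamma,\delta,S)$.
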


\section{Numerical experiments}\label{Numerical Experiments}

We now present several numerical examples to evaluate the performance of the proposed method. 
We begin with a general numerical scheme for solving the linear boundary value problem \eqref{pde}. 
In Section~\ref{Darcy Flow: Formulation and Experiment}, we apply the method to the Darcy flow problem, and in Section~\ref{A Comparison with Neural Operator Learning}, we compare our approach with the operator method proposed in \cite{stepaniants2023learning}. Additional numerical results can be found in the supplementary material.

\noindent{\bf Experimental setup.}\quad 
We uniformly sample $N_1$ points in the domain $D$ and $N_2$ points on the boundary $\partial D$, and denote the total number of samples by $N = N_1 + N_2$. 
Throughout the experiments, we use the Gaussian kernel
\begin{align*}
K_{\eta}(x,y)
=
\exp\!\left(-\frac{\|x-y\|^2}{\eta^2}\right),
\qquad
x,y\in\mathbb{R}^d.
\end{align*}
Given the Gaussian kernel $K_{\eta}$ and the sample points $\{X_i\}_{i=1}^N$, where $X^{(i)}\in D$ for $1\leq i\leq N_1$ and $X^{(i)}\in \partial D$ for $N_1+1\leq i\leq N$, we define the kernel basis as
\begin{align*}
(\psi_1,\cdots,\psi_N)
:=
\big(P_s^{(1,1)}K_\eta(X_N,X_N) + \lambda N I\big)^{-1}
P_s^{(1,0)}K_\eta(X_N,\cdot).
\end{align*}

\noindent{\bf Testing procedure.}\quad 
To assess the accuracy of the method, we generate a family of test functions $\{u_k\}_{k=1}^{M}$, which serve as exact solutions to the underlying linear PDE \eqref{pde} (see Sections~\ref{Darcy Flow: Formulation and Experiment} and~\ref{A Comparison with Neural Operator Learning} for details). 
For each $u_k$, we compute the corresponding source term $f_k$ in $D$ and boundary data $g_k$ on $\partial D$ from the governing equation. 
Given $(f_k,g_k)$, the estimator $\widehat{u}_{k,\lambda,N}$ is obtained using the kernel basis expansion:
\begin{align*}
\widehat{u}_{k,\lambda,N}
=
\sum_{i=1}^{N_1} f_k\big(X^{(i)}\big)\,\psi_i
+
\sum_{i=N_1+1}^{N} g_k\big(X^{(i)}\big)\,\psi_i.
\end{align*}

\noindent{\bf Error metrics.}\quad 
The performance of the estimator is quantified by the relative $L^2$ and $L^\infty$ errors, averaged over the test set:
\begin{align}\label{relative error}
S_{2}=
\frac{1}{M}\sum_{k=1}^{M}
\frac{\|\widehat{u}_{k,\lambda,N}-u_k\|_{L^2}}
{\|u_k\|_{L^2}},\quad
S_{\infty}=
\frac{1}{M}\sum_{k=1}^{M}
\frac{\|\widehat{u}_{k,\lambda,N}-u_k\|_{L^\infty}}
{\|u_k\|_{L^\infty}}.
\end{align}

\subsection{Darcy flow: formulation and experiment} \label{Darcy Flow: Formulation and Experiment}

We consider the two-dimensional Darcy flow problem \cite{gilbarg1998elliptic}, formulated as the following elliptic boundary value problem:
\[
\begin{cases}
-\nabla \cdot (a \nabla u) = f, \quad & \text{in } D = (0,1)^2, \\
u = g, \quad & \text{on } \partial D,
\end{cases}
\]
where $u$ denotes the piezometric head, $a$ is a function that determines the permeability of the porous medium, $f$ represents sources and sinks of the fluid, and $g$ specifies the Dirichlet boundary condition.

We take the setting in which the permeability $a$ is fixed and investigate the associated input--output mapping $(f,g) \mapsto u$. In the numerical experiments, we test three different permeability functions:
\begin{align}\label{permeability function}
a_1 = 1, \quad
a_2(x,y) = \exp(x+y), \quad
a_3(x,y) =
\begin{cases}
1, & \lfloor 4x \rfloor + \lfloor 4y \rfloor \text{ is even}, \\
10, & \text{otherwise}.
\end{cases}
\end{align}

\noindent{\bf Training and Testing.}\quad We uniformly sample $N_1 = 2500$ points in $D$ and $N_2 = 1500$ points on $\partial D$. The Gaussian kernel bandwidth is set to $\eta = 1$, and the regularization parameter is chosen as $\lambda = 5 \times 10^{-5}$.
To evaluate the performance of the proposed method, we randomly generate neural networks $\{u_k\}_{k=1}^{M}$ defined on $\overline{D}$ with $M=50$, which serve as ground-truth solutions to the Darcy flow problem. Then we compute the relative $L^2$ and $L^\infty$ errors defined in \eqref{relative error} for different permeability functions in \eqref{permeability function}. In addition, for the case $a_1 = 1$, we compute the solution for discontinuous input functions: $f(x,y) = \begin{cases}
    6,\quad x\geq0.5\\
    4,\quad x<0.5
\end{cases}$ and $g(x,y) = \begin{cases}
(x-0.5)^2+y^2,\quad x\geq 0.5\\
2(x-0.5)^2+y^2,\quad x<0.5
\end{cases}$. 

\begin{table}[htbp]
\centering
\caption{Numerical performance for different permeability functions}\label{Solution operator performance}
\vspace{-0.5em}
\begin{tabular}{c|c|c|c}
\hline
Case & Cost & Relative $L^2$ error & Relative $L^\infty$ error \\ 
\hline
$a_1$ & 1.547s & 2.531$\times 10^{-3}$ & 5.083$\times 10^{-3}$ \\ 
\hline
$a_2$ & 1.519s & 4.454$\times 10^{-3}$ & 8.143$\times 10^{-3}$ \\ 
\hline
$a_3$ & 1.605s & 6.629$\times 10^{-3}$ & 8.272$\times 10^{-3}$ \\ 
\hline
\end{tabular}
\end{table}

\noindent{\bf Results.}\quad 
The relative $L^2$ and $L^\infty$ errors for different permeability functions are reported in Table~\ref{Solution operator performance}. 
Overall, the proposed method achieves consistently low approximation errors across all test cases, demonstrating robustness with respect to variations in the permeability field. 
As expected, the errors remain on the order of $10^{-3}$, indicating stable and accurate performance. 
Moreover, the computational time is consistently below 2 seconds, highlighting the efficiency of the proposed approach and its favorable scalability with respect to the heterogeneity of the coefficient. 
Figures \ref{case2} illustrate the performance of the method for discontinuous input functions. It shows that the predicted solution closely matches the ground truth, with small and structured errors, demonstrating robustness and accuracy across varying levels of regularity in the data.

\begin{figure}[htp]
\centering

\begin{minipage}{0.32\textwidth}
\centering
\includegraphics[width=\textwidth]{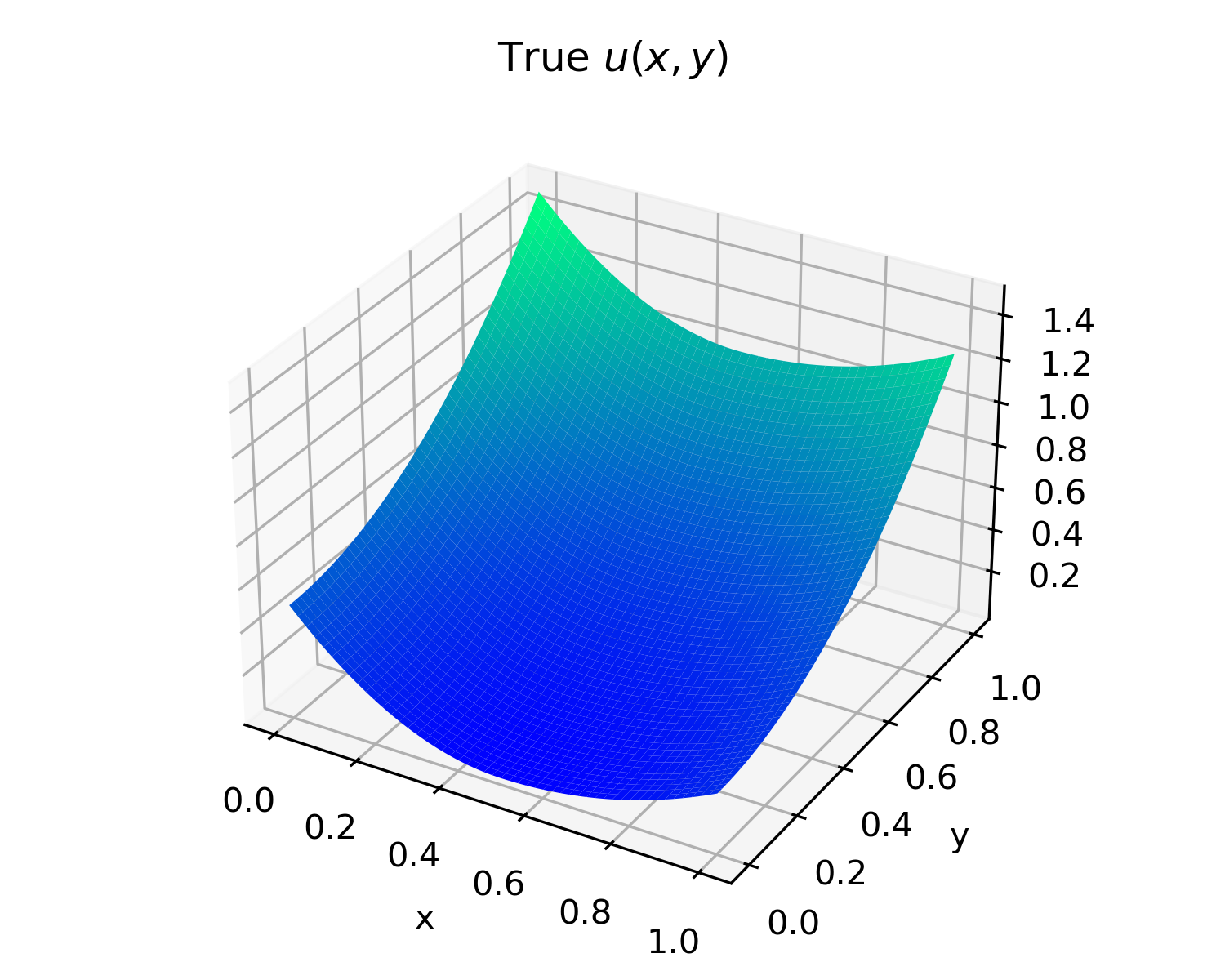}
(a) true solution
\end{minipage}
\hfill
\begin{minipage}{0.32\textwidth}
\centering
\includegraphics[width=\textwidth]{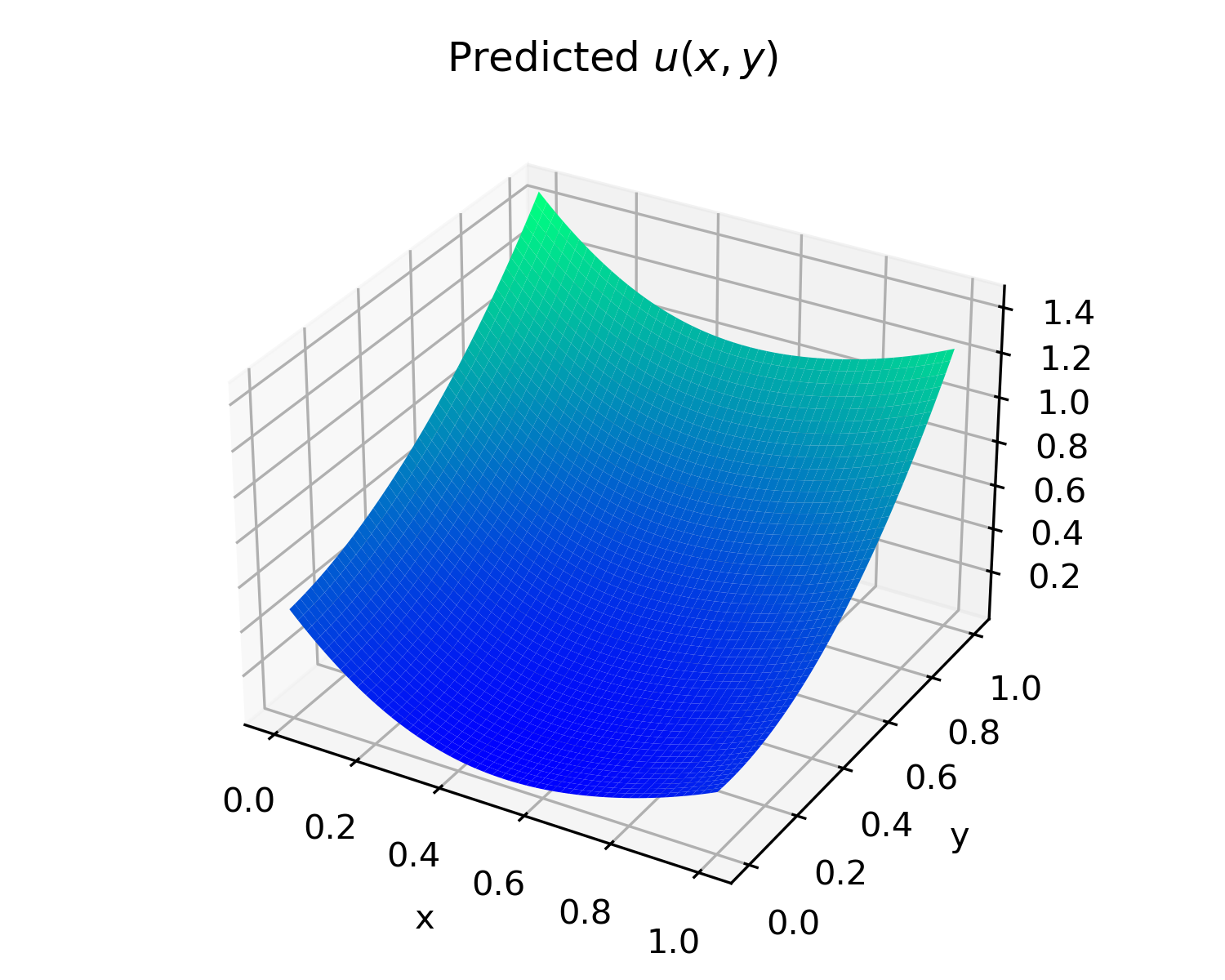}
(b) predicted solution
\end{minipage}
\hfill
\begin{minipage}{0.32\textwidth}
\centering
\includegraphics[width=\textwidth]{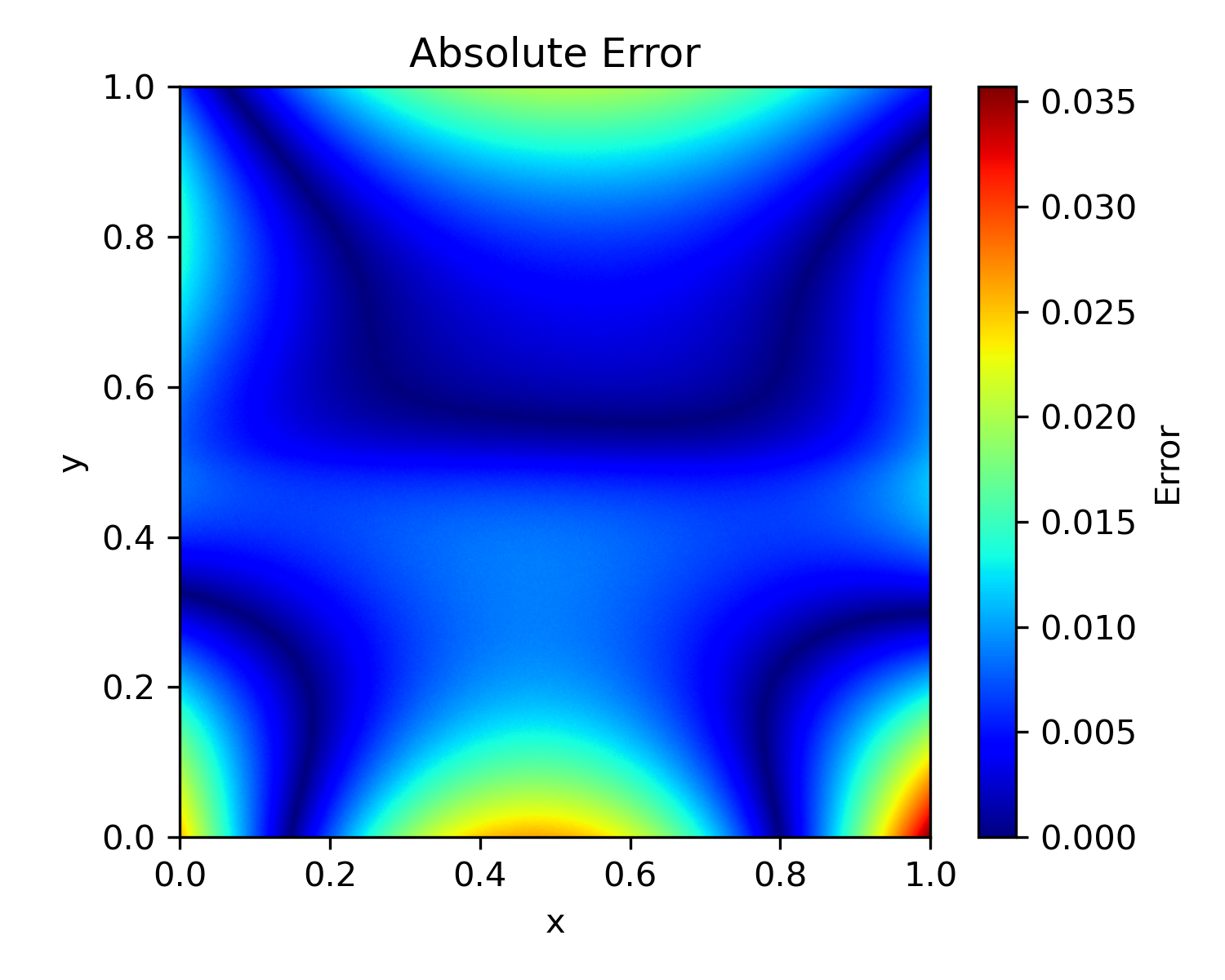}
(c) absolute error
\end{minipage}

\caption{
Input functions:
$f(x,y)=
\begin{cases}
6, & x \geq 0.5,\\
4, & x < 0.5,
\end{cases}
\quad
g(x,y)=
\begin{cases}
(x-0.5)^2 + y^2, & x \geq 0.5,\\
2(x-0.5)^2 + y^2, & x < 0.5.
\end{cases}$.
}
\label{case2}
\end{figure}

\subsection{Comparison with Green Operator Learning: Helmholtz Equation}
\label{A Comparison with Neural Operator Learning}

In this subsection, we compare the proposed approach with the Green operator learning method introduced in \cite{stepaniants2023learning}. 
To illustrate the comparison, we consider the one-dimensional Helmholtz equation
\begin{align}\label{Helmholtz equation}
-\Delta u - \omega^2 u = f, \quad \text{in } D = (0,1),
\end{align}
subject to Dirichlet boundary conditions $u(0) = -0.1$ and $u(1) = 0.1$. 
We investigate both low and high-frequency regimes, with $\omega = 20$ and $\omega = 200$. 
The objective is to learn the solution operator $f \mapsto u$.

The method in \cite{stepaniants2023learning} aims to learn the Green operator from paired input--output data. 
For a fair comparison, we evaluate both approaches in terms of relative $L^2$ and $L^\infty$ errors, as well as computational cost, thereby assessing accuracy, efficiency, and generalization performance.

\noindent{\bf Training and testing.}\quad
For the Green operator learning method with $\omega = 20$, we follow the setup in \cite{stepaniants2023learning}, including data generation and model configuration, and train for 500 epochs. 
For $\omega = 200$, we adjust the hyperparameters to stabilize training: the kernel bandwidth is reduced to $0.005$, the regularization parameter is increased to $10^{-4}$, and the learning rate is set to $10^{-3}$.
For the proposed method, we uniformly sample $N = 1000$ points in $(0,1)$. 
The Gaussian kernel bandwidth is set to $\eta = 0.1$ for $\omega = 20$ and $\eta = 0.01$ for $\omega = 200$, with regularization parameter $\lambda = 10^{-7}$.

For evaluation, we construct the test set (high-frequency functions) of size $M = 100$ as follows: 
\[
u_k(x) = -0.1 + 0.2x + x(1-x)\sin(\omega_k x + \phi),
\]
where $\phi \sim \mathrm{Unif}(0,2\pi)$ and 
$\omega_k = \omega(0.7 + 0.6U)$ with $U \sim \mathrm{Unif}(0,1)$. Here $\omega=20$ or $\omega=200$ is the parameter in \eqref{Helmholtz equation}.

\begin{table}[htbp]
\centering
\caption{Comparison of numerical performance for the Helmholtz equation.}
\label{Comparison1}
\begin{tabular}{c c l c c}
\hline
Case & Method & Cost & Relative $L^2$ error & Relative $L^\infty$ error \\ 
\hline
$\omega=20$ & Ours & 1.533 s & $4.278\times 10^{-3}$ & $8.525\times 10^{-3}$ \\ 
            & Green Operator & 0.721 h & $2.084\times 10^{-2}$ & $4.120\times 10^{-2}$ \\ 
\hline
$\omega=200$ & Ours & 1.670 s & $1.610\times 10^{-3}$ & $1.609\times 10^{-3}$ \\ 
             & Green Operator & 0.745 h & $9.138\times 10^{-1}$ & $9.603\times 10^{-1}$ \\ 
\hline
\end{tabular}
\end{table}

\noindent{\bf Results.}\quad 
Table~\ref{Comparison1} reports averaged relative errors over high-frequency test functions, and Figure~\ref{w200} provides representative examples. In the low-frequency regime ($\omega=20$), both methods qualitatively capture the solution, but KO achieves significantly higher accuracy, with errors on the order of $10^{-3}$ compared to $10^{-2}$ for the Green Operator method. In addition, our method requires only seconds, whereas the Green Operator method takes approximately $0.7$ hours.
But for the high-frequency regime ($\omega=200$), the performance gap becomes pronounced. Our method maintains errors at the $10^{-3}$ level, whereas the Green Operator method deteriorates severely, with errors approaching $1$. Figure~\ref{w200} shows that the Green Operator method fails to resolve high-frequency structures, even when the training error is small, whereas our method remains accurate. Overall, the proposed kernel operator method provides a fast, accurate, and robust framework, with clear advantages in high-frequency regimes.

\begin{figure}[htbp]
\centering
\begin{minipage}[t]{0.48\textwidth}
    \centering
    \includegraphics[width=\linewidth]{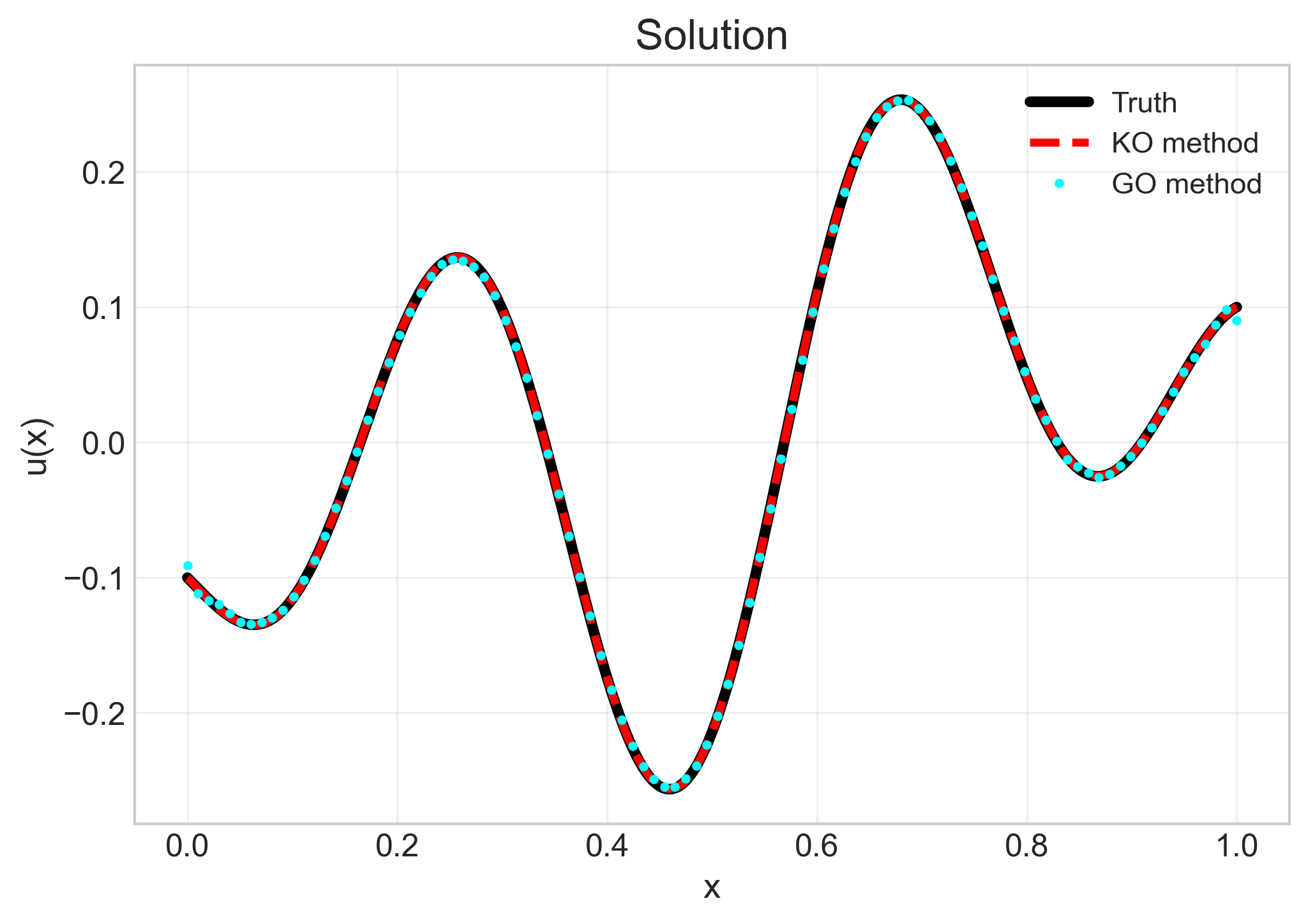}
    \vspace{-2pt}
    {\small (a) $\omega=20 $}
\end{minipage}
\hfill
\begin{minipage}[t]{0.48\textwidth}
    \centering
    \includegraphics[width=\linewidth]{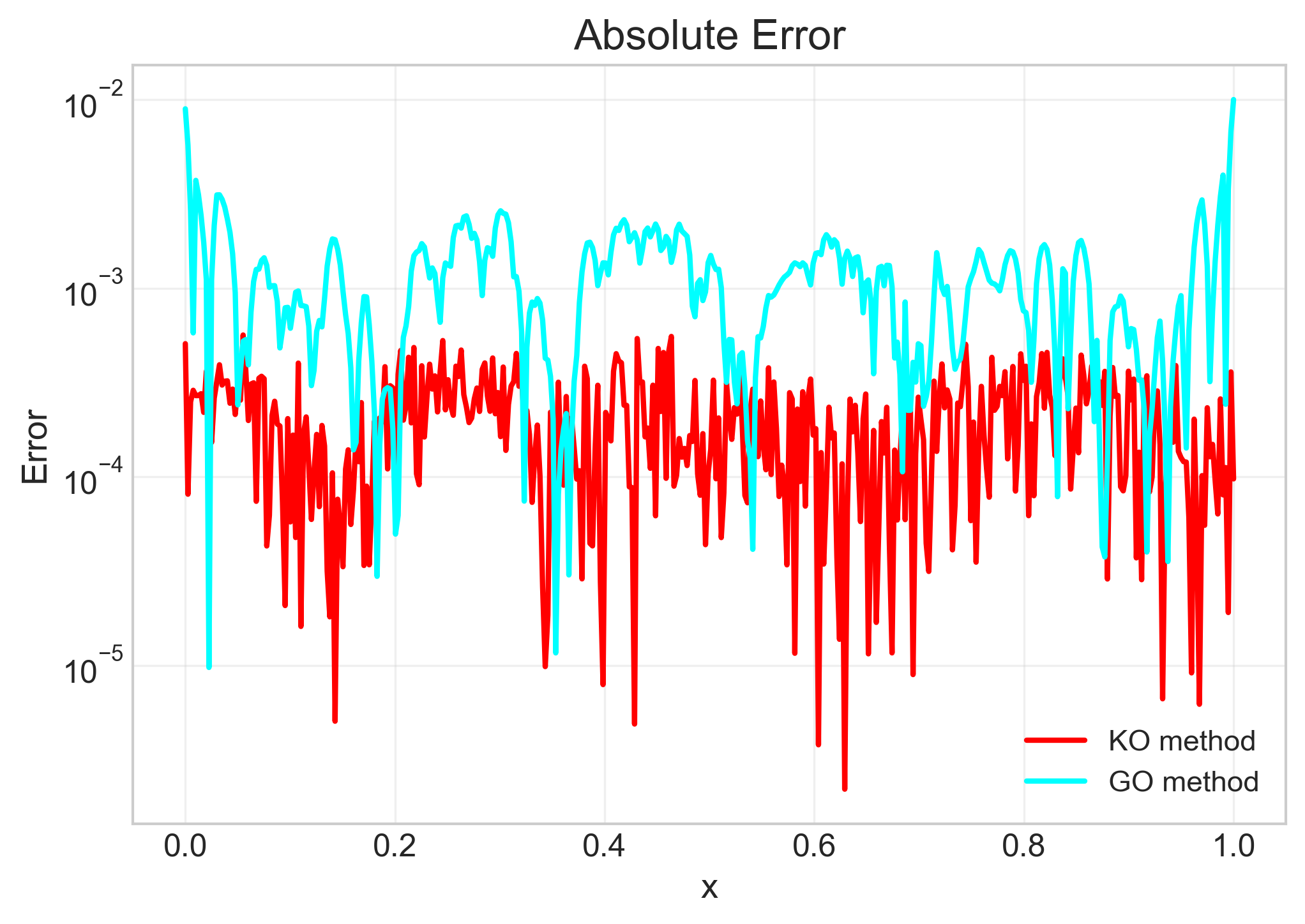}
    \vspace{-2pt}
    {\small (b) $\omega=20 $}
\end{minipage}
\vspace{6pt}
\begin{minipage}[t]{0.48\textwidth}
    \centering
    \includegraphics[width=\linewidth]{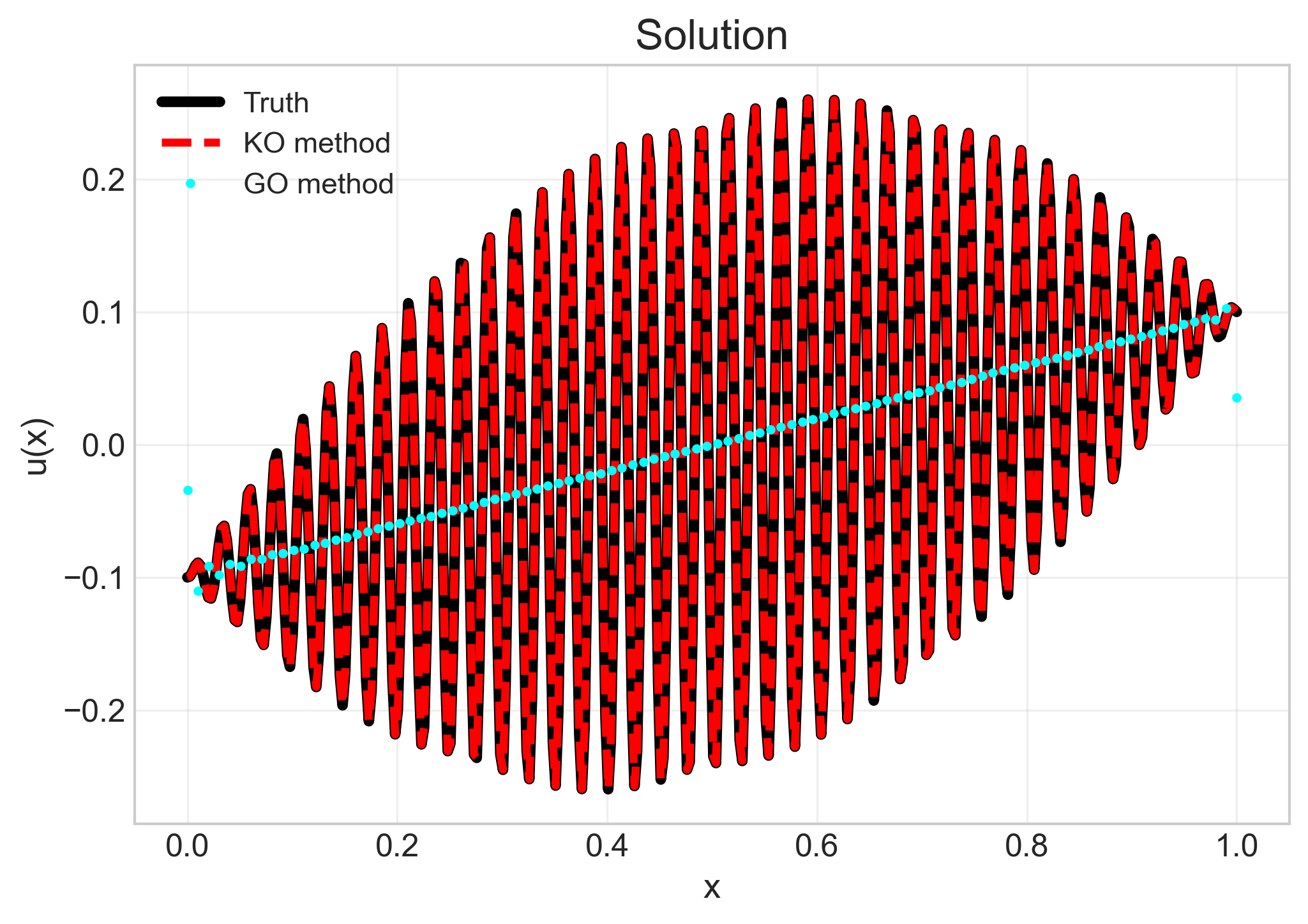}
    \vspace{-2pt}
    {\small (c) $\omega=200 $}
\end{minipage}
\hfill
\begin{minipage}[t]{0.48\textwidth}
    \centering
    \includegraphics[width=\linewidth]{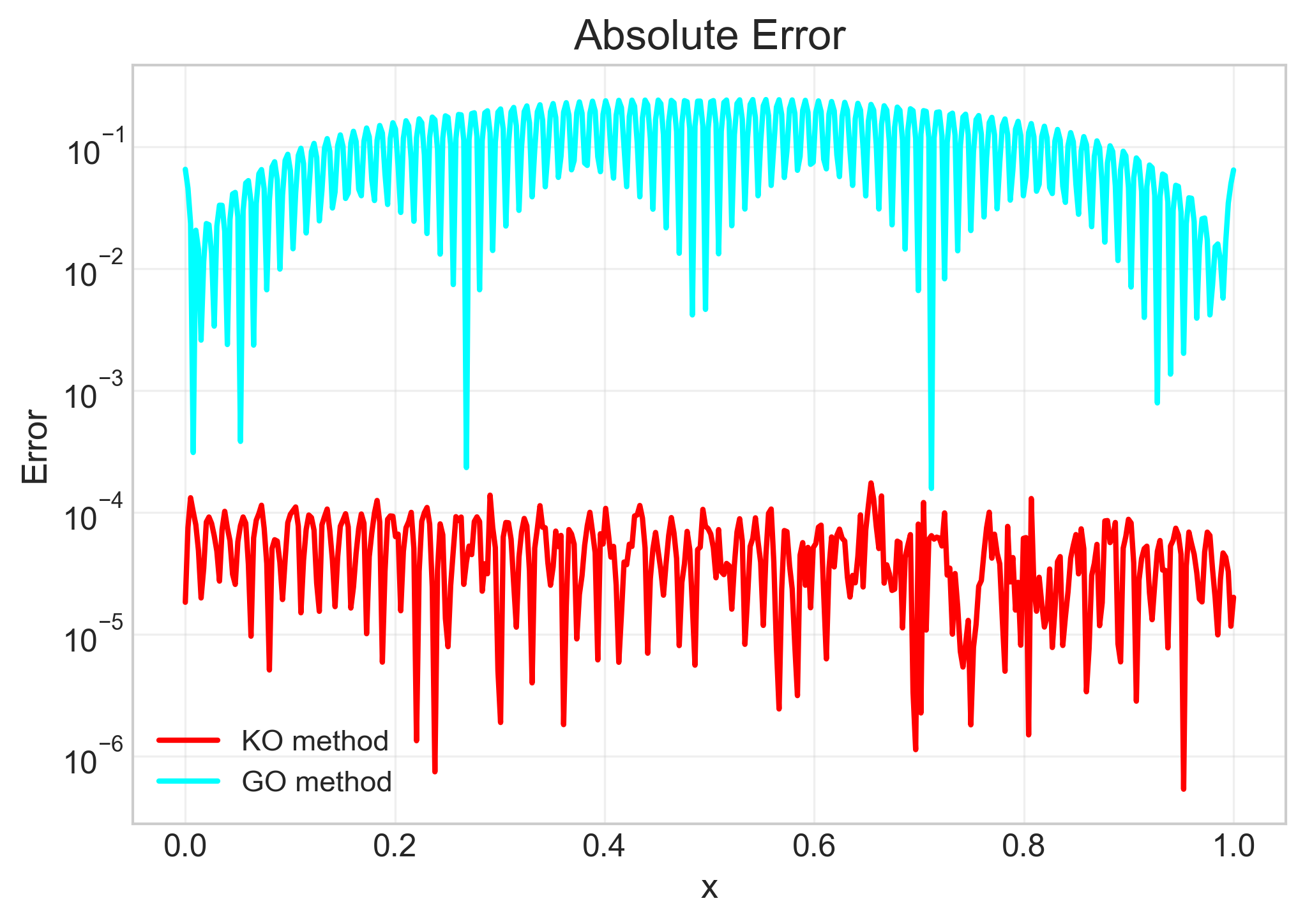}
    \vspace{-2pt}
    {\small (d) $\omega=200 $}
\end{minipage}
\caption{Helmholtz equation.
(a,c) True solutions with Kernel Operator (KO) and Green Operator (GO) approximations for $\omega=20 $ and $\omega=200$, respectively;
(b,d) corresponding absolute errors.}
\label{w200}
\end{figure}

\section{Conclusions}
\label{sec:conclusions}

In this paper, we have introduced a physics-informed kernel framework for learning the solution operator of general linear boundary value problems of the form~\eqref{pde}. By modeling the solution in a reproducing kernel Hilbert space associated with a universal kernel and incorporating the differential operator $\mathcal{L}_s$ and the boundary operator $\mathcal{B}$ directly into a regularized empirical risk, we obtained a learning problem whose unique minimizer admits a closed-form expression through a generalized representer theorem. A distinctive feature of this formulation is that the resulting estimator is independent of the input function $h$, which allows us to lift the classical kernel regression point of view from the approximation of individual PDE solutions to the construction of an operator-based solver $\widehat{\mathcal{T}}_{\lambda,N}\colon L^2(\overline{D};\mu)\to\mathcal{H}_K$. 

We developed a full error analysis by decomposing the total reconstruction error into estimation and approximation components. High-probability pathwise bounds were established for the estimation error, and uniform approximation error bounds of order $\mathcal{O}(\lambda^{\gamma})$ were derived on source spaces $\mathcal{F}^{\gamma}_{S}$ that were shown to form a dense partition of the effective domain $\mathcal{F}_{\mathrm{eff}}$. Combining these results with a data-dependent regularization rule $\lambda\sim N^{-\alpha}$ yielded the uniform convergence rate $N^{-\min\{\alpha\gamma,\,(1-2\alpha)/2\}}$, providing, to the best of our knowledge, the first complete statistical characterization of a kernel-based operator learner that does not rely on paired input--output functional data.

Numerical experiments on the Darcy flow problem and the Helmholtz equation confirm the practical relevance of these theoretical guarantees. In direct comparison with the Green operator learning approach of \cite{stepaniants2023learning} on the Helmholtz benchmark, the kernel-based operator achieved substantially smaller errors at a fraction of the computational cost and, crucially, did not suffer from the generalization gap typically exhibited by purely data-driven operator learning schemes. This behavior is consistent with our analysis: because the estimator inherits the structure of the governing equation through the operator $P_s$, it enables systematic extrapolation beyond any fixed training distribution of input functions.

Several directions remain open for future investigation. The extension of the framework to nonlinear PDEs through structure-preserving kernels in the spirit of \cite{RCSP2,RCSP3,hu2025kernel}, and to equations posed on manifolds along the lines of \cite{fuselier2012scattered} are natural next steps. It would also be of interest to combine the online update scheme of Remark~\ref{Online Regression with Kernels} with randomized low-rank techniques in order to further improve scalability, and to sharpen the convergence rate under stronger source conditions or additional spectral assumptions on $B_s$. We believe that the operator-theoretic perspective developed here offers a principled and computationally attractive bridge between classical kernel methods and modern operator learning, and opens the door to a broader class of physics-informed, structure-preserving surrogate solvers for parametric PDE problems arising in science and engineering.

\section*{Acknowledgments}
The authors thank Lyudmila Grigoryeva and Daiying Yin for helpful discussions and remarks and acknowledge financial support from the School of Physical and Mathematical Sciences of the Nanyang Technological University. 

\bibliographystyle{siamplain}
\bibliography{references}
\end{document}


\maketitle

\section{Detailed proofs of selected propositions}\label{Detailed proofs of propositions and lemmas}
In this section, we provide detailed proofs of various propositions stated in the main text that mimic existing results in the literature and that we gather in this supplement for the sake of completeness. In the following, $C_{d+s}^d=\binom{d+s}{d} = \frac{(d+s)!}{d! \, s!} $ is the binomial coefficient, $\kappa^2=\|K\|_{C_b^{2s}(\overline{D} \times \overline{D})}$, and $C$ is a uniform bound for the coefficients in the linear differential operator $P_s$.

\begin{proof}[Proof of Proposition \ref{main-Wel-Ope}]
By the differential reproducing property \cite[Theorem 2.7]{RCSP2}, the regularity condition $K\in C_b^{2s+1}(\overline{D}\times\overline{D})$ implies that the corresponding RKHS $\mathcal{H}_K\subseteq C_b^s(\mathbb{R}^{d})$ and that the operator $P_s$ is well-defined as a map $P_s: {\mathcal H}_K \longrightarrow L^2(\overline{D};\mu)$. 
Indeed, for any  $h\in \mathcal{H}_K$, the differential reproducing property \cite[Theorem 2.7]{RCSP2} implies that
{\small
\begin{equation*}
\begin{aligned}
\|P_s h\|^2_{L^2(\mu)} &=\int_{\mathbb{R}^{d}}\bigg|\sum_{\alpha \in I_s} \phi_{\alpha} D^{\alpha} h(x)\bigg|^2\mathrm{d}\mu(x) =  \sum_{\alpha \in I_s}\sum_{\beta \in I_s}\int_{\mathbb{R}^{d}}\phi_\alpha(x)\phi_\beta(x)  D^{\alpha} h(x)D^{\beta} h(x)\mathrm{d}\mu(x)\\
&\leq C^2 \sum_{\alpha \in I_s}\sum_{\beta \in I_s}\|h\|_{C_b^{|\alpha|}}\;\|h\|_{C_b^{|\beta|}}  \leq (C_{s+d}^dC\kappa)^2\|h\|_{\mathcal{H}_K}^2,
\end{aligned}    
\end{equation*}}
which shows that $P_s: {\mathcal H}_K \longrightarrow L^2(\overline{D};\mu)$ is a bounded linear operator and that $\|P_s\|\leq C_{s+d}^d ~ C\kappa$.

Next, we prove \eqref{main-adjoint}. For any
$h \in \mathcal{H}_K$ and any $g\in L^2(\overline{D};\mu)$,
{\small
\begin{multline*}
\langle P_s h,g\rangle_{L^2(\mu)}  = \sum_{\alpha \in I_s}\int_{\mathbb{R}^{d}}  \phi_\alpha(x)D^\alpha h(x)g(x)  \mathrm{d} \mu(x)\\
=\sum_{\alpha \in I_s}\int_{\mathbb{R}^{d}}  \phi_\alpha(x)\left\langle h, (D^\alpha K)_x\right\rangle_{\mathcal{H}_K}g(x)  \mathrm{d} \mu(x)
=\bigg\langle h,\sum_{\alpha \in I_s}\int_{\mathbb{R}^{d}}  \phi_\alpha(x) (D^\alpha K)_xg(x)  \mathrm{d} \mu(x)\bigg\rangle_{\mathcal{H}_K}\\
=\left\langle h, \int_{\mathbb{R}^{d}} g(x) P_s^{(1,0)}K(x,\cdot) \, \mathrm{d} \mu(x) \right\rangle_{\mathcal{H}_K},
\end{multline*} }
where the second equality is due to the differential reproducing property \cite[Theorem 2.7]{RCSP2}. Since $g\in L^2(\overline{D};\mu)$ in the previous equality is arbitrary, we have hence shown that \eqref{main-adjoint} holds. 
Since $B_s=P_s^{\ast}P_s$, $B_s$ is clearly a bounded linear operator. Equation \eqref{main-positive} follows from \eqref{main-adjoint} by direct calculation and the fact that the integral commutes with the scalar product. 

We now prove that $B_s$ is a trace class operator, that is, we show that $\operatorname{Tr}(|B_s|)<\infty$, where $|B_s|=\sqrt{B_s^* B_s}$. Since $B$ is positive semidefinite, we have that $|B_s|=B_s$. Therefore, it is equivalent to show that $\operatorname{Tr}(B_s)<\infty$. In order to do that, we choose a spanning orthonormal set $\left\{e _n\right\} _{n \in \mathbb{N}}$ for ${\mathcal H} _K$ whose existence is guaranteed by \cite[Lemma A.3]{RCSP2} and \cite[Theorem 2.4]{owhadi2017separability}. Then,
$$
\begin{aligned}
\operatorname{Tr}(B_s)&=\operatorname{Tr}\left(P_s^* P_s\right) =\sum_n\left\langle P_s^* P_s e_n, e_n\right\rangle_{\mathcal{H}_K}=\sum_n\left\langle P_s e_n, P_s e_n\right\rangle_{L^2\left(\mu\right)} \\
& =\sum_n\int_{\mathbb{R}^{d}}\bigg(\sum_{\alpha \in I_s} \phi_{\alpha}(x) D^{\alpha} e_n(x)\bigg)\bigg(\sum_{|\beta|\leq s} \phi_{\beta}(x) D^{\beta} e_n(x)\bigg)\mathrm{d}\mu(x) \\
& =\sum_{\alpha \in I_s} \sum_{|\beta|\leq s}\int_{\mathbb{R}^{d}}\phi_{\alpha}(x) \phi_{\beta}(x) \sum_n\Big(D^{\alpha} e_n(x)  D^{\beta} e_n(x)\Big)\mathrm{d}\mu(x)\\
& = \sum_{\alpha \in I_s} \sum_{|\beta|\leq s}\int_{\mathbb{R}^{d}}\phi_{\alpha}(x) \phi_{\beta}(x) D^{(\alpha,\beta)}K(x,x)\mathrm{d}\mu(x) \leq (C_{d+s}^d)^2C^2\kappa^2,
\end{aligned}
$$
where the sixth equality is due to 
\begin{align*}
\sum_n &D^\alpha e_n(x)D^\beta e_n(x)=
\sum_n D^\alpha e_n(x)\langle(D^\beta K)_x,e_n\rangle_{\mathcal{H}_K}=\sum_n \langle (D^\beta K)_x,D^\alpha e_n(x)e_n\rangle_{\mathcal{H}_K}\\
&= \Big\langle (D^\beta K)_x,\sum_n D^\alpha e_n(x)e_n\Big\rangle_{\mathcal{H}_K} = \Big\langle (D^\beta K)_x,\sum_n \langle(D^\alpha K)_x,e_n\rangle_{\mathcal{H}_K}e_n\Big\rangle_{\mathcal{H}_K}\\
&=\langle (D^\beta K)_x, (D^\alpha K)_x\rangle_{\mathcal{H}_K}=D^{(\alpha,\beta)}K(x,x).    
\end{align*}
Finally, the form of the operator $B_s=P_s^{*}P_s$ automatically guarantees that it is positive semidefinite. The result follows.
\end{proof}

\begin{lemma}\label{Dec-Omp}
Let $K \in C_b^{2s+1}( \overline{D} \times  \overline{D})$ be a Mercer kernel. Then, for any $h \in \mathcal{F}$ and any $0 < \delta < 1$, with probability at least $1-\delta$,
\small
\begin{align}\label{bound BsN}
\left\|
\frac{1}{\sqrt{N}}P_{s,N}^* \pi_N h - P_s^* h
\right\|_{\mathcal{H}_K}
\;\le\;
\left(
\sqrt{\frac{8\log(2/\delta)}{N}} + 1
\right)
\sqrt{\frac{2\log(2/\delta)}{N}}\,
(C_{d+s}^d\, C\, \kappa)^2\,  \|\mathcal{T}h\|_{\mathcal{H}_K}.
\end{align}
\normalfont
\end{lemma}
\begin{proof}
Firstly, note that for any $h\in \mathcal{F}$, Proposition \ref{main-Wel-Ope} yields that
\begin{align*}
\|h\|_{\infty} = \|P_s\mathcal{T}h\|_{\infty}  \leq C_{d+s}^d C\kappa \|\mathcal{T}h\|_{\mathcal{H}_K}.
\end{align*}
Since $K \in C_b^{2s+1}( \overline{D} \times  \overline{D})$, the differential reproducing property \cite{RCSP2} yields
\small
\begin{multline*}
\|P_{s,N}^* \pi_N h\|_{\mathcal{H}_K}^2
= \frac{1}{N} \big\| h^\top(X_N)\, P_s^{(1,0)} K(X_N,\cdot) \big\|_{\mathcal{H}_K}^2
  = \frac{1}{N} h^\top(X_N)\, P_s^{(1,1)} K(X_N,X_N)\, h(X_N) \\
= \frac{1}{N} 
   \sum_{i=1}^N \sum_{j=1}^N \sum_{\alpha \in I_s} \sum_{\beta \in I_s}
   h(X^{(i)}) \phi_\alpha(X^{(i)})\,
   D^{(\alpha,\beta)} K(X^{(i)},X^{(j)})\,
   \phi_\beta(X^{(j)}) h(X^{(j)}) \\
\le N \bigl(C_{d+s}^d\bigr)^2 C^2 \kappa^2 \|h\|_{\infty}^2\le N \bigl(C_{d+s}^d\bigr)^4 C^4 \kappa^4 \|\mathcal{T}h\|_{\mathcal{H}_K}^2,
\end{multline*}
\normalsize
which shows that $\frac{1}{\sqrt{N}} P_{s,N}^* \pi_N h$ is a bounded $\mathcal{H}_K$-valued random variable. Moreover,
\[
\mathbb{E}\Bigl[\bigl\|\tfrac{1}{\sqrt{N}} P_{s,N}^* \pi_N h\bigr\|_{\mathcal{H}_K}^2\Bigr]
\;\le\; \bigl(C_{d+s}^d\bigr)^4 C^4 \kappa^4 \|\mathcal{T}h\|_{\mathcal{H}_K}^2.
\]
Define now the $\mathcal{H}_K$-valued random variables
\begin{align}\label{xi_h}
\xi^{(n)}(h) :=  h(X^{(n)})\, P_s^{(1,0)} K(X^{(n)},\cdot),
\quad n = 1,\dots,N.    
\end{align}
Then $\{\xi^{(n)}(h)\}_{n=1}^N$ are i.i.d., and
$\frac{1}{\sqrt{N}} P_{s,N}^* \pi_N h - P_s^* h
= \frac{1}{N} \sum_{n=1}^N \bigl(\xi^{(n)}(h) - \mathbb{E}[\xi^{(n)}(h)]\bigr)$.
The desired concentration bound \eqref{bound BsN} follows by applying, for instance, \cite[Lemma~8]{de2005learning} or the results in~\cite{yurinsky1995sums} to the sequence $\{\xi^{(n)}(h)\}_{n=1}^N$.
\end{proof}

\begin{proof}[Proof of Proposition \ref{main-Sam-Err}] For any $h\in \mathcal{F}$, we decompose 
\small
\begin{multline*}
\frac{1}{\sqrt{N}} (B_{s,N} + \lambda I)^{-1} P_{s,N}^{*} \pi_Nh-(B_{s}+\lambda I)^{-1}P_s^*h \\
=\frac{1}{\sqrt{N}} (B_{s,N} + \lambda I)^{-1} P_{s,N}^{*} \pi_Nh-(B_{s,N}+\lambda I)^{-1}P_s^*h + (B_{s,N}+\lambda I)^{-1}P_s^*h- (B_s+\lambda I)^{-1}P_s^*h.
\end{multline*}
\normalsize
Since the operator norm satisfies $\|(B_{s,N}+\lambda I)^{-1}\|\leq \frac{1}{\lambda}$, we have that 
\small
\begin{equation*}
\left\|\frac{1}{\sqrt{N}} (B_{s,N} + \lambda I)^{-1} P_{s,N}^{*} \pi_Nh-(B_{s,N}+\lambda I)^{-1}P_s^*h\right\|_{\mathcal{H}_K}\leq \frac{1}{\lambda} \left\|\frac{1}{\sqrt{N}}  P_{s,N}^{*} \pi_Nh-P_s^*h\right\|_{\mathcal{H}_K}.
\end{equation*}
\normalsize
Applying Lemma \ref{Dec-Omp} to $\frac{1}{\sqrt{N}}  P_{s,N}^{*} \pi_Nh-P_s^*h$, we obtain that, with probability at least $1-\delta/2$,
\begin{multline}\label{bounds1}
\left\|\frac{1}{\sqrt{N}} (B_{s,N} + \lambda I)^{-1} P_{s,N}^{*} \pi_Nh-(B_{s,N}+\lambda I)^{-1}P_s^*h\right\|_{\mathcal{H}_K}\\ \leq \left(
\sqrt{\frac{8\log(2/\delta)}{N}} + 1
\right)
\sqrt{\frac{2\log(2/\delta)}{N\lambda^2}}\,
(C_{d+s}^d\, C\, \kappa)^2\,  \|\mathcal{T}h\|_{\mathcal{H}_K}.
\end{multline}
On the other hand, we have that
\small
\begin{multline*}
\|(B_{s,N}+\lambda I)^{-1}P_s^*h - (B_s+\lambda I)^{-1}P_s^*h\|_{\mathcal{H}_K} = \|(B_{s,N}+\lambda I)^{-1}(B_s-B_{s,N})(B_s+\lambda I)^{-1}P_s^*h\|_{\mathcal{H}_K} \\
\leq \frac{1}{\lambda}\|(B_s-B_{s,N})(B_s+\lambda I)^{-1}P_s^*h\|_{\mathcal{H}_K}.
\end{multline*}
\normalsize
Since $u_{\lambda}^*=(B_s+\lambda I)^{-1}P_s^*h=(B_s+\lambda I)^{-1}B_s\mathcal{T}h$ is the unique minimizer of the regularized statistical risk $R _\lambda(h) =\|P_sh-P_s( \mathcal{T}h)\|^2_{L^2(\mu)}+\lambda\|h\|_{\mathcal{H}_K}^2,$ plugging $h=0$, we obtain that 
\begin{align*}
\|P_s{u_{\lambda}^*}-P_s(\mathcal{T}h)\|^2_{L^2(\mu)}+\lambda\|u_{\lambda}^*\|_{\mathcal{H}_K}^2 <\| P_s(\mathcal{T}h)\|^2_{L^2(\mu)}.
\end{align*}
Then by Proposition \ref{main-Wel-Ope}, we have
\begin{equation}\label{eq2}
\begin{aligned}
\|u_{\lambda}^*\|_{\mathcal{H}_K} <\frac{1}{\sqrt{\lambda}}\| P_s(\mathcal{T}h)\|_{L^2(\mu)}\leq \frac{C_{s+d}^d\kappa C}{\sqrt{\lambda}}\| \mathcal{T}h\|_{\mathcal{H}_K}.
\end{aligned}
\end{equation}
Applying Lemma \ref{Dec-Omp} to $u_{\lambda}^*=(B_s+\lambda I)^{-1}P_s^*h$ and combining it with equation \eqref{eq2}, we obtain that with probability at least $1-\delta/2$,
\begin{equation}\label{bounds2}
\begin{aligned}
\frac{1}{\lambda}\|(B_s-B_{s,N})u_{\lambda}^*\|_{\mathcal{H}_K}
& \leq \left(
\sqrt{\frac{8\log(2/\delta)}{N}} + 1
\right)
\sqrt{\frac{2\log(2/\delta)}{N\lambda^2}}\,
(C_{d+s}^d\, C\, \kappa)^2\,  \|u_{\lambda}^*\|_{\mathcal{H}_K}\\
&\leq \left(
\sqrt{\frac{8\log(2/\delta)}{N}} + 1
\right)
\sqrt{\frac{2\log(2/\delta)}{N\lambda^3}}\,
(C_{d+s}^d\, C\, \kappa)^3\,  \|\mathcal{T}h\|_{\mathcal{H}_K}.
\end{aligned}
\end{equation}
Finally, by combining the bounds \eqref{bounds1} and \eqref{bounds2}, we obtain that with a probability at least $1-\delta$,
\footnotesize
\begin{align*}
\left\|\widehat{\mathcal{T}}_{\lambda,N}h-\mathcal{T}_{\lambda}h\right\|_{\mathcal{H}_K}  
\leq\left(
\sqrt{\frac{8\log(2/\delta)}{N}} + 1
\right)
\sqrt{\frac{2\log(2/\delta)}{N\lambda^2}}\,
(C_{d+s}^d\, C\, \kappa)^2\,  \|\mathcal{T}h\|_{\mathcal{H}_K}\left(1+\frac{C_{s+d}^d\kappa C}{\sqrt{\lambda}}\right).
\end{align*} 
\normalsize
\end{proof}

The following lemma extends Lemma~\ref{Dec-Omp} to establish uniform control. 
\begin{lemma}\label{lemmasm}
Let $K \in C_b^{2s+1}( \overline{D} \times  \overline{D})$ be a Mercer kernel. Then, for any $0 < \delta < 1$, with probability at least $1-\delta$,
\[
\|B_{s,N}-B_s\|
\le
\bigl(C_{d+s}^d\bigr)^2C^2\kappa^2
\left(
\sqrt{\frac{8L_\delta}{N}}
+
\frac{4L_\delta}{3N}
\right),
\]
where $L_\delta:=\log\frac{14r_N}{\delta}$ and $r_N:=r\left(\sum_{n=1}^N\mathbb E\bigl[(Z^{(n)})^2\bigr]\right)$stands  for the effective rank.
\end{lemma}

\begin{proof} From~\eqref{xi_h}, this representation induces a sequence of  random linear operators
\[
\xi^{(n)}:\mathcal{H}_K\to\mathcal{H}_K,\qquad
\xi^{(n)}(u)
= \langle u, P_s^{(1,0)}K(X^{(n)},\cdot)\rangle_{\mathcal{H}_K}\, P_s^{(1,0)} K(X^{(n)},\cdot),
\]
for $u = \mathcal{T}h$. Moreover, these operators are uniformly bounded with operator norm $\|\xi^{(n)}\| \le \bigl(C_{d+s}^d\bigr)^2 C^2 \kappa^2$. Hence, $Z^{(n)}:=\xi^{(n)}-\mathbb{E}\xi^{(n)}$ is a sequence of independent self-adjoint random operators
such that $\mathbb{E}Z^{(n)}=0$, $\|Z^{(n)}\| \le 2\bigl(C_{d+s}^d\bigr)^2 C^2 \kappa^2$ and 
\begin{align*}
\left\|\sum_{n}^N\mathbb{E}(Z^{(n)})^2\right\| \le 4N\bigl(C_{d+s}^d\bigr)^4 C^4 \kappa^4.     
\end{align*}
Notice that $B_{s,N}-B_s=\frac{1}{N}\sum_{n=1}^NZ^{(n)}$. Then by \cite[Theorem 3.1]{minsker2017some}, we obtain that for any $t\geq \frac{1}{6N}(U^2+\sqrt{U+36\sigma^2})$,
\begin{align*}
\mathbb{P}\left(\left\|B_{s,N}-B_s\right\|>t\right)  < 14 r\left(\sum_{n=1}^N\mathbb{E}(Z^{(n)})^2\right)\exp\left\{-\frac{N^2t^2/2}{\sigma^2+NUt/3}\right\} 
\end{align*}
where $U=2\bigl(C_{d+s}^d\bigr)^2 C^2 \kappa^2$, $\sigma^2=4N\bigl(C_{d+s}^d\bigr)^4 C^4 \kappa^4$ and $r(A):=\frac{\operatorname{tr}(A)}{\|A\|}$ stands for the effective rank. Let
\[
r_N:=r\left(\sum_{n=1}^N\mathbb E\bigl[(Z^{(n)})^2\bigr]\right),
\qquad
L_\delta:=\log\frac{14r_N}{\delta}.
\]
Then, with probability at least $1-\delta$, we have
\[
\|B_{s,N}-B_s\|
\le
\bigl(C_{d+s}^d\bigr)^2C^2\kappa^2
\left(
\sqrt{\frac{8L_\delta}{N}}
+
\frac{4L_\delta}{3N}
\right).
\]
\end{proof}

\begin{proof}[Proof of Proposition \ref{main-prop:uniform_estimation}]
Let \(h\in\mathcal F_S^\gamma\). Then there exists
\(u\in\mathcal H_K\) such that $h=P_su$, $u=B_s^\gamma\psi$ and $\|\psi\|_{\mathcal H_K}\le S$.
Following the proof in Proposition \ref{main-Sam-Err}, we decompose
\[
\begin{aligned}
\widehat{\mathcal T}_{\lambda,N}h-\mathcal T_\lambda h
&=
(B_{s,N}+\lambda I)^{-1}
\left(B_{s,N}u -B_su
\right) \\
&\quad+
(B_{s,N}+\lambda I)^{-1}
(B_s-B_{s,N})
(B_s+\lambda I)^{-1}
B_su .
\end{aligned}
\]

By the functional calculus for bounded positive self-adjoint operators,
\[
\left\|
(B_s+\lambda I)^{-1}B_s^{\gamma+1}
\right\|
=
\sup_{\mu\in\sigma(B_s)}
\frac{\mu^{\gamma+1}}{\mu+\lambda}
\le
\sup_{\mu\in\sigma(B_s)}\mu^\gamma
\le
\|B_s\|^\gamma .
\]
Hence, for \(u=B_s^\gamma\psi\) with \(\|\psi\|_{\mathcal{H}_K}\le S\),
\[
\|(B_s+\lambda I)^{-1}B_su\|_{\mathcal{H}_K}
\le
S\|B_s\|^\gamma .
\]

Since $\|u\|_{\mathcal{H}_K}=\|B_s^\gamma\psi\|_{\mathcal{H}_K}\le S\|B_s\|^\gamma$, we obtain that
\begin{align*}
\|\widehat{\mathcal T}_{\lambda,N}h-\mathcal T_\lambda h\|_{\mathcal{H}_K}&\leq \frac{1}{\lambda}  \|B_{s,N} -B_s\| ~\|u\|_{\mathcal{H}_K} + \frac{1}{\lambda}  \|B_{s,N} -B_s\| ~\|(B_s+\lambda I)^{-1}B_su\|_{\mathcal{H}_K}\\
&\leq \frac{2S}{\lambda} \|B_s\|^\gamma ~\|B_{s,N} -B_s\|.
\end{align*}
Applying Lemma \ref{lemmasm} for \(B_{s,N}-B_s\), with probability at least \(1-\delta\),
\[
\|B_{s,N}-B_s\|
\le
\bigl(C_{d+s}^d\bigr)^2C^2\kappa^2\left(
\sqrt{\frac{8L_\delta}{N}}
+
\frac{4L_\delta}{3N}
\right).
\]
The result follows from the fact that $\|B_s\|\le \bigl(C_{d+s}^d\bigr)^2C^2\kappa^2$.
\end{proof}

\section{Online low-rank kernel regression}
In the proposed kernel algorithm, each entry of the generalized Gram matrix $P_s^{(1,1)}K(X_N,X_N)$ requires evaluation at all pairs of data points, making both the computation and storage of all $N^2$ entries potentially burdensome. Performing data analysis tasks with the kernel matrix typically involves solving linear systems, least-squares problems, or computing eigenvalue decompositions.
With direct algorithms, these linear algebra operations incur a computational cost of $\mathcal{O}(N^3)$, which renders kernel methods impractical for large-scale datasets. 

To address these computational challenges, we adopt a low-rank approximation \cite{meanti2020kernel,chen2025randomly} of the kernel operator together with an online regression strategy \cite{RCSP2}. The key idea is to restrict the solution space to a finite-dimensional subspace of the RKHS, thereby reducing the computational complexity, while processing the data in a streaming fashion to avoid storing the full design matrix. This leads to a scalable method that avoids forming the full design matrix while preserving the operator structure.

\paragraph{Low-rank approximation}
Let $Z_L=\{z_1,\dots,z_L\}\subset \overline{D}$ be a set of kernel centers with $L\ll N$. We restrict the representation \eqref{main-rep-ker} to the subspace
\[
\mathcal{H}_L
=
\operatorname{span}\{P_s^{(1,0)}K(z_j,\cdot)\}_{j=1}^L,
\]
and approximate the solution as
\[
u_{N,L}(x)
=
\sum_{j=1}^L c_j\, P_s^{(1,0)}K(z_j,x).
\]

Given data $\{(X^{(i)},Y^{(i)})\}_{i=1}^N$, we define the design matrix
\[
A_N \in \mathbb{R}^{N\times L}, 
\qquad
(A_N)_{ij}
=
P_s^{(1,0)}K(z_j, X^{(i)}),
\]
and the vector $Y_N=(Y^{(1)},\dots,Y^{(N)})^\top$. 
The coefficient $c=(c_1,\cdots,c_L)^\top$ is obtained by solving
\[
(A_N^\top A_N + \lambda I)c = A_N^\top Y_N.
\]

\paragraph{Online update}
Forming the full matrix $A_N$ explicitly requires $\mathcal{O}(NL)$ memory, which becomes prohibitive when $N$ is large. 
To address this issue, we compute the normal matrix and the right-hand side incrementally.
Let the data be processed in batches $\{(A_k,Y_k)\}_{k=1}^K$, where 
$A_k \in \mathbb{R}^{q\times L}$ denotes the local design matrix and 
$Y_k\in\mathbb{R}^q$ the corresponding observations. 

We initialize $G^{(0)}=\lambda I$ and $B^{(0)}=0$. Define
\begin{align*}
G^{(k)} &= G^{(k-1)} + A_k^\top A_k,\\
B^{(k)} &= B^{(k-1)} + A_k^\top,
\end{align*}
for $k=1,\dots,K$. After processing all batches, we obtain
\[
G_N = G^{(K)}, \qquad B_N = B^{(K)} = A_N^\top.
\]

This yields the linear operator
\[
W_N = (G_N)^{-1} B_N,
\]
which defines a mapping from the sampled data to the coefficient vector. 
For any input $f$, the solution is computed as
\[
c_N = W_N f(X_N), 
\qquad
u_{N,L}(x)
=
\sum_{j=1}^L c_j\, P_s^{(1,0)}K(z_j,x).
\]

\begin{algorithm}[H]\label{alg:online-lowrank}
\caption{Online low-rank kernel operator learning}
\begin{algorithmic}[1]
\STATE Initialize $G \leftarrow \lambda I$, $B \leftarrow 0$
\FOR{each batch $A_k$}
    \STATE $G \leftarrow G + A_k^\top A_k$
    \STATE $B \leftarrow B + A_k^\top$
\ENDFOR
\STATE Compute $W \leftarrow G^{-1} B$
\RETURN $W$
\end{algorithmic}
\end{algorithm}

\paragraph{Computational complexity} The online procedure is summarized in Algorithm~\ref{alg:online-lowrank}.
The proposed method only requires storing the matrix $G_N \in \mathbb{R}^{L\times L}$, leading to a memory cost of $\mathcal{O}(L^2)$ instead of $\mathcal{O}(N^2)$. Each batch update costs $\mathcal{O}(q L^2)$, and the overall complexity is $\mathcal{O}(N L^2)$, which is significantly more efficient than the $\mathcal{O}(N^3)$ complexity of the full kernel operator method.

Once the operator $W_N$ is constructed, evaluating the solution for a new input requires only matrix-vector multiplications. This separates the training and evaluation phases and enables efficient reuse of the learned operator.

\section{Additional numerical results}
In this section, we report additional numerical experiments to further evaluate the proposed method. These results complement those presented in the main text and demonstrate the method's stability and generalization properties across different test settings.

In Sections~\ref{Schrodinger Equation} and \ref{Heat equation}, 
we demonstrate the proposed method on time-independent Schr\"odinger equation and heat equation. In Section~\ref{Addition: A Comparison with Neural Operator Learning}, 
we further compare our approach with the operator learning method introduced in \cite{stepaniants2023learning}.

\subsection{Schr\"odinger Equation} \label{Schrodinger Equation}

We study the time-independent Schr\"odinger equation in two spatial dimensions:
\begin{align}\label{Schrodinger Eqn}
-\Delta u + V u = f \quad \text{in } D=(0,1)^2, 
\qquad u = g \quad \text{on } \partial D,
\end{align}
where $V$ is a given potential function. We consider the setting in which the potential $V$ and the boundary data $g$ are fixed, and investigate the associated input--output mapping $f \mapsto u$.

In the numerical experiments, we take $g=0$ and consider the potential 
\begin{align*} 
V(x) = V_0 \mathbf{1}_{A}(x),\quad A = \left\{
\max\!\left(
|x_1-0.5|,\;
\frac{1}{2}|x_1-0.5| + \frac{\sqrt{3}}{2}|x_2-0.5|
\right) \le 0.2
\right\},
\end{align*}
which represents a hexagonal potential well of height $V_0 = 1000$ centered at $(0.5,0.5)$ (see Figure~\ref{fig:schrodinger}(a)).

\begin{figure}[htbp]
\centering

\begin{minipage}[t]{0.48\textwidth}
    \centering
    \includegraphics[width=\linewidth]{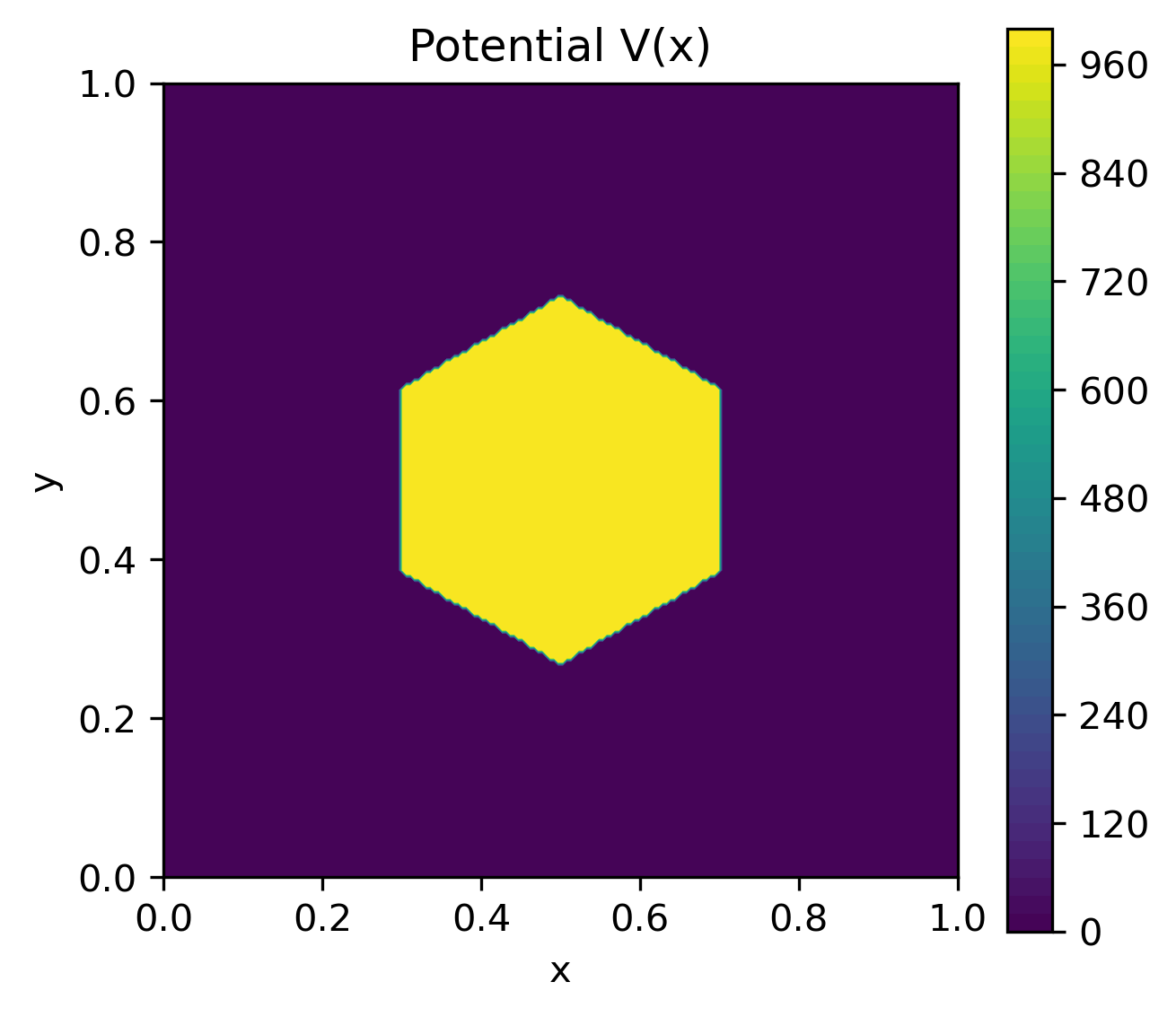}
    \vspace{-2pt}
    {\small (a)}
\end{minipage}
\hfill
\begin{minipage}[t]{0.48\textwidth}
    \centering
    \includegraphics[width=\linewidth]{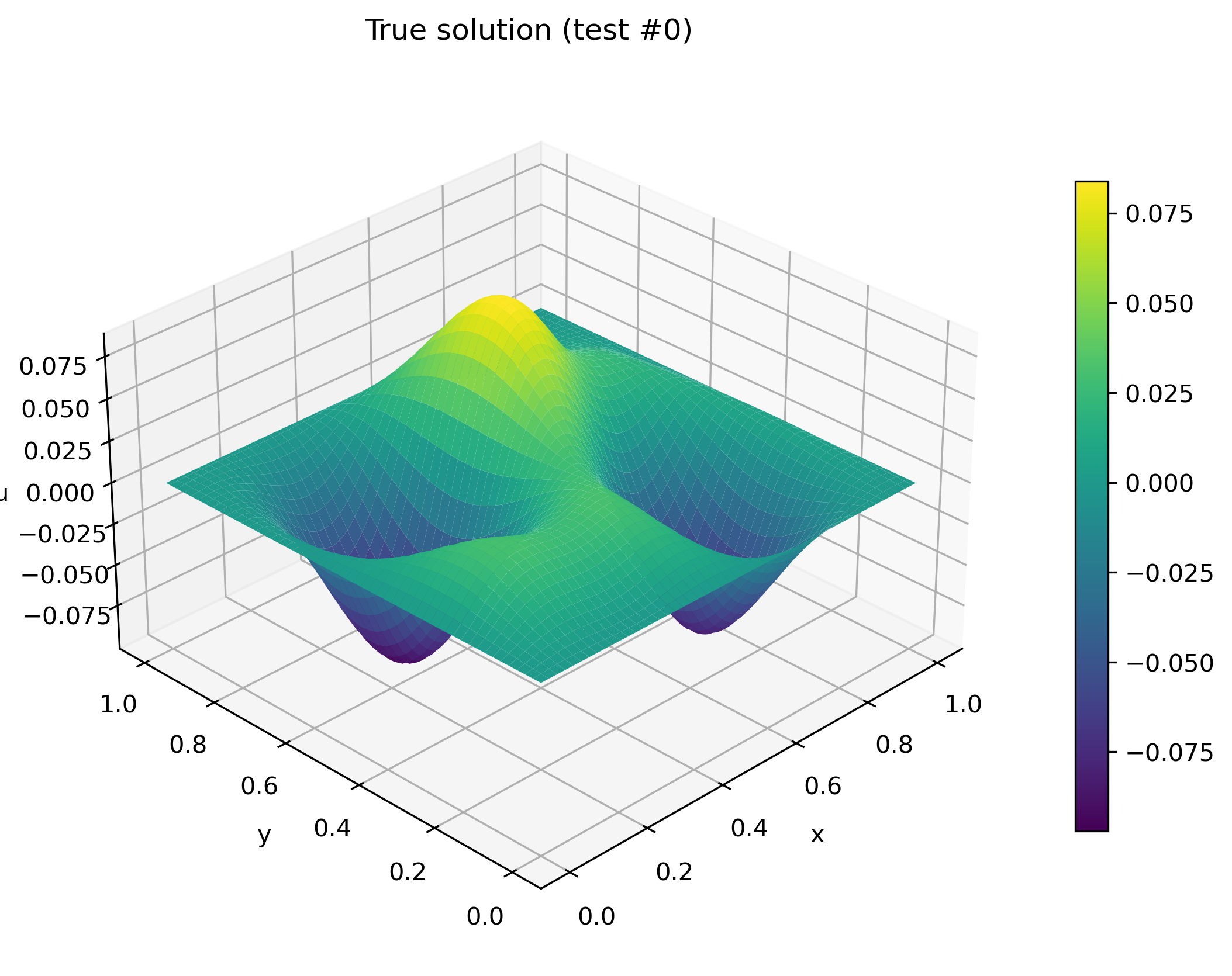}
    \vspace{-2pt}
    {\small (b)}
\end{minipage}

\vspace{6pt}

\begin{minipage}[t]{0.48\textwidth}
    \centering
    \includegraphics[width=\linewidth]{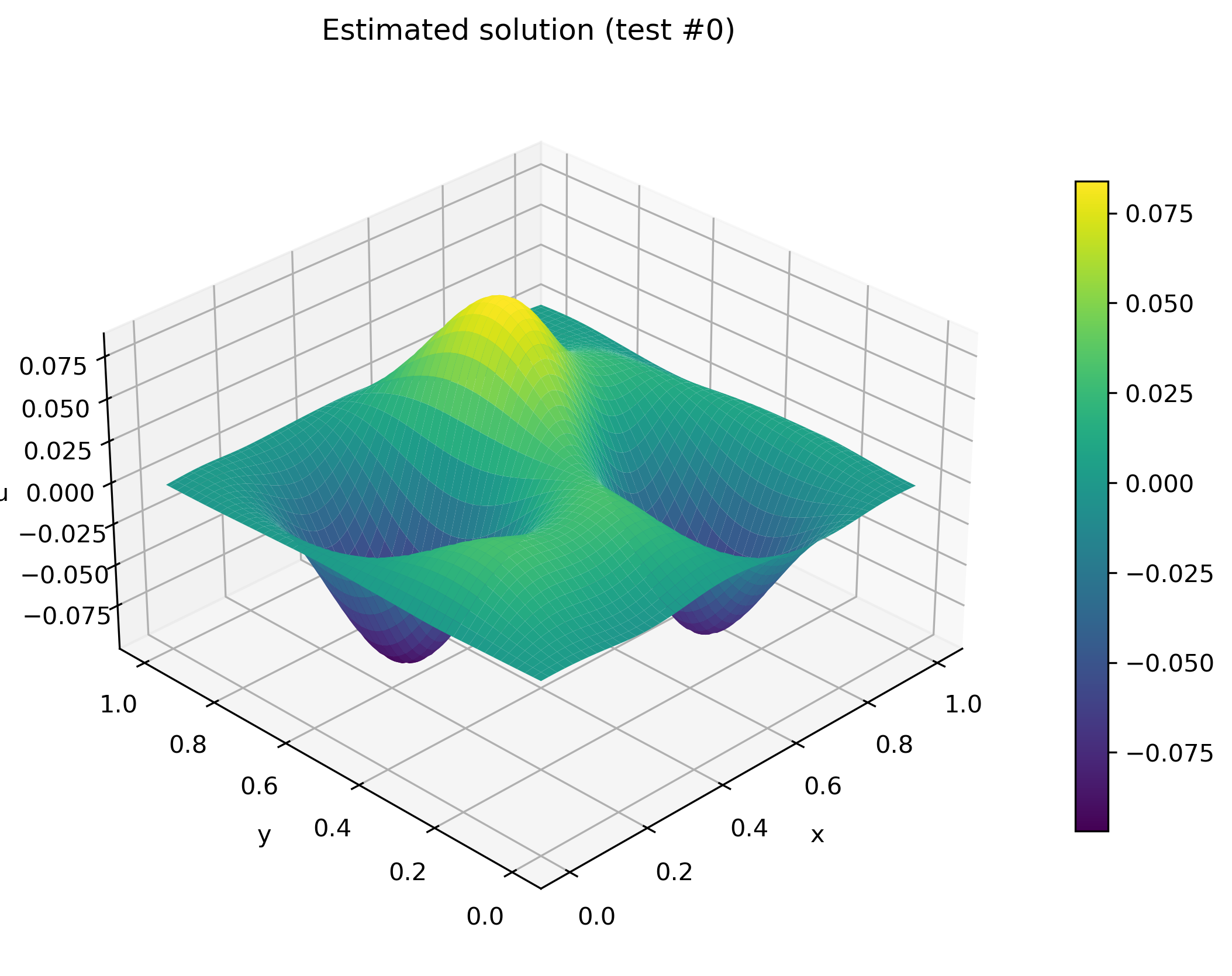}
    \vspace{-2pt}
    {\small (c)}
\end{minipage}
\hfill
\begin{minipage}[t]{0.48\textwidth}
    \centering
    \includegraphics[width=\linewidth]{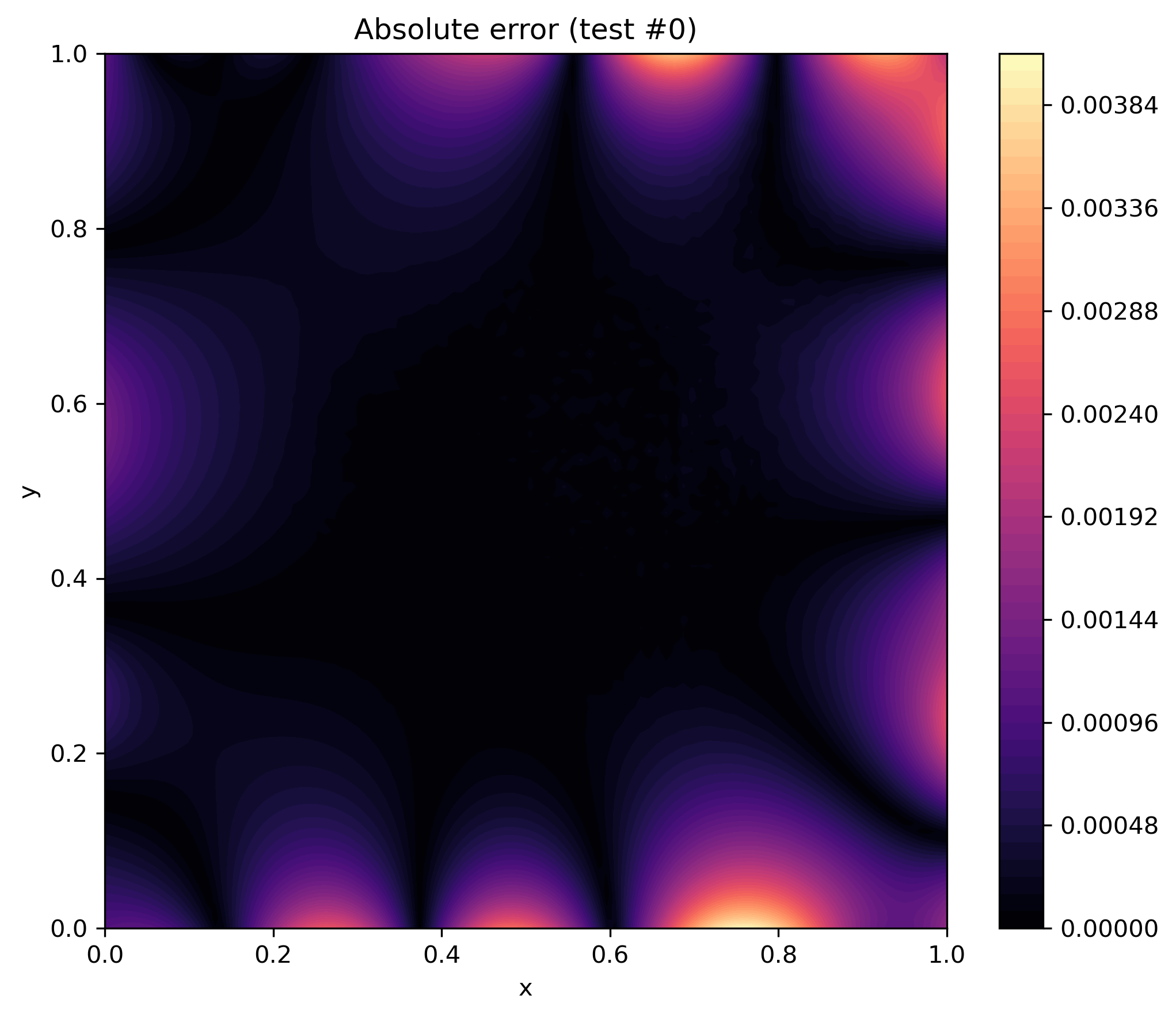}
    \vspace{-2pt}
    {\small (d)}
\end{minipage}

\caption{Schr\"odinger equation. 
(a) Potential $V(x)$; 
(b) true solution; 
(c) estimated solution; 
(d) absolute error.}
\label{fig:schrodinger}
\end{figure}

\noindent{\bf Training and Testing.}\quad We uniformly sample $N_1 = 1200$ points in $D$ and $N_2 = 800$ points on $\partial D$. The Gaussian kernel bandwidth is set to $\eta = 0.08$, and the regularization parameter is chosen as $\lambda = 3 \times 10^{-3}$.
To evaluate the performance of the proposed method, we construct a test family of oscillatory modes with varying frequencies and phases as
\begin{align*}
u_k(x,y)
&=
1.5\, x(1-x)\, y(1-y) \notag \\
&\quad \times \Big[
\sin\!\big(2\pi m x + \phi_1\big)\,
\sin\!\big(2\pi n y + \phi_2\big) \notag \\
&\qquad
+ 0.5\, \cos\!\big(2\pi (m+n)x + \phi_1\big) \notag \\
&\qquad
+ 0.35\, \sin\!\big(2\pi (m x + n y) + \phi_2\big)
\Big],
\end{align*}
where $(i,j)\in\{0,\dots,9\}^2$, $m=(i\bmod 4)+1$, $n=(j\bmod 4)+1$, and $\phi_1=2\pi i/10$, $\phi_2=2\pi j/10$. It can be checked that this family satisfies the boundary condition of the Schr\"odinger equation \eqref{Schrodinger Eqn}. The chosen family spans a set of smooth functions with varying frequency content and non-separable structure, serving as a challenging benchmark 
for operator learning methods.

\begin{table}[htbp]
\centering
\caption{Numerical performance for Sch\"odinger equation}\label{Numerical performance for Schodinger equation}
\vspace{-0.5em}
\begin{tabular}{c|c|c}
\hline
 Cost & Relative $L^2$ error & Relative $L^\infty$ error \\ 
\hline
0.210 s & 4.549$\times 10^{-2}$ & 7.480$\times 10^{-2}$ \\ 
\hline
\end{tabular}
\end{table}

\noindent{\bf Results.}\quad 
The relative $L^2$ and $L^\infty$ errors are reported in Table~\ref{Numerical performance for Schodinger equation}. 
Overall, the proposed method achieves consistently low approximation errors across 
all test cases, demonstrating robustness with respect to variations in the potential 
function $V$.
As expected, the errors remain on the order of $10^{-2}$, indicating stable and accurate performance. 
Moreover, the computational time is consistently below 1 second, highlighting the efficiency of the proposed approach and its favorable scalability with respect to the heterogeneity of the coefficient. 
Figure~\ref{fig:schrodinger} illustrates the performance of the method for one input functions. It shows that the predicted solutions closely match the ground truth, with small and structured errors, demonstrating robustness and accuracy across varying levels of regularity in the data.

\subsection{Heat equation}\label{Heat equation}
We now consider the heat equation on the space-time domain $D=[0,1] \times[0, T]$ with Dirichlet boundary conditions is
\begin{equation}\label{heat equation}
\begin{cases}
\frac{\partial u}{\partial t}-\alpha \frac{\partial^2 u}{\partial x^2}=f(x, t), \quad(x, t) \in[0,1] \times[0, T],\\
u(0, t)=u(1, t)=0,\quad u(x, 0)=0,
\end{cases}    
\end{equation}
where $\alpha=0.1$ is given. We set $T=1$ and investigate the associated input--output mapping $f \mapsto u$. 

\begin{figure}[htbp]
\centering

\begin{minipage}[t]{0.32\textwidth}
    \centering
    \includegraphics[width=\linewidth]{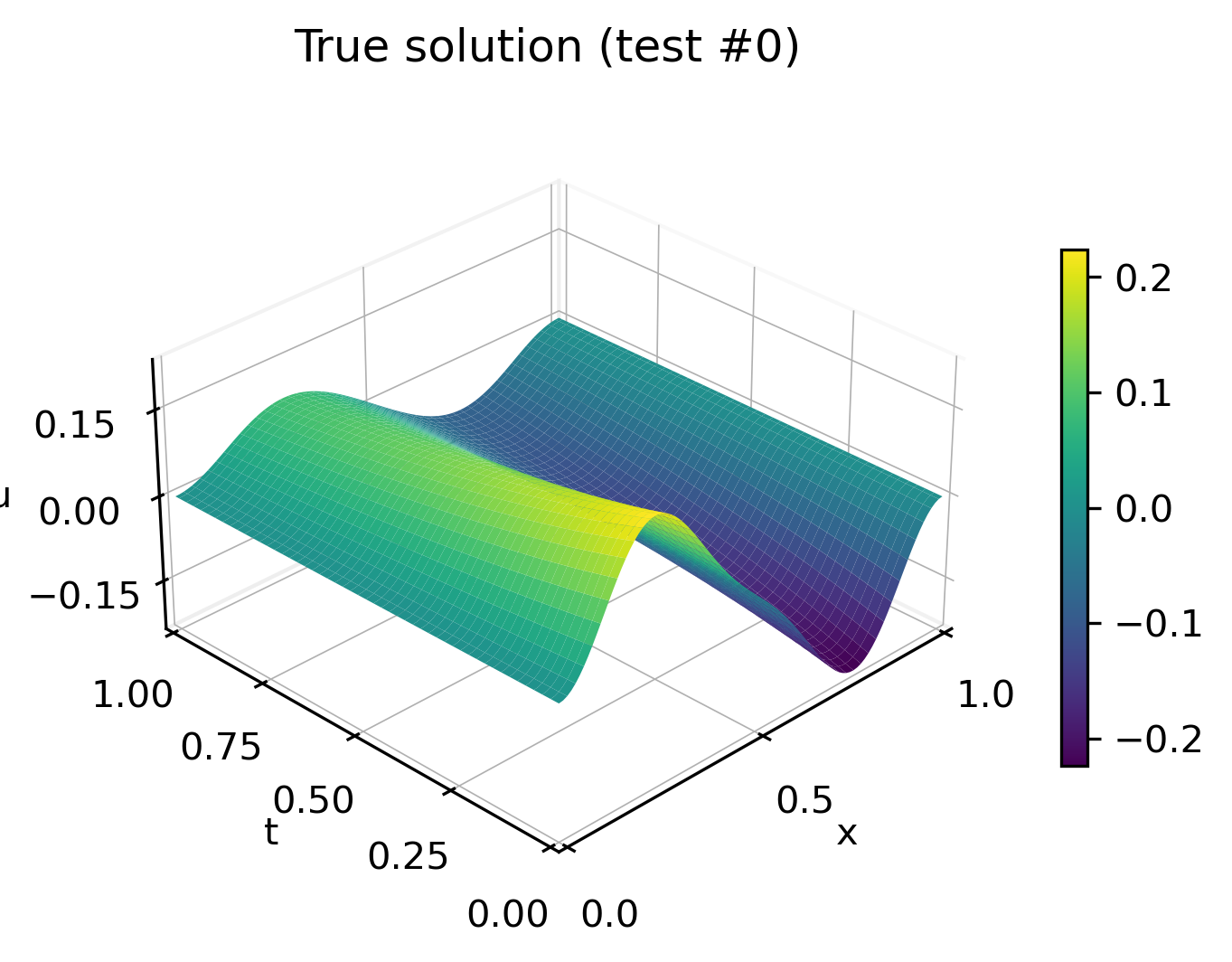}
    \vspace{-2pt}
    {\small (a)}
\end{minipage}
\hfill
\begin{minipage}[t]{0.32\textwidth}
    \centering
    \includegraphics[width=\linewidth]{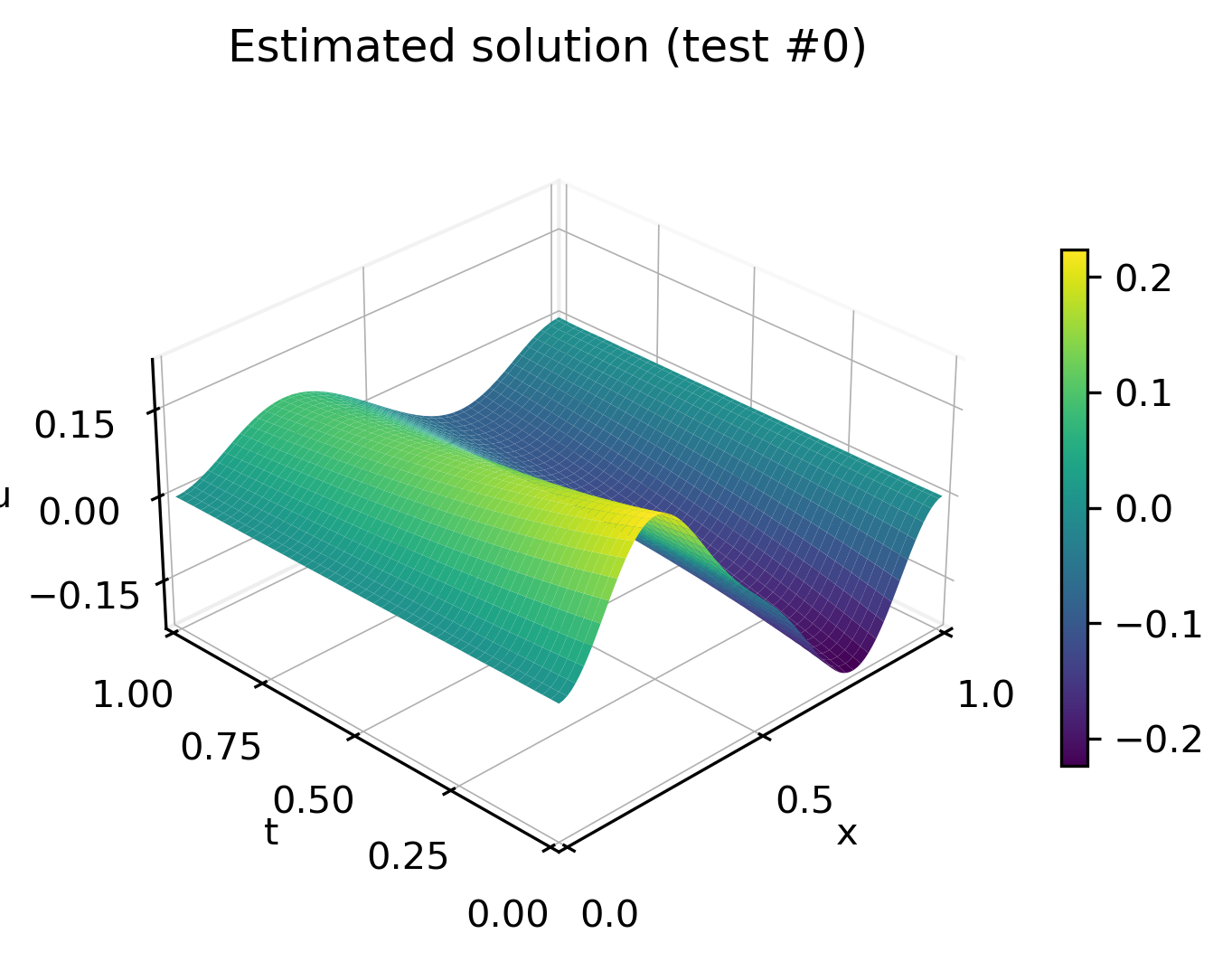}
    \vspace{-2pt}
    {\small (b)}
\end{minipage}
\hfill
\begin{minipage}[t]{0.32\textwidth}
    \centering
    \includegraphics[width=\linewidth]{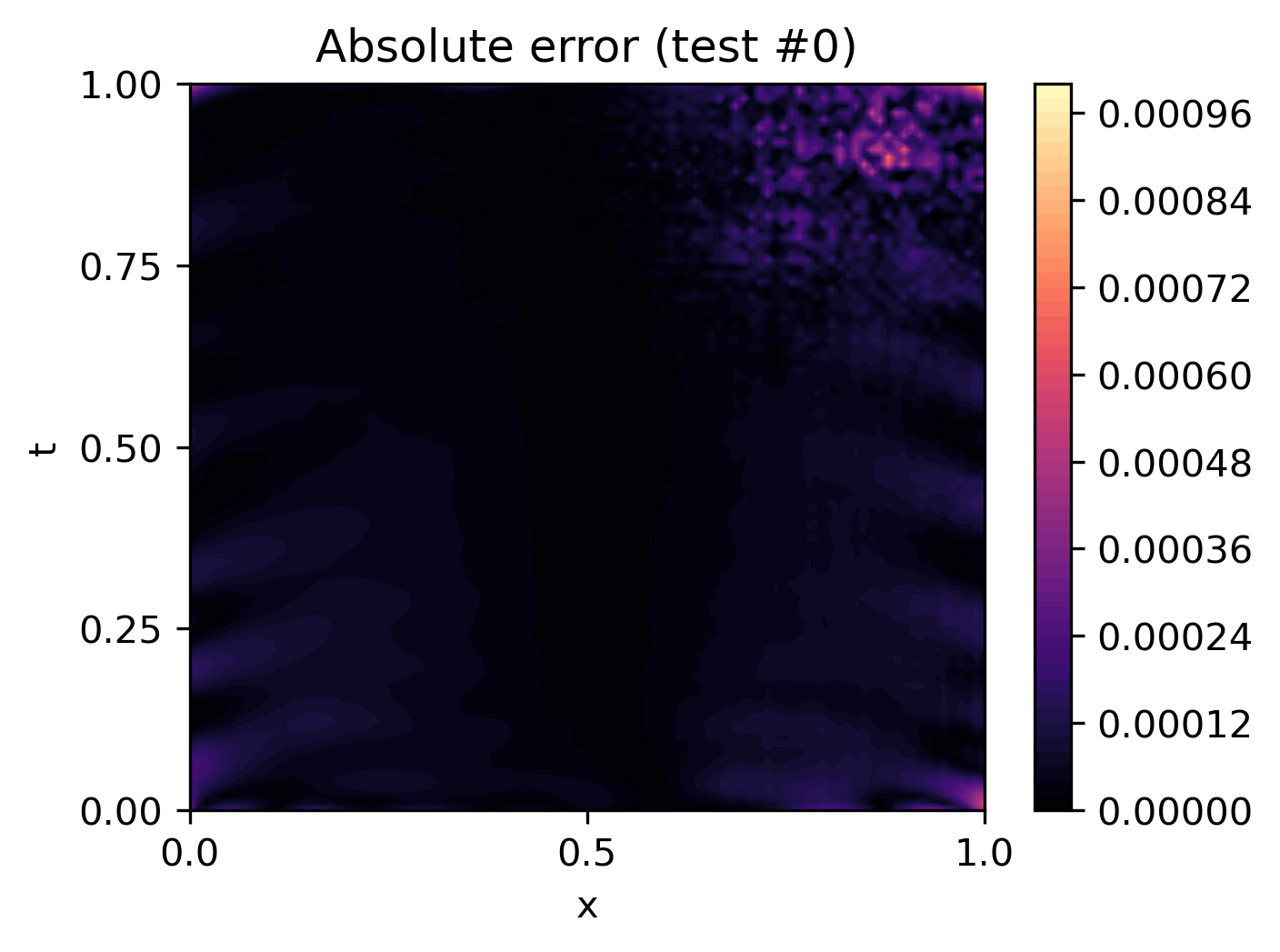}
    \vspace{-2pt}
    {\small (c)}
\end{minipage}

\caption{Heat equation. 
(a) true solution; 
(b) estimated solution; 
(c) absolute error.}
\label{fig:heat}
\end{figure}

\noindent{\bf Training and Testing.}\quad We uniformly sample $N_1 = 1800$ points in $D$ and $N_2 = 1800$ points on $\partial D$ and $N_3=1400$ in $[0,1]$ for initial samples. The Gaussian kernel bandwidth is set to $\eta = 0.08$, and the regularization parameter is chosen as $\lambda = 10^{-5}$.
To evaluate the performance of the proposed method, we construct a test family of oscillatory modes with varying frequencies and phases as
We define a family of test functions
\begin{equation*}
u_k(x,t)
=
1.2\, x(1-x)
\left(
\sin(2\pi m x + \phi_1)\, e^{-\lambda t}
+
0.3\, \sin(2\pi (m+1)x)\, e^{-(\lambda+1)t}
\right),
\end{equation*}
with $m=(i \bmod 4)+1$, $\lambda=(j \bmod 4)+1$, and $\phi_1=2\pi i/10$.
The chosen test functions are designed to (i) satisfy the boundary conditions of the heat equation \eqref{heat equation}, 
(ii) exhibit multi-scale spatial oscillations, and (iii) reflect the intrinsic temporal decay structure of the heat equation. 

\begin{table}[htbp]
\centering
\caption{Numerical performance for heat equation}\label{Numerical performance for heat equation}
\vspace{-0.5em}
\begin{tabular}{c|c|c}
\hline
 Cost & Relative $L^2$ error & Relative $L^\infty$ error \\ 
\hline
3.610 s & 1.874$\times 10^{-3}$ & 4.227$\times 10^{-3}$ \\ 
\hline
\end{tabular}
\end{table}

\noindent{\bf Results.}\quad 
The relative $L^2$ and $L^\infty$ errors are reported in  Table~\ref{Numerical performance for heat equation}. 
Overall, the proposed method achieves consistently low approximation errors across 
all test cases, demonstrating robustness with respect to variations in the source 
term $f$.
As expected, the errors remain on the order of $10^{-3}$, indicating stable and accurate performance. 
Moreover, the computational time is consistently about 3 seconds, highlighting the efficiency of the proposed approach and its favorable scalability with respect to coefficient heterogeneity. 
Figure~\ref{fig:heat} illustrates the performance of the method for one input function. It shows that the predicted solutions closely match the ground truth, with small and structured errors, demonstrating robustness and accuracy across varying levels of regularity in the data.

\subsection{Comparison with Green Operator Learning: Helmholtz Equation}
\label{Addition: A Comparison with Neural Operator Learning}

In addition to the experiments presented in Section~\ref{main-A Comparison with Neural Operator Learning}, we include an additional test case with non-oscillatory functions to further assess the performance of the proposed method. This complements the previous results and provides additional evidence of the robustness and accuracy of the kernel operator approach.

For evaluation, we further construct the test set (non-oscillatory functions) of size $M = 100$:
\[
u_k(x) = -0.1 + 0.2x + x(1-x)\,NN_k(x),
\]
where $NN_k$ are randomly generated neural networks.

\begin{table}[htbp]
\centering
\caption{Comparison of numerical performance for the Helmholtz equation.}
\label{Comparison2}
\begin{tabular}{c c l c c}
\hline
Case & Method & Cost & Relative $L^2$ error & Relative $L^\infty$ error \\ 
\hline
$\omega=20$ & Ours & 1.533 {\rm s} & $1.068\times 10^{-3}$ & $2.354\times 10^{-3}$ \\ 
       & Green operator  & 0.721 {\rm h} & $2.547 \times 10^{-2}$ & $9.509\times 10^{-2}$ \\ 
\hline
$\omega=200$ & Ours & 0.380 {\rm s} & $9.929\times 10^{-4}$ & $1.585\times 10^{-3}$ \\ 
       & Green operator  & 0.745 {\rm h} & $1.609\times 10^{-1}$ & $7.364\times 10^{-1}$ \\ 
\hline
\end{tabular}
\end{table}

\noindent{\bf Results.}\quad 
Table~\ref{Comparison2} reports averaged relative $L^2$ and $L^\infty$ errors on the additional test set of non-oscillatory functions, and Figure~\ref{w20} shows representative examples.
The results are consistent with those in Section \ref{main-A Comparison with Neural Operator Learning}. The proposed method maintains stable accuracy across both low- and high-frequency regimes, whereas the Green operator (GO) approach deteriorates as the frequency increases. 
As shown in Figure~\ref{w20}, the dominant error arises from the approximation near the boundary. In the Green operator method, the boundary condition is not imposed as a hard constraint, whereas in the proposed kernel operator method it is incorporated into the loss function. This leads to improved accuracy of the proposed method near the boundary.
These results show that the performance gap is not limited to oscillatory inputs, but also persists for smooth test functions, further supporting the robustness of the proposed method.

\begin{figure}[htbp]
\centering

\begin{minipage}[t]{0.48\textwidth}
    \centering
    \includegraphics[width=\linewidth]{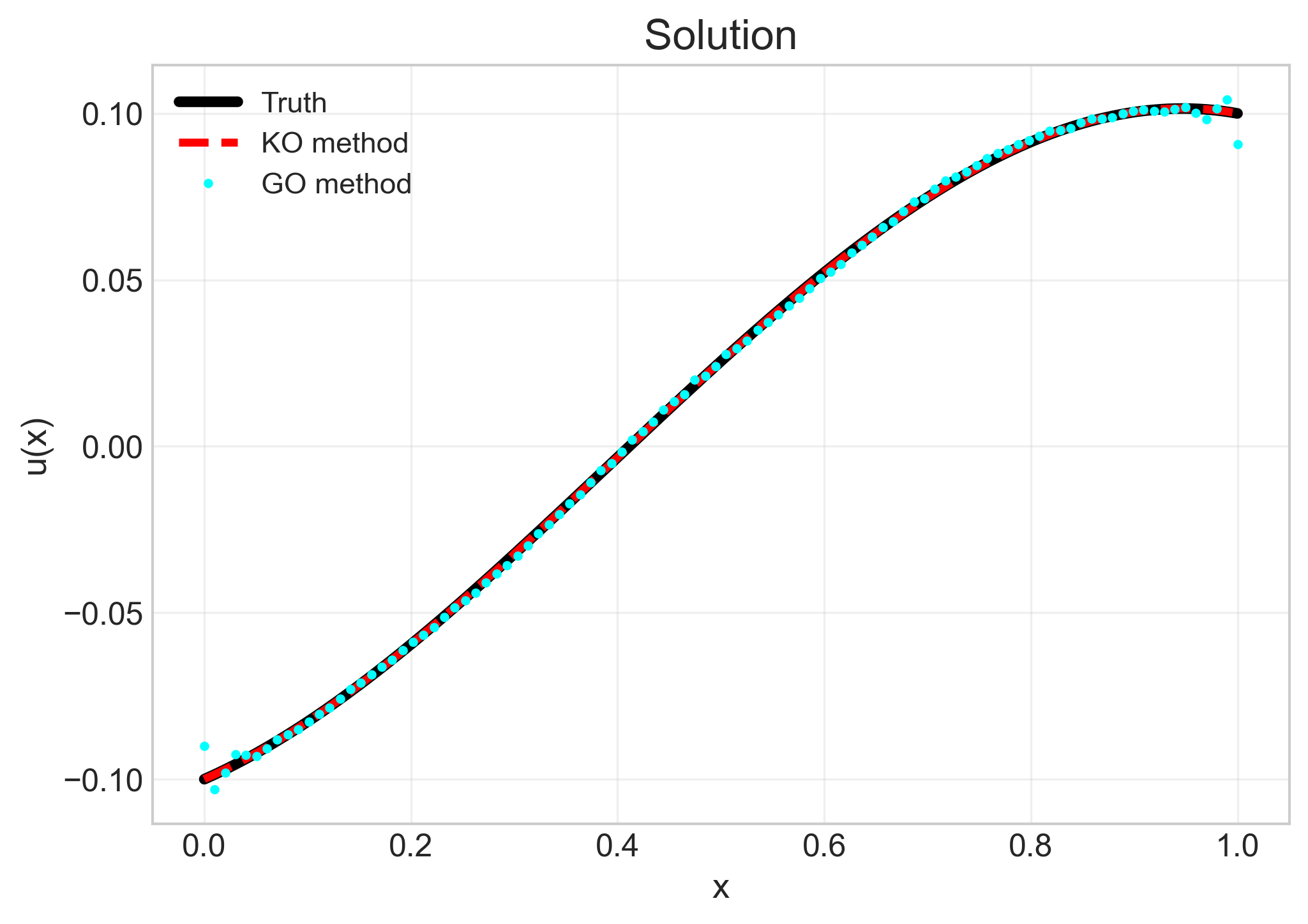}
    \vspace{-2pt}
    {\small (a) $\omega=20$}
\end{minipage}
\hfill
\begin{minipage}[t]{0.48\textwidth}
    \centering
    \includegraphics[width=\linewidth]{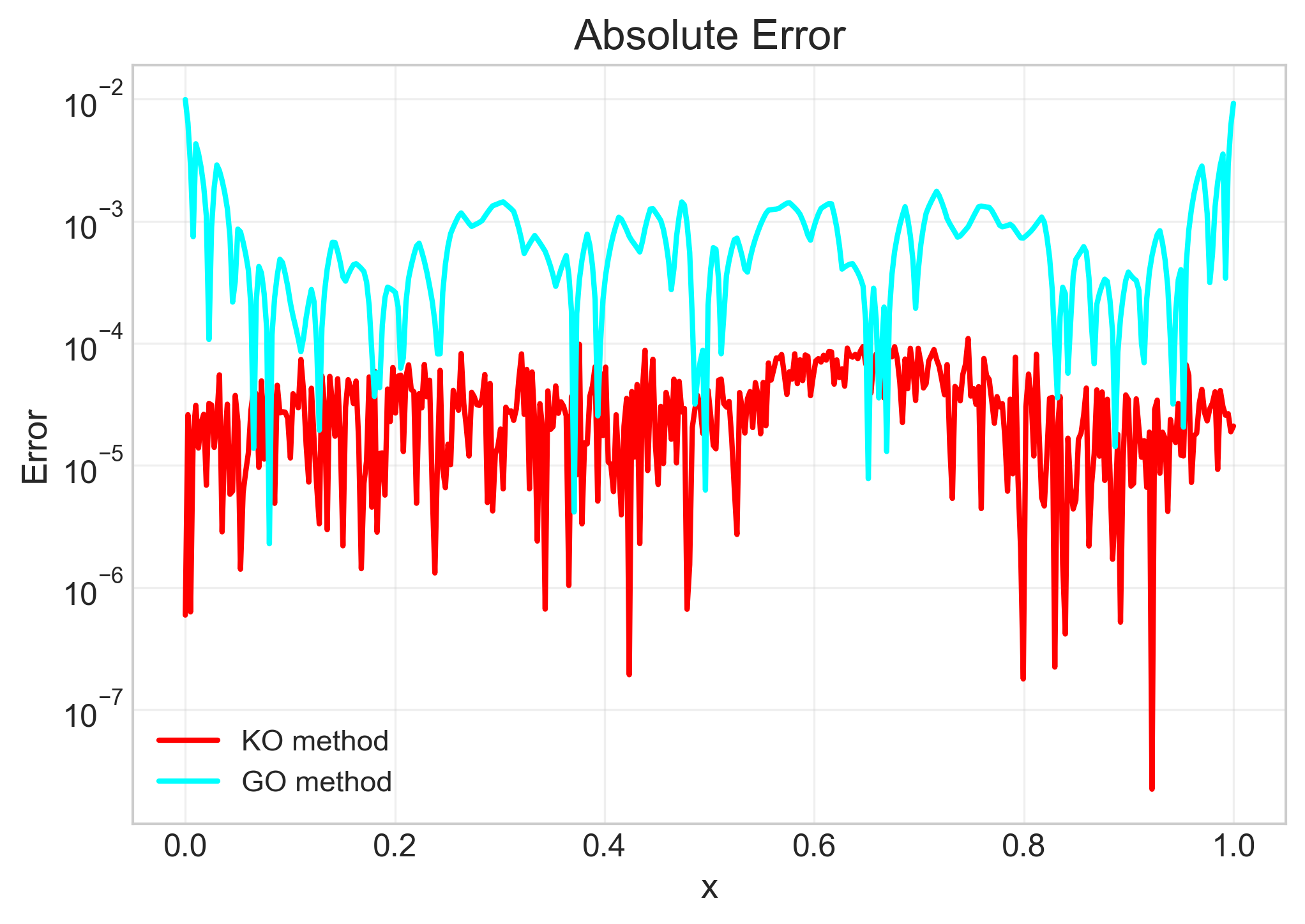}
    \vspace{-2pt}
    {\small (b) $\omega=20$}
\end{minipage}

\vspace{6pt}

\begin{minipage}[t]{0.48\textwidth}
    \centering
    \includegraphics[width=\linewidth]{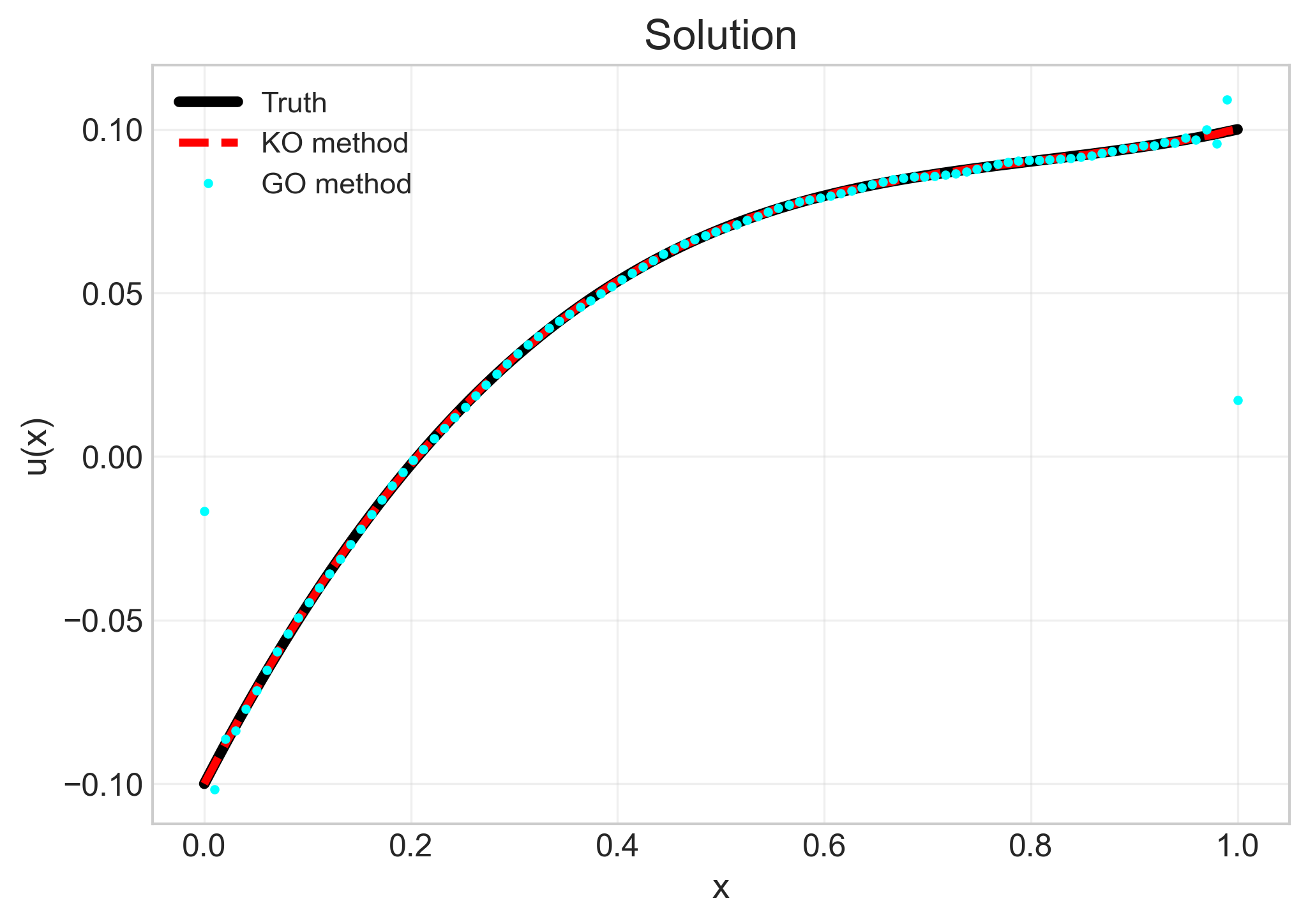}
    \vspace{-2pt}
    {\small (c) $\omega=200$}
\end{minipage}
\hfill
\begin{minipage}[t]{0.48\textwidth}
    \centering
    \includegraphics[width=\linewidth]{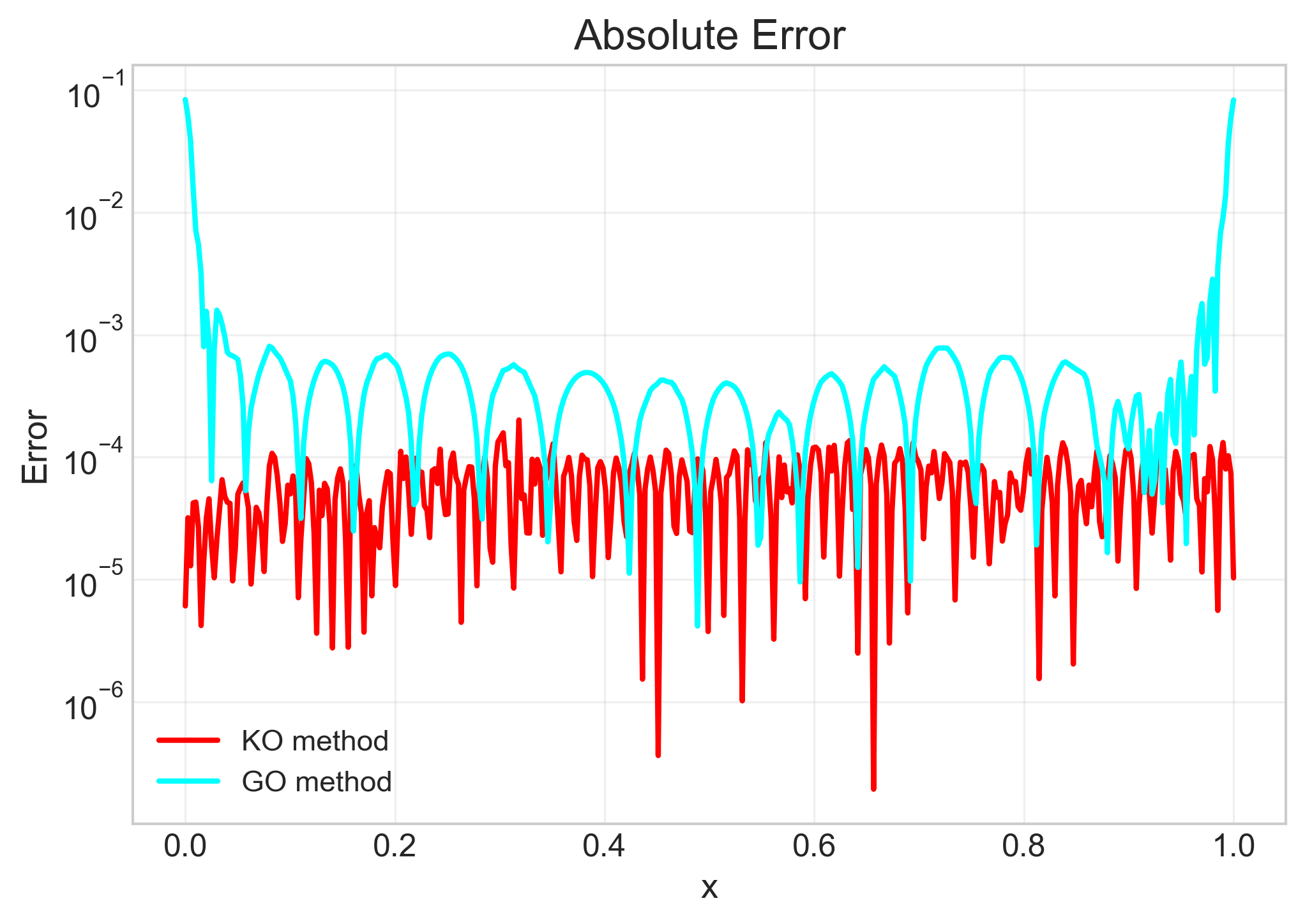}
    \vspace{-2pt}
    {\small (d) $\omega=200$}
\end{minipage}

\caption{Helmholtz equation.
(a,c) True solutions with Kernel Operator (KO) and Green Operator (GO) approximations for $\omega=20$ and $\omega=200$, respectively;
(b,d) corresponding absolute errors.}
\label{w20}
\end{figure}

\subsection{Error analysis}

We investigate the empirical convergence behavior of the proposed low-rank online kernel method for the Poisson problem
\begin{equation}\label{eq:poisson-exp}
\begin{cases}
-\Delta u(x)=f(x), & x\in D,\\
u(x)=0, & x\in \partial D,
\end{cases}
\end{equation}
posed on the domain \(D=(0,1)^3\). As a test family, we consider the Laplacian eigenfunctions
\[
u_k(x)
=
\prod_{\ell=1}^3 \sin(\pi k_\ell x_\ell),
\qquad 
k\in\{1,\dots,k_{\max}\}^3,
\]
with corresponding right-hand side
\[
f_k(x)=\pi^2 |k|^2 u_k(x).
\]
We take \(k_{\max}=4\) and use \(M=50\) test samples.

We emphasize that the primary objective of this experiment is to investigate the asymptotic convergence behavior. 
Therefore, relatively large sample sizes are considered in order to access the regime in which the statistical error dominates 
and the empirical convergence rate becomes observable. To study the dependence on the sample size, we consider
\[
N=
10^4\times
\{1,2,3,5,8,12,16,20,25,30\},
\]
and independently generate \(20\) training datasets for each value of \(N\).

To enforce the homogeneous Dirichlet boundary condition, we adopt a boundary-adapted representation
\[
u_{N,L}(x)
=
\rho(x)\sum_{j=1}^L c_j K_{\eta}(z_j,x),
\qquad
\rho(x)=\prod_{\ell=1}^3 x_\ell(1-x_\ell),
\]
which guarantees \(u_{N,L}|_{\partial D}=0\) by construction. 

In the experiments, we set the number of kernel centers to \(L=5000\), the batch size to \(q=2000\), the kernel bandwidth to \(\eta=0.2\). We consider both a fixed regularization parameter $\lambda = 10^{-8}$ and a data-adaptive choice $\lambda = 10^{-7} *N^{-0.4}$.
We evaluate both the relative \(L^2\) and \(L^\infty\) errors defined in \eqref{main-relative error}. 
The reported results are averaged over the sampled modes and multiple independent realizations of the training data.

\textbf{Results.}\quad Figures~\ref{fig: convergence rate}--\ref{fig: convergence rate adaptive} illustrate the convergence behavior of the proposed online low-rank kernel solver for the Poisson equation using fixed and data-adaptive regularization parameters, respectively. 
In both figures, panels~(a)--(b) display the relative \(L^2\) and \(L^\infty\) errors on a linear scale, while panels~(c)--(d) show the corresponding log-scale plots together with least-squares fitted reference slopes. 

In both cases, the relative \(L^2\) and \(L^\infty\) errors decrease steadily as the sample size \(N\) increases, indicating stable convergence of the estimator. 
The data-adaptive regularization parameter exhibits slightly improved numerical performance compared with the fixed choice. 
Moreover, the standard deviations remain small across all sample sizes, demonstrating robustness with respect to random sampling.

The fitted slopes indicate an approximately algebraic convergence rate of the form
\[
\mathrm{Err}(N)\sim N^{-\beta}.
\]
For the fixed regularization parameter $\lambda=10^{-8}$, the empirical exponents are approximately \(\beta\approx0.38\) and \(\beta\approx0.40\) for the relative \(L^2\) and \(L^\infty\) errors, respectively. 
For the data-adaptive regularization parameter $\lambda = 10^{-7} *N^{-0.4}$, the observed exponents improve to approximately \(\beta\approx0.52\) and \(\beta\approx0.56\).
These observations are consistent with the theoretical convergence behavior predicted by the statistical error analysis.

\begin{figure}[htbp]\label{fig: convergence rate}
\centering
\begin{minipage}[t]{0.48\textwidth}
    \centering   \includegraphics[width=\linewidth]{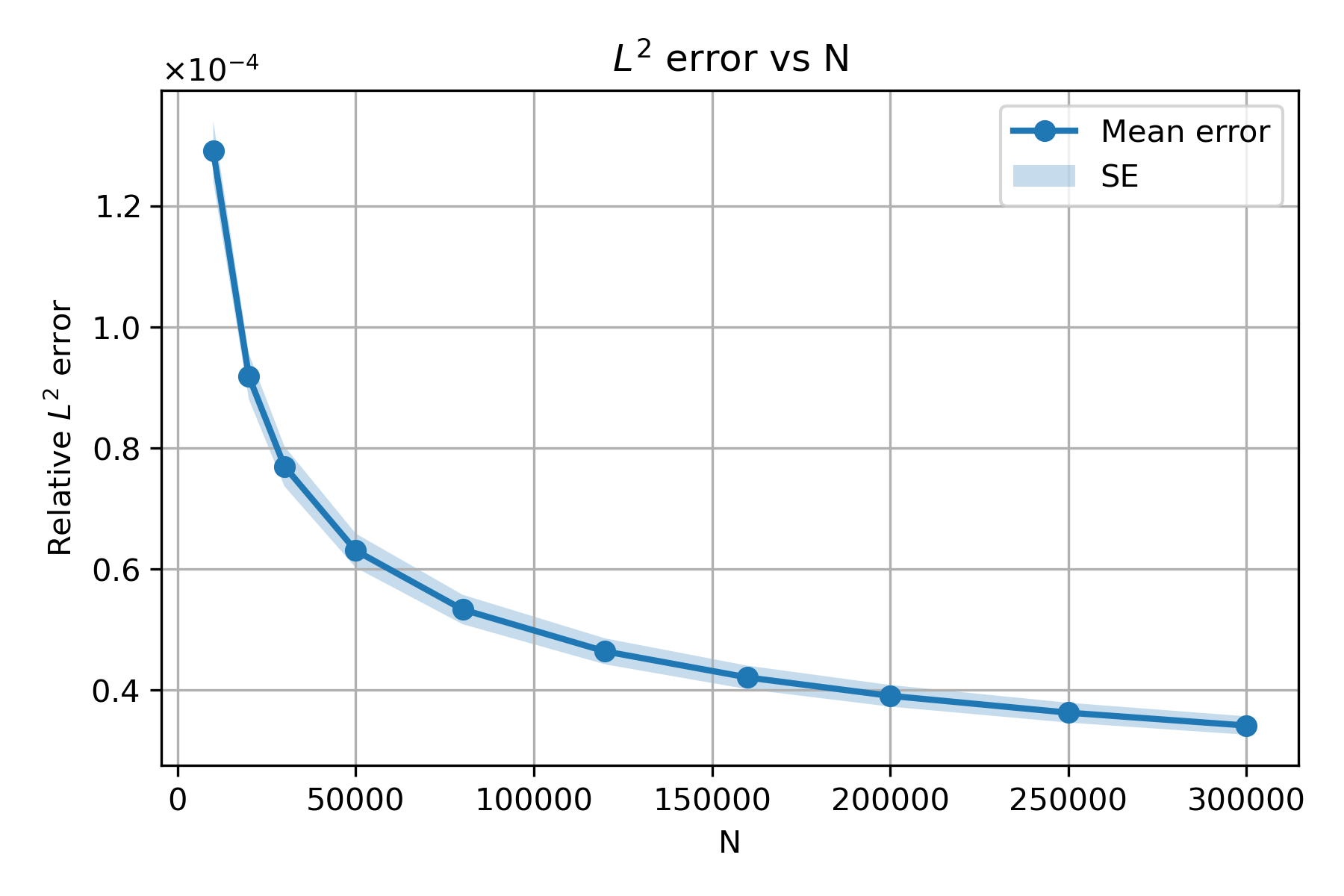}
    \vspace{-2pt}
    {\small (a) }
\end{minipage}
\hfill
\begin{minipage}[t]{0.48\textwidth}
    \centering
    \includegraphics[width=\linewidth]{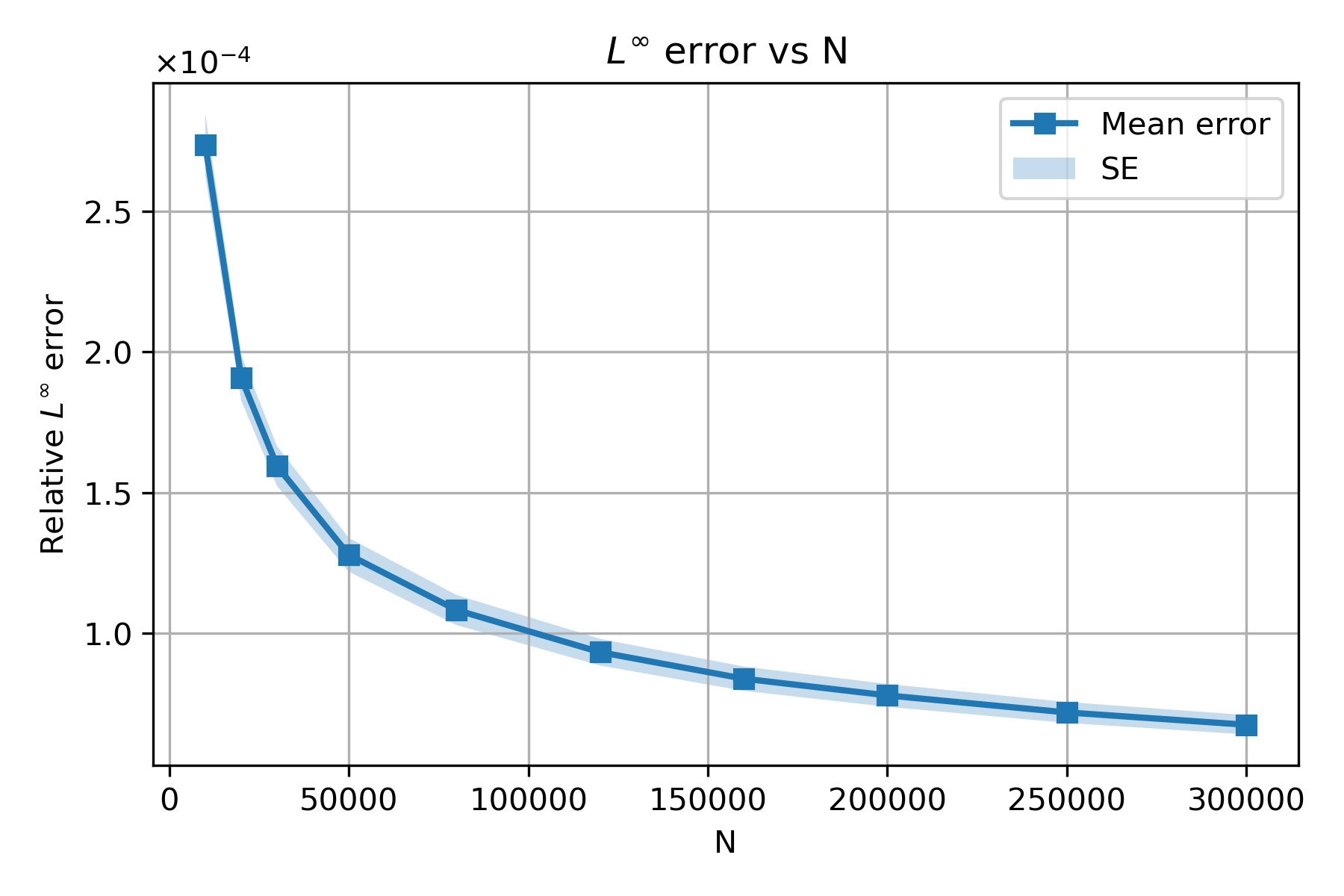}
    \vspace{-2pt}
    {\small (b) }
\end{minipage}
\vspace{6pt}

\begin{minipage}[t]{0.48\textwidth}
    \centering
    \includegraphics[width=\linewidth]{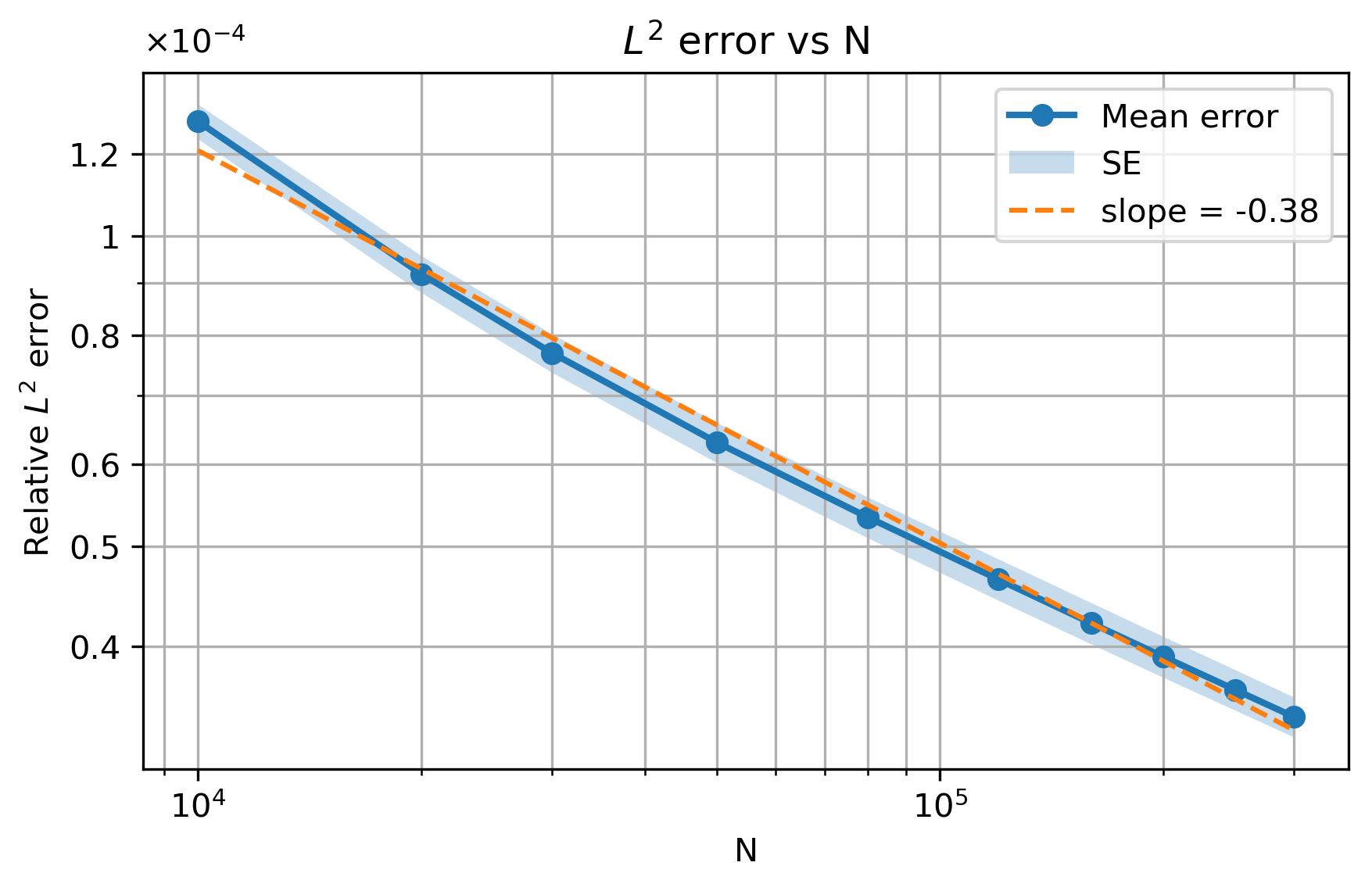}
    \vspace{-2pt}
    {\small (c) }
\end{minipage}
\hfill
\begin{minipage}[t]{0.48\textwidth}
    \centering
    \includegraphics[width=\linewidth]{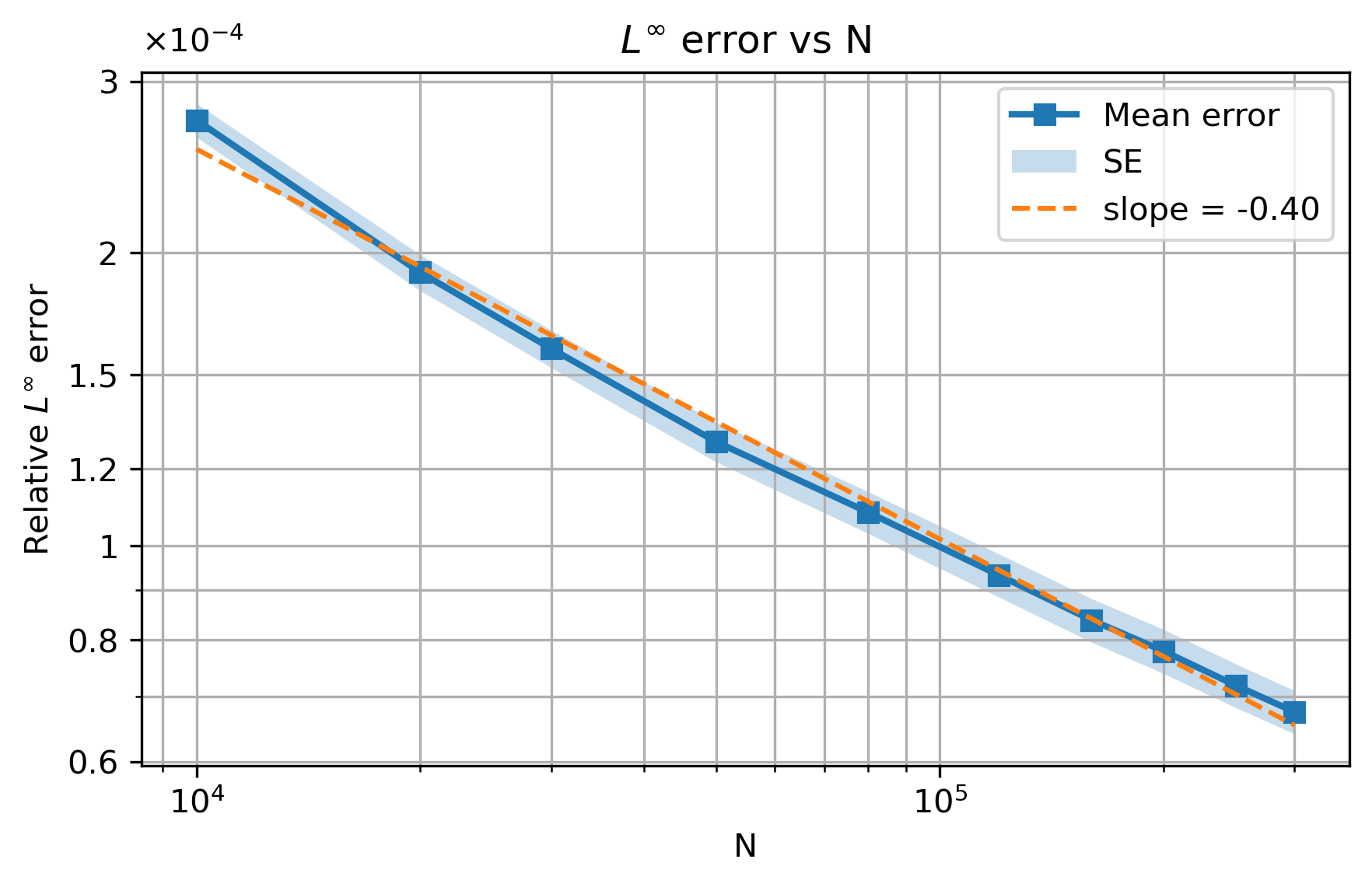}
    \vspace{-2pt}
    {\small (d) }
\end{minipage}

\caption{Fixed $\lambda=10^{-8}$.
(a,b) Mean-SE convergence of the relative $L^2$ and $L^\infty$ errors versus sample size $N$ (b) Mean-SE convergence of the relative $L^2$ and $L^\infty$ errors in the log-scale.}
\end{figure}

\begin{figure}[htbp]\label{fig: convergence rate adaptive}
\centering
\begin{minipage}[t]{0.48\textwidth}
    \centering   \includegraphics[width=\linewidth]{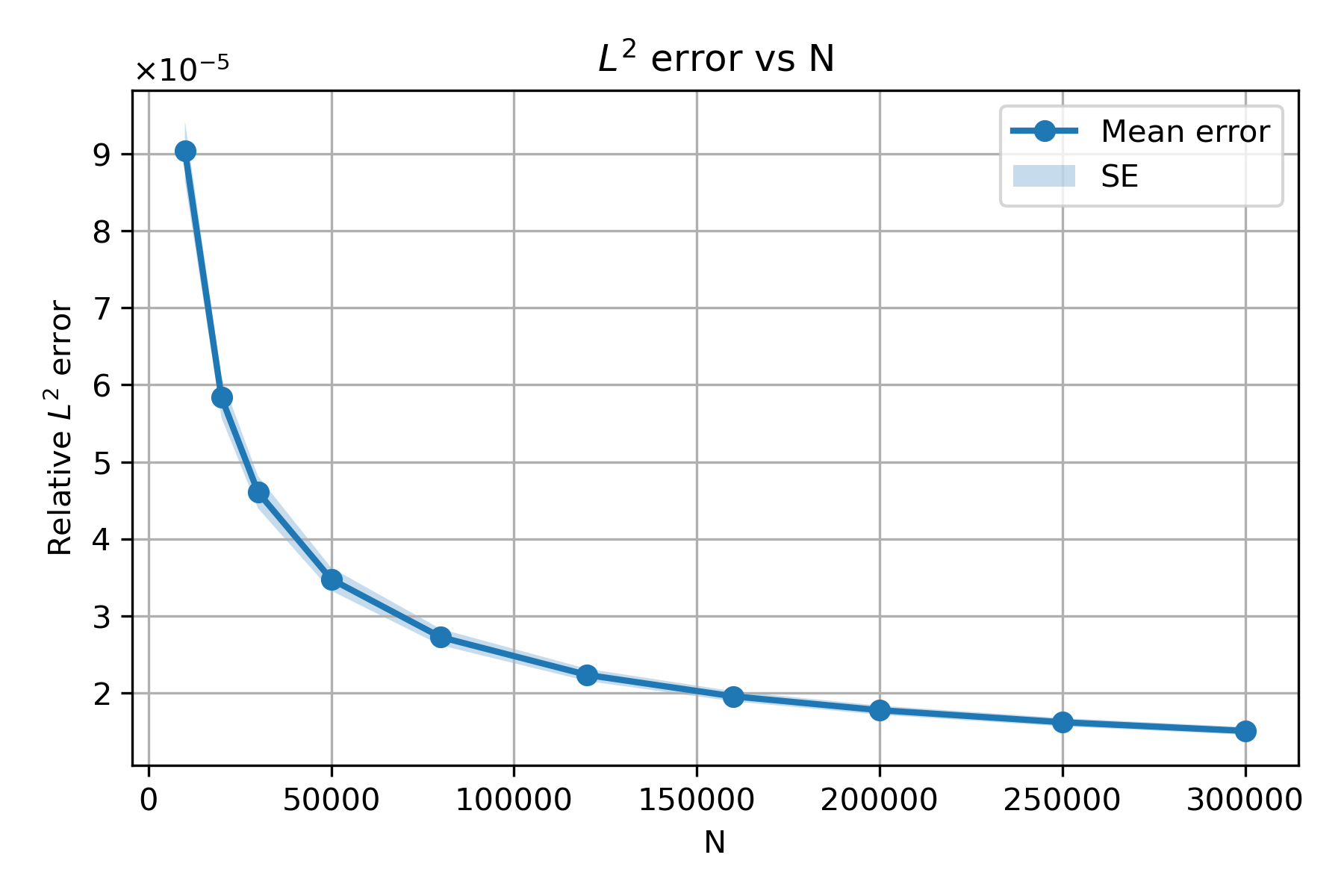}
    \vspace{-2pt}
    {\small (a) }
\end{minipage}
\hfill
\begin{minipage}[t]{0.48\textwidth}
    \centering
    \includegraphics[width=\linewidth]{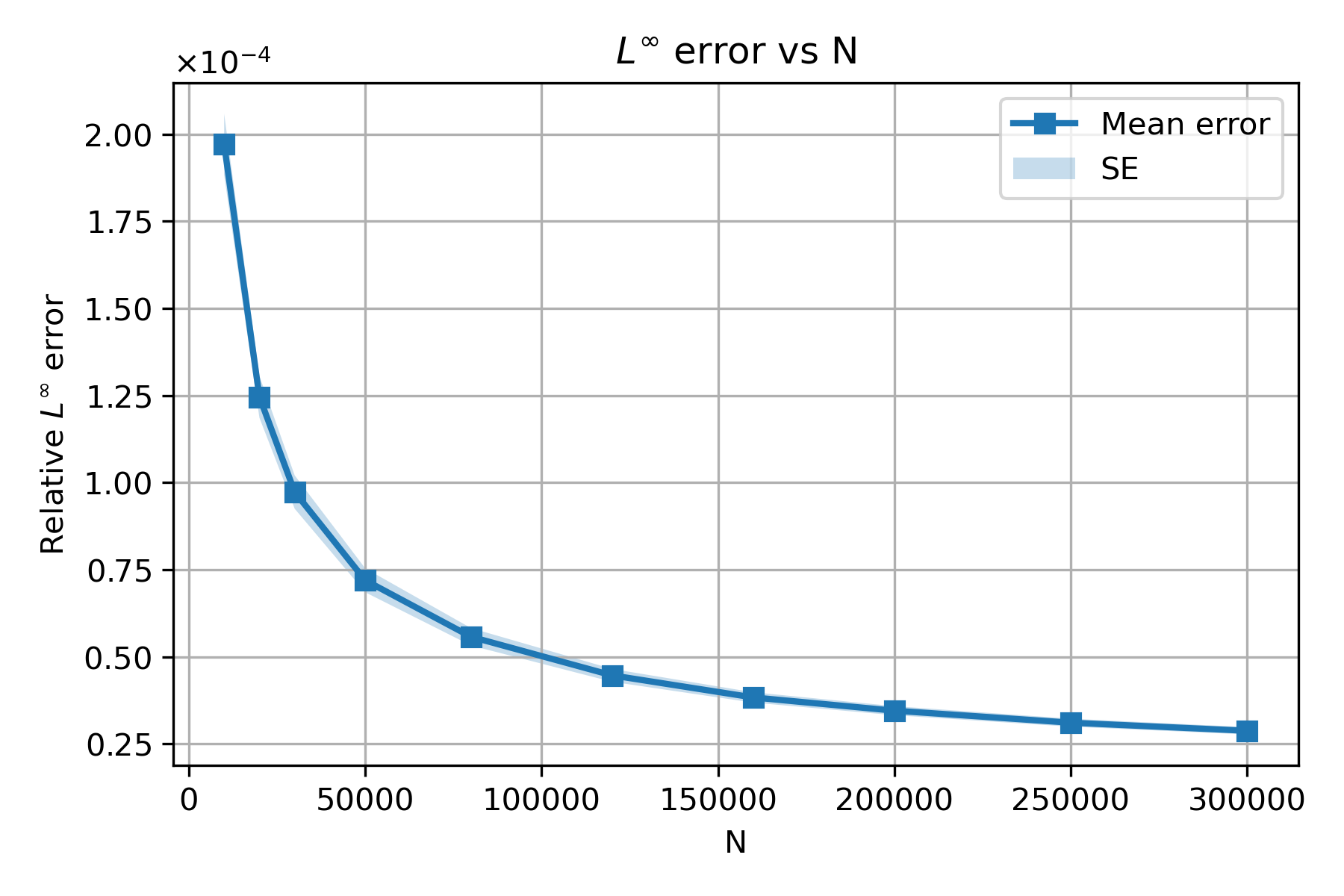}
    \vspace{-2pt}
    {\small (b) }
\end{minipage}
\vspace{6pt}

\begin{minipage}[t]{0.48\textwidth}
    \centering
    \includegraphics[width=\linewidth]{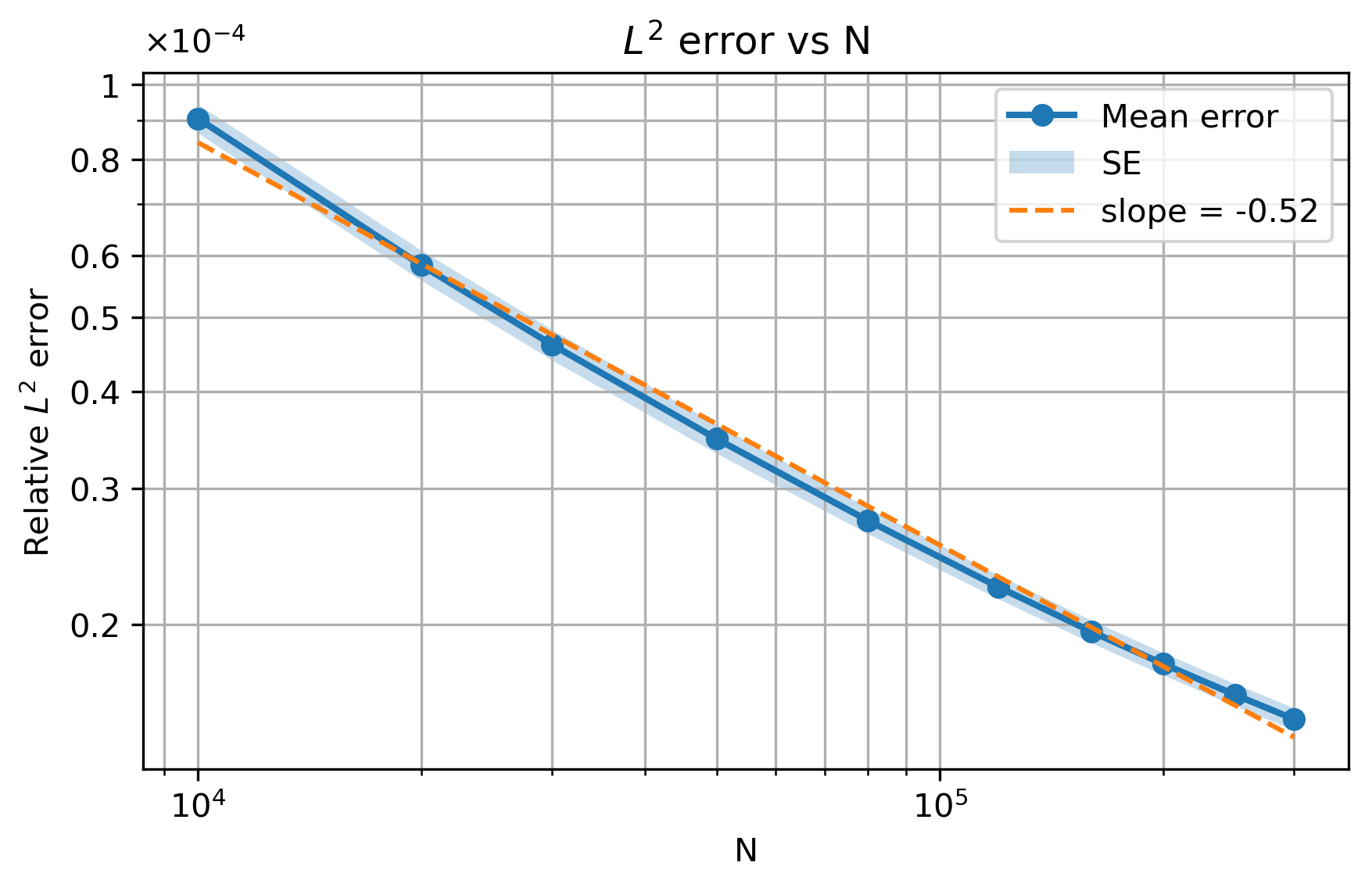}
    \vspace{-2pt}
    {\small (c) }
\end{minipage}
\hfill
\begin{minipage}[t]{0.48\textwidth}
    \centering
    \includegraphics[width=\linewidth]{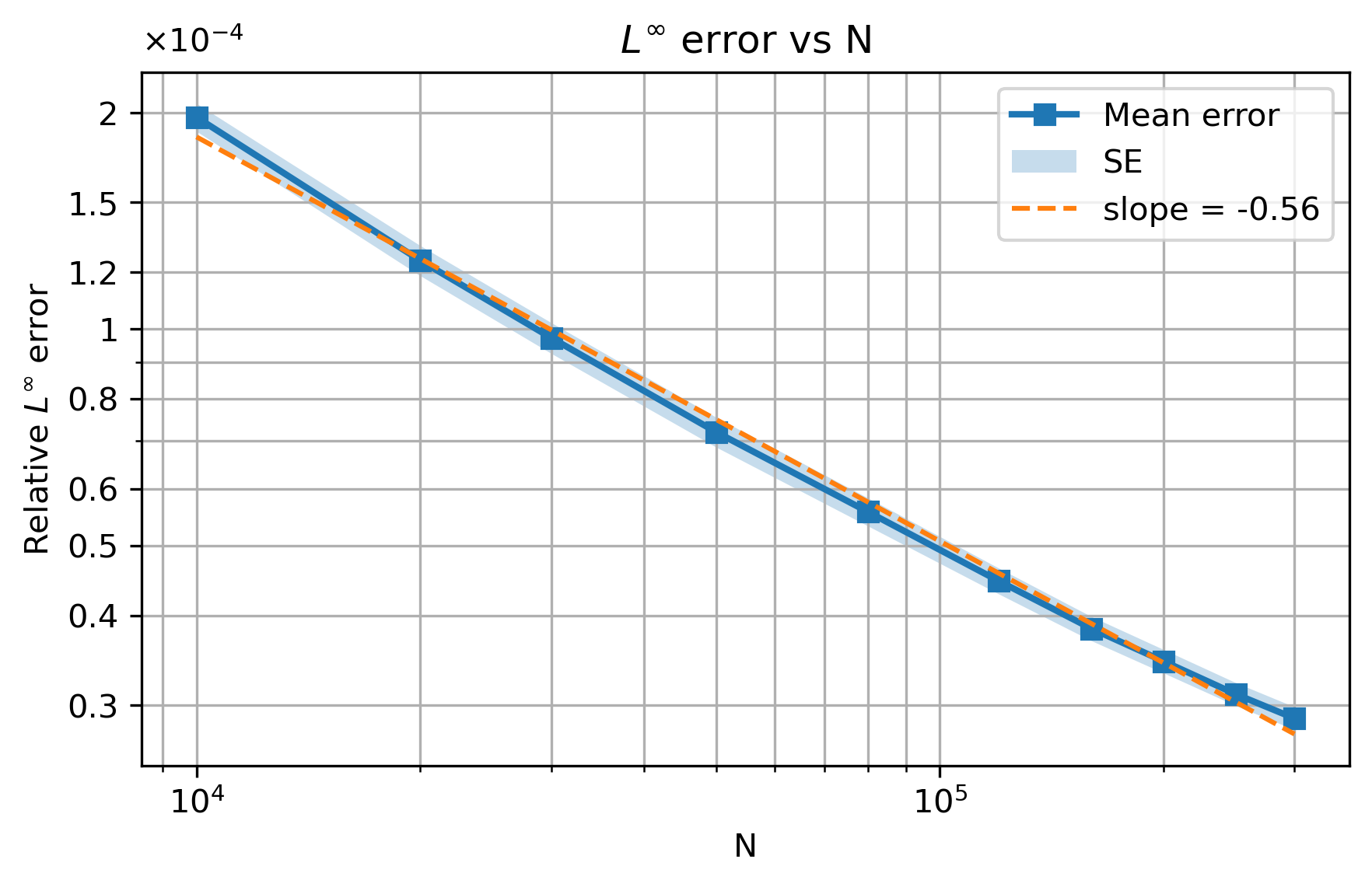}
    \vspace{-2pt}
    {\small (d) }
\end{minipage}

\caption{Data-adaptive $\lambda=10^{-7}*N^{-0.4}$.
(a,b) Mean-SE convergence of the relative $L^2$ and $L^\infty$ errors versus sample size $N$ (b) Mean-SE convergence of the relative $L^2$ and $L^\infty$ errors in the log-scale.}
\end{figure}

\bibliographystyle{siamplain}
\bibliography{references}